\def\set@curr@file#1{\def\@curr@file{#1}} %temp workaround for 2019 latex release
\newcommand{\eps}{\varepsilon}
\begin{document}

\title{Learning Partial Differential Equations in\\
Reproducing Kernel Hilbert Spaces}

\author{\name George Stepaniants \email gstepan@mit.edu\\
        \addr Department of Mathematics\\
        Massachusetts Institute of Technology\\
        77 Massachusetts Ave, Cambridge, MA 02139}

\editor{Jean-Philippe Vert}

\maketitle

\begin{abstract}%   <- trailing '%' for backward compatibility of .sty file
We propose a new data-driven approach for learning the fundamental solutions (Green's functions) of various linear partial differential equations (PDEs) given sample pairs of input-output functions. Building off the theory of functional linear regression (FLR), we estimate the best-fit Green's function and bias term of the fundamental solution in a reproducing kernel Hilbert space (RKHS) which allows us to regularize their smoothness and impose various structural constraints. We derive a general representer theorem for operator RKHSs to approximate the original infinite-dimensional regression problem by a finite-dimensional one, reducing the search space to a parametric class of Green's functions. In order to study the prediction error of our Green's function estimator, we extend prior results on FLR with scalar outputs to the case with functional outputs. Finally, we demonstrate our method on several linear PDEs including the Poisson, Helmholtz, Schr\"{o}dinger, Fokker--Planck, and heat equation. We highlight its robustness to noise as well as its ability to generalize to new data with varying degrees of smoothness and mesh discretization without any additional training.
\end{abstract}

\begin{keywords}
    Green's functions, partial differential equations, reproducing kernel Hilbert spaces, functional linear regression, simultaneous diagonalization
\end{keywords}

\section{Introduction}

The rapid development of data-driven scientific discovery holds the promise of new and faster methods to analyze, understand, and predict various complex phenomena whose physical laws are still beyond our grasp. Of central interest to this development is the ability to solve efficiently a broad range of differential equations, and more precisely partial differential equations (PDEs) which still largely require advanced numerical techniques tailored for specific problems. 

In this paper, we study what is certainly one of the most inspiring outcomes of this program: solving PDEs from input-output data. Let $u(x,t)$ denote the state at time $t$ and location $x$ of a system evolving according to a PDE such as the dynamics of a swarm of particles or the propagation of a wave through a complex medium. The goal of this paper is to predict $u(x,t)$ under initial conditions $u(x,0)$, under boundary conditions $u(x, t) = b(x, t)$ for $x$ on the boundary of a domain, or under an external forcing $f(x,t)$ that represents the ambient conditions of an evolving system. In other words, we propose to learn an operator that maps these inputs $\{u(x,0), b(x, t), f(x,t)\}$ to output solutions $u(x,t)$ from input-output data. While such operators can be, and often are, nonlinear, we focus on linear operators in this paper. Not only is this a natural first step for this program but linear approximations to nonlinear phenomena are often useful and are, in general, more robust to model misspecification.

The study of learning input-output maps where the inputs or the outputs (or both) are functions traditionally falls under the umbrella of \textit{functional data analysis} introduced in the seminal paper of~\citet{ramsay1991} with much of the theory and practical applications to physical, biological, and economic data reviewed in the monographs by~\citet{ramsay2004functional,ramsay2007applied}.

As a concrete driving example, consider a system whose state $u$ is the solution of a PDE on a compact domain $D \subset \mathbb{R}^d$,
\begin{equation}\label{eq:pde_ex}
\begin{split}
    \cP u &= f \ \text{on} \ D\\
    \cB u &= 0 \ \ \text{on} \ \partial D\,.
\end{split}
\end{equation}
Here $\cP$ is a differential operator and $\cB$ encodes the boundary conditions. Given input $f$, and  knowledge of the operators $\cP, \cB$, solving the PDE~\eqref{eq:pde_ex} requires sophisticated numerical methods such as finite differences, spectral decompositions, or finite elements~\citep{zienkiewicz1977finite, trefethen2000spectral, leveque2007finite}. These methods can, and will, be used to create input-output pairs $\{(f_i, u_i), i=1, \ldots, n\}$ on the domain $D$ usually at some fixed level of discretization.

Building on this observation, we propose a natural goal: to learn a \textit{surrogate model} $\cT: f \mapsto u$ which takes input $f \in L^2(D_\cX)$ to output $u \in L^2(D_\cY)$. Solving this supervised learning problem is of paramount importance to unlock the potential of data-driven methods and understand real-world systems under new and unseen conditions. Surrogate modeling combines elements of numerical analysis, statistics, and machine learning to efficiently learn solution maps $\cT$ that are both physically relevant and fast to evaluate.

Two approaches have been predominantly used in the study of surrogate models. The first approach discretizes the functions $f \in \mathbb{R}^{m_y}, u \in \mathbb{R}^{m_x}$ and regresses a map (i.e. neural network) $F: \mathbb{R}^{m_y} \to \mathbb{R}^{m_x}$ on the data. This methodology has been successfully applied to surrogate modeling of flow fields, computed tomography, and porous media~\citep{guo2016convolutional, adler2017solving, zhu2018bayesian, bhatnagar2019prediction}. However, this approach is not robust to mesh-refinement which is a serious issue as it becomes data-hungry with increasing mesh sizes $m_x, m_y$~\citep{bhattacharya2020model}. Furthermore, evaluating such a model on more finely sampled input-output data requires an entire retraining of the architecture. The second predominant approach to surrogate modeling attempts to learn the solution $u$ of the PDE by parameterizing $u$ itself as a map $F_\theta: D_\cY \to \mathbb{R}$ where $\theta$ is a set of model parameters. For example, \citet{chen2021solving} optimize the solution map $F_\theta: D_\cY \to \mathbb{R}$ in a reproducing kernel Hilbert space (RKHS) which can be viewed as the maximum a posteriori estimator of a Gaussian process. Using RKHSs to estimate solutions of PDEs is a large area of research stemming from the foundational work of Fasshauer on mesh-free approximation methods~\citep[Chapter~38]{fasshauer2007meshfree} with many recent extensions to fractional PDEs and integro-differential equations~\citep{arqub2018numerical, arqub2019numerical, al2019computational, arqub2020adaptive}. A review of kernel-based numerical methods for PDEs can be found in~\citet{fornberg2015solving} and the classical text of~\citet[Chapters 5 \& 6]{saitoh2016theory} contains numerous examples of kernel methods specialized for solving forward and inverse problems for ODEs and PDEs. As an alternative to kernel methods, recent approaches have proposed to learn the solutions map $F_\theta: D_\cY \to \mathbb{R}$ of a PDE as a neural network where $\theta$ are the network weights. This idea has found applications in many applied problems such as the study of electrical impedance tomography, reaction-diffusion systems, and wave propagation~\citep{weinan2018deep, raissi2019physics, bar2019unsupervised} along with grid-independent generative modeling of images~\citep{dupont2021generative}. In general, this second approach of learning PDE solution maps $F_\theta: D_\cY \to \mathbb{R}$ is indeed independent of mesh discretization. However, its dependence on the initial conditions, boundary conditions, and forcings of the PDE are all fixed thus requiring complete retraining for a new set of parameters. Furthermore, this approach requires knowledge of the underlying PDE which is not always available.

The first results to propose surrogate models between function spaces which are independent of mesh discretization and do not rely on stringent modeling assumptions have appeared in works on neural operators~\citep{lu2019deeponet, bhattacharya2020model, li2020neural, li2020fourier} and operator-valued kernels~\citep{nelsen2020random, bao2022operator} for estimating PDE solution maps. By making little to no assumptions on the domain geometry and mesh discretization of the data, these works produced efficient surrogate models which could be applied to general nonlinear PDEs. Below we follow the same guiding principle to learn surrogate maps for a large class of linear PDEs by means of learning their Green's function. Restricting ourselves to linear systems enables us to prove rates on the prediction error of our model as in~\citet{de2021convergence, boulle2021learning} and places us in a setting where the learned surrogate models become interpretable.

A broad class of linear PDEs including the Poisson equation, wave equation, and heat equation are solved by an integral operator or \textit{fundamental solution} of the form
\begin{equation}\label{eq:fundsol}
u(y) = \cT(f)(y) = \beta(y) + \int_{D_\cX} G(x, y)f(x)\ud x
\end{equation}
where $G \in L^2(D_\cX \times D_\cY)$ is called the \textit{Green's function} and $\beta \in L^2(D_\cY)$ is a bias term that satisfies the boundary conditions of the PDE and solves the homogeneous equation $\cP u = 0$. In general, the domain $D_\cX$ of the input function $f$ can be different from the domain $D_\cY$ of the solution $u$, if for example $f$ is an initial or boundary condition of a PDE.

In recent years, a large body of work has developed fast and accurate approximations to Green's functions of linear PDEs. Simulating the solution of a linear PDE with a new input function is equivalent to integrating this input against the Green's function of the PDE as in~\eqref{eq:fundsol}. If the Green's function can be efficiently constructed from input-output pairs $(f_i, u_i)$ where $u_i \approx Gf_i$, then it can be used to solve the PDE under new forcings, initial, and boundary conditions. \textit{Matrix probing} algorithms discretize the Green's function at a set of collocation points or in a basis and reconstruct the matrix $G$ from a small number of matrix-vector products $Gf_i$. By assuming that $G$ lies in the span of prespecified basis matrices $\{B_1, \hdots, B_m\}$,~\citet{chiu2012matrix} solve a least squares problem to learn a Green's function independent of the mesh discretization. This methodology is applied to solve the Helmholtz equation with absorbing boundary conditions~\citep{belanger2015compressed} as well as linearized seismic inversion problems~\citep{demanet2012matrix}. For a large class of elliptic PDEs, the Green's function under mild regularity assumptions exhibits hierarchical low-rank structure~\citep{bebendorf2003existence}. Several matrix probing algorithms~\citep{lin2011fast, boulle2021learning, boulle2022learning} leverage this structure by evaluating the PDE with a few random forcings and use randomized SVD to efficiently learn the low-rank sub-blocks of $G$. For elliptic PDEs with symmetric Green's functions, sparse Cholesky factorization and operator-adapted wavelets~\citep{schafer2017compression, owhadi2019statistical, schafer2021sparse} can be applied to compress $G$ and $G^{-1}$ in order to expedite simulations of boundary layer problems and sparse ill-conditioned PDEs arising from computer graphics~\citep{chen2021multiscale}. We refer the readers to \citet{owhadi2015bayesian, owhadi2017multigrid, owhadi2019operator} for a comprehensive review of Bayesian numerical homogenization, operator-adapted wavelets and their application to fast multigrid and multiresolution methods for linear PDEs. Finally, recent approaches have modeled Green's functions of PDEs using autoencoders~\citep{gin2021deepgreen} and rational neural networks~\citep{boulle2022data}.

Similar to the approaches above, we are interested in learning the Green's function $G$ of a linear PDE in order to learn a surrogate model $\cT: f \mapsto u$ from the input of a PDE (e.g. forcing, initial condition, boundary condition) to its solution. We restrict ourselves to only observing input-output samples $(f_i, u_i)$ of the PDE and develop a Green's function estimator that is robust to high levels of noise in the data, a property that is crucial for learning on real data sets but is less addressed in prior work on Green's function estimation. Compared to the learning methods outlined above, we do not make assumptions on the form of the underlying PDE or its Green's function (e.g. elliptic, hyperbolic, hierarchical low-rank).

For different physical problems, the Green's function of a PDE satisfies certain sparsity, continuity, smoothness conditions, or combinations thereof. This motivates us to search for linear forward maps $\cT$ of the form given in~\eqref{eq:fundsol} where the Green's function $G$ and bias $\beta$ belong to a pair of reproducing kernel Hilbert spaces (RKHSs) $\cG, \cB$ respectively. This setting belongs to a subclass of operator RKHSs studied in~\citet{kadri2016operator} and restricting ourselves to the space of integral operators leads to more flexibility and insight about the choice of the RKHSs $\cG, \cB$ for many physical problems. 

Optimizing $\cT$ over the space of integral operators with the Green's function $G$ and bias $\beta$ in an RKHS offers four concrete advantages:
\begin{enumerate}
    \item The estimators $\widehat{G}, \widehat{\beta}$ of the Green's function and bias term are robust to significant levels of noise in the input-output samples $\{(f_i, u_i)\}_{i=1}^n$ due to the penalization of their RKHS norm.
    \item It gives an explicit closed-form (representer theorem) for the best-fit functions $\widehat{G}, \widehat{\beta}$ over data samples $\{(f_i, u_i)\}_{i=1}^n$ which is independent of the mesh discretization of the samples and, crucially, can extrapolate to new meshes.
    \item It allows us to interpret our learned model by inspecting the estimated Green's function $\widehat{G}$ which specifies the impulse response of the system.
    \item The RKHSs can be designed to enforce specific structure and symmetries in $\widehat{G}, \widehat{\beta}$ based on prior knowledge about the system.
\end{enumerate}
The rest of this paper is organized as follows. We discuss our data generating model in Section~\ref{sec:data} along with the full representer theorem for the best-fit Green's function and bias term. The implementation of our Green's function and bias term RKHS estimators are detailed in Section~\ref{sec:implementation}. In Section~\ref{sec:rates}, we extend the analysis of~\citet{yuan2010reproducing} for real outputs to the case of functional outputs and derive the corresponding error bounds for the RKHS Green's function estimator. A concrete list of examples to estimate Green's functions and bias terms of linear PDEs in different RKHSs are given in Section~\ref{sec:examples} with all proofs deferred to the appendix.

\subsection{Motivating Example}
All approaches for learning Green's function, including our proposed method, begin by taking a set of input-output functions $\{(f_i(x), u_i(y))\}_{i=1}^n$ discretized on a finite set of grid points $\{x_j\}_{j=1}^{m_x}$ and $\{y_k\}_{k=1}^{m_y}$ respectively. The goal is to learn a function $G(x, y)$ such that numerical integrations of $G$ against the discretized inputs $f_i$ are as close as possible to the discretized outputs $u_i$. This can be written as
\begin{equation}\label{eq:gf_regression}
    u(y_k) \approx \sum_{j=1}^{m_x}G(x_j, y_k)f_i(x_j)\Delta_j^x
\end{equation}
where $\Delta_j^x$ are numerical quadrature weights.

The classical grid-based methods outlined above for Green's function estimation including matrix probing and sparse factorization have focused almost exclusively on the noiseless setting when data collected from the underlying PDE can be perfectly measured. These methods aim to learn $G$ solely on the grid points $G(x_j, y_k)$ and do not enforce smoothness by constraining the values of $G$ at neighboring grid points to be close. They are then able to fully exploit the linearity of~\eqref{eq:gf_regression} to obtain remarkably efficient and accurate algorithms on noiseless data. In practice however, measured data from real-world systems are often corrupted with high levels of noise which make these prior approaches inapplicable. On a moderate number of input-output samples, these classical grid based approaches tend to overfit noisy data regardless of the number of grid points $\{x_j\}_{j=1}^{m_x}$, $\{y_k\}_{k=1}^{m_y}$ used to construct the estimator.

Here we show how estimating the Green's function of a PDE in an RKHS allows us to fit~\eqref{eq:gf_regression} while also penalizing the RKHS norm of our estimator. The additional RKHS norm leads to a convex objective for $G$ and naturally penalizes the smoothness of the learned Green's function allowing for significant robustness to noise (see Sections~\ref{sec:data}~\&~\ref{sec:implementation}).

In Figure~\ref{fig:poisson_interp}, we compare our approach to a classical grid-based method for learning the Green's function of the Poisson equation. Taking the Poisson equation
$\Delta u = f$ on $[0, 1]$ with zero Dirichlet boundary conditions, we estimate it's Green's function from 500 functional samples $(f_i, u_i)$ discretized on a 100 point uniform grid $\{x_j = y_j = \frac{j-1}{99}\}_{j=1}^{100}$. The input forcings $f_i$ are simulated using a Karhunen--Loeve expansion (KLE) with a squared exponential kernel of lengthscale $\ell = 0.01$ and the solutions $u_i$ are generated with a standard finite difference solver and corrupted with 10\% Gaussian noise (see Appendix~\ref{app:dataexp} for details). The true analytic Green's function of the Poisson equation is given by
\begin{equation}\label{eq:poisson_greens_function}
    G_\text{Poisson}(x, y) = \frac{1}{2}(x + y - |x - y|) - xy
\end{equation}
depicted in the rightmost plot. With noise corrupted data, naively learning the Green's function $G$ as a discretized matrix $G_{jk} = G(x_j, x_k)$ by solving the least squares problem $\sum_{i=1}^{500} \|\mathbf{G}^Tf_i - u_i\|_2^2$ leads to a nonsmooth estimator that is corrupted by the noise in the train samples (left plot). Instead, by learning $G(x, y)$ as a \textit{function} in a squared exponential RKHS (e.g. sum of 2D Gaussian kernels of width $\sigma = 5 \times 10^{-2}$) we can penalize the smoothness of $G$ to learn a much more faithful estimate of the true Green's function (center plot). Our learned estimator is smooth and has an analytic closed form which can be reevaluated on finer mesh sizes. As opposed to the matrix estimator (left plot), the RKHS estimator (center plot) is much more interpretable as it allows us to conclude that perturbations $f$ concentrated around a point $x_0 \in [0, 1]$ produce a smoothed response in $u$ around that same point and that such perturbations get weaker as $x_0$ approaches the boundary of the domain. This example demonstrates the importance of learning Green's functions of PDEs in function spaces that enforce structure such as continuity and smoothness.

\begin{figure}[h]
\centering
\includegraphics[width=\textwidth]{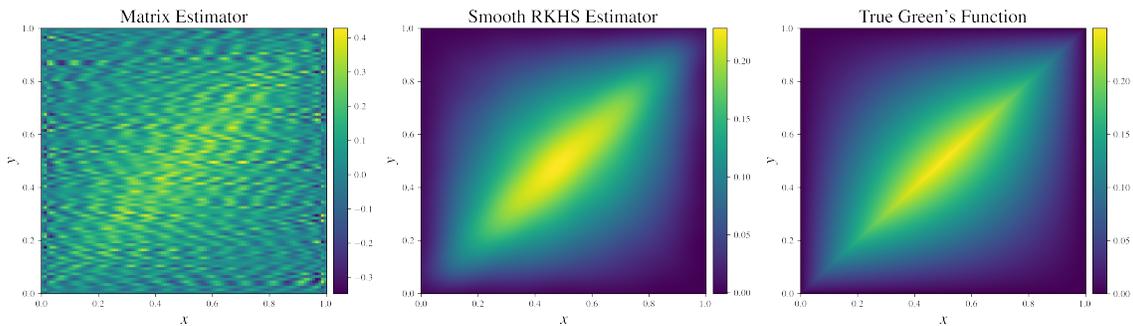}
\caption{Estimating Green's function of the Poisson equation with zero Dirichlet boundary conditions on 500 noisy functional samples $(f_i, u_i)$ discretized on a 100 point uniform grid. Learning a Green's function as a $100 \times 100$ matrix without enforcing smoothness leads to a noise corrupted estimator that overfits the training samples. Alternatively, learning the Green's function as a sum of 2D Gaussian kernels (e.g. in a squared exponential RKHS) and penalizing its smoothness with regularization $\lambda = 10^{-4}$ faithfully estimates the true Green's function. This estimator has a known functional form which allows us to resample it on a finer mesh size of $500 \times 500$ (center plot). The true Green's function of the Poisson equation is shown in the rightmost plot.}\label{fig:poisson_interp}
\end{figure}

\subsection{Definitions and Notation}
Before we describe our data model and estimator, we introduce mathematical definitions and notation that will be used throughout the paper. 

\subsubsection{Function Spaces}
\begin{definition}[Real Hilbert Space]
    A real Hilbert space $\cH$ is a real inner product space with inner product $\inprod{x, y}_\cH$ for all $x, y \in \cH$. The inner product induces a norm given by $\|x\|_\cH = \inprod{x, x}_\cH$ such that the Hilbert space $\cH$ is a complete metric space with respect to the metric $d(x, y) = \|x - y\|_\cH$.
\end{definition}

The space $L^2(D)$ denotes the Hilbert space of square-integrable functions on the domain $D$ with the standard inner product $\inprod{f, g}_{L^2(D)} = \int_D f(x)g(x) \ud x$ and norm $\|f\|_{L^2(D)}^2 = \int_D f(x)^2 \ud x$ for all $f, g \in L^2(D)$. Likewise, the Euclidean space $\mathbb{R}^d$ equipped with the inner product $\inprod{u, v}_2 = \sum_{i=1}^d u_iv_i$ is a simple example of a Hilbert space.

\begin{definition}[Reproducing Kernel Hilbert Space]
    A reproducing kernel Hilbert space (RKHS) $\cH$ is a Hilbert space of functions on a domain $D$ with an inner product $\inprod{f, g}_\cH$ for all $f, g \in \cH$. For each $x \in D$, the Hilbert space $\cH$ has a unique element $K_x \in \cH$ such that
    \begin{equation}
        \inprod{f, K_x}_\cH = f(x) \quad \forall f \in \cH
    \end{equation}
    which is called the \underline{reproducing property}. The function $K: D \times D \to \mathbb{R}$ defined as $K(x, y) = \inprod{K_x, K_y}$ for all $x, y \in D$ is called the \underline{reproducing kernel}. Often we will denote the Hilbert space norm by $\|\cdot\|_\cH = \|\cdot\|_K$ and the inner product by $\inprod{\cdot, \cdot}_\cH = \inprod{\cdot, \cdot}_K$.
\end{definition}

\subsubsection{Kernel Functions}
Given a set or domain $D$, a real-valued kernel is a function $K: D \times D \to \mathbb{R}$ that is symmetric which means $K(x, y) = K(y, x)$ for all $x, y \in D$. Furthermore, a kernel is called \textit{positive semidefinite} if
\begin{equation}\label{eq:psd_kernel}
    \sum_{i=1}^n\sum_{j=1}^n c_ic_jK(x_i, x_j) \geq 0
\end{equation}
for all $x_1, \hdots, x_n \in D$ for any integer $n \geq 1$ and $c_1, \hdots, c_n \in \mathbb{R}$. A kernel is called $\textit{positive definite}$ if the inequality in~\eqref{eq:psd_kernel} is strict.

For a domain $D \subseteq \mathbb{R}^d$, we say that $K$ is a \textit{Mercer kernel} if it is a continuous function on $D \times D$ that is symmetric and positive semidefinite.

\subsubsection{Probability and Expectation}
\begin{itemize}
\item For a distribution $\mathbb{P}$ over functions in $L^2(D)$, the notation $F \sim \mathbb{P}$ denotes that $F$ is a functional sample from this distribution.

\item The expectation with respect to a distribution $\mathbb{P}$ is denoted by $\mathbb{E}_\mathbb{P}[\cdot]$ and the subscript is dropped for convenience when it is clear which distribution is being used.
\end{itemize}

\subsubsection{Functions and Operators}
\begin{itemize}
\item Given two functions $f(x)$ and $g(y)$ we define their tensor product as $(f \otimes g)(x, y) = f(x)g(y)$.

\item Given two domains $D_\cX$ and $D_\cY$ we can define a function $M \in L^2(D_\cX \times D_\cY)$. For any $f \in L^2(D_\cY)$ we use the shorthand $M(f) = \int_{D_\cY} M(x, y)f(y) \ud y$ and similarly for any $f \in L^2(D_\cX)$ we write $M^T(f) = \int_{D_\cX} M(x, y)f(x) \ud x$.

\item An operator $\Gamma: L^2(D) \to L^2(D)$ for some domain $D$ is called positive semidefinite if $\inprod{\Gamma(f), f}_{L^2(D))} \geq 0$ for all $f \in L^2(D)$. If this inequality is always strictly larger than zero, then it is called positive definite. For two operators $\Sigma: L^2(D) \to L^2(D)$ and $\Gamma: L^2(D) \to L^2(D)$, we use the notation $\Sigma \preceq \Gamma$ when the difference $\Gamma - \Sigma$ is a positive semidefinite operator and the notation $\Sigma \prec \Gamma$ when their difference is strictly positive definite.
\end{itemize}

\subsubsection{Sets and Sequences}
\begin{itemize}
    \item For a positive integer $m$, we use the notation $[m]$ to denote the set of integers from 1 to $m$.

    \item We use $\mathbb{R}$ to denote the set of real numbers and $\mathbb{R}_+$ to denote the set of nonnegative real numbers.

    \item For two positive real sequences $a_n, b_n \in \mathbb{R}_+$ for $n \geq 1$ we use the symbol $a_n \asymp b_n$ to denote that the ratio $a_n/b_n$ is bounded away from zero and infinity as $n \to \infty$.
    
    \item We write $a_n \lesssim b_n$ to signify that $a_n \leq Cb_n$ for all $n \geq 1$ for some real constant $C > 0$.
\end{itemize}

\section{Model and Estimator}\label{sec:data}
Given compact domains $D_\cX \subset \mathbb{R}^{d_\cX}$ and $D_\cY \subset \mathbb{R}^{d_\cY}$, our goal is to learn a map from input functions $f: D_\cX \to \mathbb{R}$ to output solutions $u: D_\cY \to \mathbb{R}$. We note that $D_\cX$ and $D_\cY$ can be different domains such as when $f$ is the boundary condition of a PDE and $u$ is the solution on the interior of the domain.

We consider the model for a random input-output pair $(F, U) \in L^2(D_\cX) \times L^2(D_\cY)$:
\begin{equation}\label{eq:model}
    U = \cT^*(F) + \varepsilon
\end{equation}
where  $\cT^*: L^2(D_\cX) \to L^2(D_\cY)$ is a possibly nonlinear operator (parameter of interest) and $\eps$ is a centered random variable in $L^2(D_\cY)$. We assume further that $F\sim \subG(\Gamma_F)$ and $\eps\sim \subG(\Gamma_\eps)$ are subgaussian where $\Gamma_F: L^2(D_\cX) \to L^2(D_\cX)$ and $\Gamma_\eps: L^2(D_\cY) \to L^2(D_\cY)$ are positive semidefinite trace-class linear operators known as \textit{covariance proxies}; see Appendix~\ref{app:subG}. The \textit{covariance operator} of $F$ is the function $\Sigma_F \in L^2(D_\cX \times D_\cX)$ given by
\begin{equation}
    \Sigma_F(x, x') = \mathbb{E}[(F(x) - \mathbb{E}[F(x)]) \cdot (F(x') - \mathbb{E}[F(x')])]
\end{equation}
where the expectation is taken over the randomness of the process $F$. The covariance can also be interpreted as a linear operator $\Sigma_F: L^2(D_\cX) \to L^2(D_\cX)$ and as shorthand we will write $\Sigma_F = \mathbb{E}[(F - \mathbb{E}[F]) \otimes (F - \mathbb{E}[F])]$ where $\otimes$ is the tensor product. For our theoretical analysis, we make the assumption that $F$ is strictly subgaussian as defined in Appendix~\ref{app:subG}.
\begin{assumption}\label{assump:strictsubg}
    The subgaussian process $F \sim \subG(\Gamma_F)$ is strictly subgaussian meaning that $\Gamma_F \preceq C\Sigma_F$ for some constant $C > 0$. Additionally, we assume that $\Gamma_F$ is strictly positive definite. In other words, the covariance function and covariance proxy of $F$ are both strictly positive definite and on the same order
    \begin{equation}
        0 \prec c\Sigma_F \preceq \Gamma_F \preceq C\Sigma_F
    \end{equation}
    for some $0 < c < C$.
\end{assumption}
Modeling the input functions $F$ by a subgaussian distribution includes as a subset all inputs which can be constructed from random subgaussian-weighted combinations of basis functions with sufficient decay in their weights (see Karhunen--Loeve expansion in Appendix~\ref{app:data_gen}). Such random functions are used extensively as initial conditions, boundary conditions, and forcing functions for learning PDEs~\citep{bhattacharya2020model, boulle2021learning}.

To learn an operator for the solution map of a linear PDE, the affine representation~\eqref{eq:fundsol} suggests to consider operators  of the form
\begin{equation}\label{eq:linop}
    \cT_{\beta, G}(f) = \beta + \int_{D_{\cX}} G(x,\cdot )f(x) \ud x\,, \qquad  \beta \in L^2(D_\cY),\ G \in L^2(D_\cX \times D_\cY)
\end{equation}
This simple representation is still too flexible to be learned from a finite amount of data. To overcome this limitation, we impose additional regularity on $G$ and $\beta$, namely that $G \in \cG$ and $\beta \in \cB$, where $\cG$ and $\cB$ are two RKHSs with continuous, square integrable, and strictly positive definite reproducing kernels
\begin{equation}
K: (D_\cX \times D_\cY) \times (D_\cX \times D_\cY) \to \mathbb{R} \ \text{and} \ Q: D_\cY \times D_\cY \to \mathbb{R}.
\end{equation}

In fact we establish below an oracle inequality that holds for a much more general class of estimators for $\cT^*$. In particular, it shows that our estimator performs well not only when $\cT^*$ is affine as in~\eqref{eq:linop} (well-specified model) but also if it is well approximated by such estimators (mis-specified case); see Theorem~\ref{thm:oracle} below. In general, our results hold under the following sublinear growth condition.

\begin{assumption}\label{assump:linear_growth}
    The true solution map $\cT^*: L^2(D_\cX) \to L^2(D_\cY)$ of the PDE has at most linear growth, that is, for all $f \in L^2(D_\cX)$,
    \begin{equation}
        \|\cT^*(f)\|_{L^2(D_\cY)} \leq c + M\|f\|_{L^2(D_\cX)}
    \end{equation}
    where $c, M \geq 0$ are constants.
\end{assumption}
If we set $c = 0$ then Assumption~\ref{assump:linear_growth} is equivalent to requiring that $\cT^*$ be a bounded operator. As an example, a general class of elliptic PDEs on compact domains have bounded solution maps $\cT^*$ by the bounded inverse theorem~\citep[Section~6.2, Theorem~6]{evans1998partial}. The condition above is also clearly satisfied by all linear PDEs that have a square integrable Green's function.

\medskip

We are now in a position to describe our estimator. Assume that we observe independent samples $(F_1, U_1), \ldots, (F_n, U_n)$ of $(F,U)$ from~\eqref{eq:model} and define the empirical risk
\begin{equation}\label{eq:emprisk}
\widehat{R}(\beta, G) := \frac{1}{n}\sum_{i=1}^n\Big\|U_i - \beta - \int_{D_\cX} G(x, \cdot)F_i(x)\ud x\Big\|_{L^2(D_\cY)}^2.
\end{equation}
Likewise, define the penalized empirical risk
\begin{equation}\label{eq:penregempriskbias}
    \widehat{R}_{\rho, \lambda}(\beta, G) := \widehat{R}(\beta, G) + \rho P(\beta) + \lambda J(G)
\end{equation}
where $P$ and $J$ are the penalty functionals for the Green's function and bias term respectively. Then our estimators are defined as the RKHS minimizers
\begin{equation}
    \widehat{\beta}_{n, \rho, \lambda}, \widehat{G}_{n, \rho, \lambda} := \argmin_{\beta \in \cB, G \in \cG}\widehat{R}_{\rho, \lambda}(\beta, G)
\end{equation}
Here, the subscripts $n, \rho, \lambda$ on our estimators indicate the number of samples $n$ and the regularization values $\rho, \lambda$ for which our estimators were optimally chosen. We remind the reader that $\cG$ is the RKHS of Green's functions with reproducing kernel $K: (D_\cX \times D_\cY)^2 \to \mathbb{R}$ and $\cB$ is the RKHS of bias terms with reproducing kernel $Q: D_\cY \times D_\cY \to \mathbb{R}$ where both reproducing kernels are continuous, symmetric, and strictly positive definite. The penalty functionals with which we will regularize the Green's function and bias term are the respective RKHS norms
\begin{equation}
    J(G) = \|G\|_\cG^2 = \|G\|_K^2, \quad P(\beta) = \|\beta\|_\cB^2 = \|\beta\|_Q^2
\end{equation}
which make the penalized empirical risk $\widehat{R}_{\rho, \lambda}(\beta, G)$ a strictly convex objective such that our estimators $\widehat{\beta}_{n, \rho, \lambda}, \widehat{G}_{n, \rho, \lambda}$ are unique.

\subsection{Common Examples of RKHS Kernels}\label{subsec:kernel_examples}
By the classical Moore-Aronzajn theorem~\citep{aronszajn1950theory}, any positive-semidefinite kernel defines a unique RKHS. Hence, for nonzero measure sets $D_\cX \subset \mathbb{R}^{d_\cX}, D_\cY \subset \mathbb{R}^{d_\cY}$ the \textit{squared-exponential (SE) kernels}
\begin{equation}
    K(x, y, \xi, \eta) = \exp\Big(-\frac{\|x-\xi\|^2}{2\sigma_x^2}\Big)\exp\Big(-\frac{\|y-\eta\|^2}{2\sigma_y^2}\Big), \quad Q(y, \eta) = \exp\Big(-\frac{\|y - \eta\|^2}{2\sigma_y^2}\Big)
\end{equation}
generate unique RKHSs $\cG$ and $\cB$ for the Green's function and bias term respectively. The RKHS norms defined by such SE kernels strongly penalize derivatives of a function and hence bias the choice of $G$ and $\beta$ towards very smooth functions. In the example above, the kernel $K(x, y, \xi, \eta)$ is constructed as a products of simpler SE kernels in $(x, \xi)$ and $(y, \eta)$. Constructing kernels through tensor products is a standard procedure outlined in~\citet[Theorem~2.20]{saitoh2016theory} and is an important tool for building RKHSs of higher-dimensional functions.

Another popular example are RKHSs generated by \textit{exponential kernels}
\begin{equation}
    K(x, y, \xi, \eta) = \exp\Big(-\sqrt{\frac{\|x-\xi\|^2}{\sigma_x^2} + \frac{\|y-\eta\|^2}{\sigma_y^2}}\Big), \quad Q(y, \eta) = \exp\Big(-\frac{\|y - \eta\|}{\sigma_y}\Big)
\end{equation}
whose RKHS norms do not penalize any derivatives and allow us to represent functions $G$ and $\beta$ which are nondifferentiable.

An important family of kernel functions known as \textit{Mat\'{e}rn kernels}~\citep{genton2001classes} are given by
\begin{equation}\label{eq:matern}
\begin{gathered}
    C_\nu(d) = \frac{1}{\Gamma(\nu)2^{\nu-1}}\Big(\frac{\sqrt{2\nu}}{l}d\Big)^\nu K_\nu\Big(\frac{\sqrt{2\nu}}{l}d\Big)\\
    K(x, y, \xi, \eta) = C_\nu\Big(\sqrt{\frac{\|x-\xi\|^2}{\sigma_x^2} + \frac{\|y-\eta\|^2}{\sigma_y^2}}\Big), \quad Q(y, \eta) = C_\nu\Big(\frac{\|y - \eta\|}{\sigma_y}\Big)
\end{gathered}
\end{equation}
and interpolate between the exponential kernel at $\nu = 0$ and the Gaussian kernel as $\nu \to \infty$. The parameter $\nu$ controls how strongly the magnitudes of higher-order derivatives of $G$ and $\beta$ are penalized.

%Lastly, a kernel which is useful for linear interpolation and appears frequently in finite element methods for PDEs is the tent function
%\begin{equation}
%\begin{gathered}
%    T(d) = \mathbf{1}_{\{|d| < 1\}}\Big(1 - |d|\Big)\\
%    K(x, y, \xi, \eta) = T\Big(\sqrt{\frac{\|x-\xi\|^2}{\sigma_x^2} + \frac{\|y-\eta\|^2}{\sigma_y^2}}\Big), \quad Q(y, \eta) = T\Big(\frac{\|y - \eta\|}{\sigma_y}\Big)
%\end{gathered}
%\end{equation}
%which belongs to a class of compactly supported kernels known as Wendland functions~\citep{argaez2017wendland}.
 
The positive definite kernel functions described above are all examples of \textit{radial basis functions} (RBFs) or anisotropic variants of RBFs; functions that only depend on the distances $\|x - \xi\|$ and $\|y - \eta\|$. In general, reproducing kernels are not restricted to be of this form. For example, given any finite or infinite set of orthonormal functions $\{\psi_k\}_{k=1}^m$ on $L^2(D)$ we have that
\begin{equation}
    K(x, y) = \sum_{k=1}^m \lambda_k\psi_k(x)\psi_k(y)
\end{equation}
with $\lambda_k > 0$ and $\sum_{k=1}^m \lambda_k^2 < \infty$ defines an RKHS of functions on the domain $D$ with inner product
\begin{equation}
    \inprod{f, g} = \sum_{k=1}^m \frac{\inprod{f, \psi_k}_{L^2(D)}\inprod{g, \psi_k}_{L^2(D)}}{\lambda_k}.
\end{equation}
for all $f, g$ in this RKHS. In particular, a basis of $L^2(D)$ such as a Fourier or polynomial basis truncated to a finite number of terms is an example of an RKHS. As a concrete example, on the box domain $D = [0, 1]^d$ the kernel
\begin{equation}
    K(x_1, \hdots, x_d, \xi_1, \hdots, \xi_d) = 2^d\sum_{k_1, \hdots, k_d=1}^\infty \frac{\sin(\pi k_1x_1) \hdots \sin(\pi k_dx_d)\sin(\pi k_1\xi_1) \hdots \sin(\pi k_d\xi_d)}{\pi^2(k_1^2 + \hdots + k_d^2)}.
\end{equation}
is a reproducing kernel for the Sobolev-Hilbert space
\begin{equation}
\begin{split}
    W_1^2(D) = \Big\{f:f \ \text{absolutely continuous}, \ f \equiv 0 \ \text{on} \ \partial D, \ \frac{\partial f}{\partial x_i} \in L^2(D), \ \forall 1 \leq i \leq d\Big\}
\end{split}
\end{equation}
with inner product
\begin{equation}
    \inprod{f, g}_{W_1^2(D)} = \int_D \nabla f(x) \cdot \nabla g(x)\ud x.
\end{equation}
This can be seen by noting that $K$ is the Green's function of the Poisson equation
\begin{equation}\label{eq:poisson_highdim}
    -\Delta u(x) = s(x) \quad \forall x \in D, \qquad u(x) = 0 \quad \forall x \in \partial D
\end{equation}
with homogeneous Dirichlet boundary conditions~\cite[Section~8.2.2-16]{polyanin2015handbook}.

In this paper, all experiments described in Section~\ref{sec:examples} only use the exponential, squared exponential, and Mat\'{e}rn kernels as they are used ubiquitously in the kernel methods literature, are simple to implement, and can be computed efficiently through fast kernel matrix-vector products (see Section~\ref{sec:implementation}). These kernels are strictly positive definite although our analysis can also be extended to degenerate kernels which are positive semidefinite.

\begin{remark}
    We refer the reader to the classical texts of~\cite[Chapters~1, 2, 10]{wahba1990spline},~\citet[Chapters~1, 7]{berlinet2011reproducing} and~\citet[Chapter~1]{saitoh2016theory} for detailed examples of reproducing kernels and their associated Hilbert spaces. In particular, these texts outline the deep connection between Green's functions of differential equations and reproducing kernels. As shown on the example of the Poisson equation in~\eqref{eq:poisson_highdim}, Green's functions of classical PDEs can be seen as natural reproducing kernels over the space of their solutions. Similar reproducing kernels can be derived from Green's functions of the Helmholtz and heat equations~\citep[Sections~1.7.2-1.7.3]{saitoh2016theory}. In this paper, we take the opposite perspective and use a reproducing kernel to learn a Green's function of an unknown PDE from data. Our choice of reproducing kernel for the RKHS gives rise to a best-fit estimator for the Green's function of a PDE.
\end{remark}

\subsection{Representer Theorem}\label{sec:repthm}
Given random input-output function samples $\{(F_i, U_i)\}_{i=1}^n$ from~\eqref{eq:model}, we would like to minimize the regularized cost function $\widehat{R}_{\rho, \lambda}$ defined in~\eqref{eq:penregempriskbias}. Our cost function is composed of a convex mean-squared error $\widehat{R}(\beta, G)$ given in~\eqref{eq:emprisk} as well as two strictly convex RKHS regularizers $J(G) = \|G\|_\cG^2$ and $P(\beta) = \|\beta\|_\cB^2$. Hence, it is strictly convex implying that it has a unique minimizer
\begin{equation}\label{eq:estimators}
    \widehat{\beta}_{n, \rho, \lambda}, \widehat{G}_{n, \rho, \lambda} := \argmin_{\beta \in \cB, G \in \cG} \widehat{R}_{\rho, \lambda}(\beta, G).
\end{equation}
which are the estimators for the Green's function and bias term of our PDE.

In practice, the functional inputs $F_i(x)$ and ouputs $U_i(y)$ are given to us at discretized mesh points $\{x_j\}_{j=1}^{m_x}$ and $\{y_k\}_{k=1}^{m_y}$. Given discretized data, our original objective function
\begin{equation}\label{eq:full_continuous_cost}
    \widehat{R}_{\rho, \lambda}(\beta, G) = \frac{1}{n}\sum_{i=1}^n\Big\|U_i - \beta - \int_{D_\cX} G(x, \cdot)F_i(x)\ud x\Big\|_{L^2(D_\cY)}^2 + \rho\|\beta\|_Q^2 + \lambda\|G\|_K^2
\end{equation}
is numerically approximated by a Riemann sum
\begin{equation}\label{eq:riemannsum}
    \frac{1}{n}\sum_{i=1}^n\sum_{k=1}^{m_y}\Big(U_i(y_k) - \beta(y_k) - \sum_{j=1}^{m_x}G(x_j, y_k)F_i(x_j)\Delta_j^x\Big)^2\Delta_k^y + \rho\|\beta\|_Q^2 + \lambda\|G\|_K^2
\end{equation}
where $\Delta_j^x$ and $\Delta_k^y$ are the quadrature weights for the Riemann sums in $x$ and $y$ respectively. Importantly, here we discretize the square loss term using a Riemann sum but the Hilbert norms $\|\beta\|_Q^2, \|G\|_K^2$ are kept continuous. The semi-discrete objective function above is now in the right form for us to apply the traditional representer theorem to $\beta$ and $G$. First we see that any minimizer $\beta$ must have the form
\begin{equation}\label{eq:biasrepresenter}
    \widehat{\beta}_{n, \rho, \lambda}(y) = \sum_{k=1}^{m_y}Q(y, y_k)w_k \Delta_k^y
\end{equation}
where $\mathbf{w} = (w_1, \hdots, w_{m_y})^T \in \mathbb{R}^{m_y}$ is any weight vector and $Q$ is once again the reproducing kernel for the RKHS $\cB$ of the bias term. Now fixing $\beta$, our discretized loss function~\eqref{eq:riemannsum} in $G$ has the form
\begin{equation}\label{eq:restrictedloss}
    L\Big(\Big\{\Big\langle{\mathbf{G}, \mathbf{A}_{ik} \Big\rangle_2}\Big\}_{i \in [n], k \in [m_y]}\Big) + \lambda\|G\|_K^2, \qquad \mathbf{G} = \{G(x_j, y_k)\}, \ \ \mathbf{A}_{ik} = (\mathbf{F}_i \odot \boldsymbol{\Delta}^x)\mathbf{e}_k^T \in \mathbb{R}^{m_x \times m_y}
\end{equation}
for some loss function $L: \mathbb{R}^n \to \mathbb{R}$ where $\mathbf{F}_i = (F_i(x_1), \hdots, F_i(x_{m_x})^T \in \mathbb{R}^{m_x}$ and $\boldsymbol{\Delta}^x = (\Delta_1^x, \hdots, \Delta_{m_x}^x)^T \in \mathbb{R}^{m_x}$ and $\mathbf{e}_k \in \mathbb{R}^{m_y}$ denotes the unit vector with a one in the $k$th position. Here $\odot$ denotes the element-wise product and $\langle\mathbf{G}, \mathbf{A}_{ik} \rangle_2$ is the matrix trace inner product. The loss function as written above is a function of the $m_x \times m_y$ function evaluations $\{G(x_j, y_k)\}$ plus a regularization term so it is directly amenable to the classical representer theorem in $G$. Hence, any minimizer $G$ must have the form
\begin{equation}\label{eq:discreterepresenter_simple}
    \widehat{G}_{n, \rho, \lambda}(x, y) = \sum_{j=1}^{m_x}\sum_{k=1}^{m_y} K(x, y, x_j, y_k)W_{jk}
\end{equation}
where $\mathbf{W} = \{W_{jk}\} \in \mathbb{R}^{m_x \times m_y}$ is any weight matrix and $K$ is the reproducing kernel for the RKHS $\cG$ of the Green's function. Surprisingly, the particular form of our loss function $L(\{\langle{\mathbf{G}, \mathbf{A}_{ik} \rangle}\}_{i \in [n], k \in [m_y]})$ allows us to give a more constrained description of $G$. Since the function evaluations $\mathbf{G} = \{G(x_j, y_k)\}$ only enter our loss function through inner products with $\{\mathbf{A}_{ik}\}$, we can in fact show that
\begin{equation}\label{eq:discreterepresenter}
    \widehat{G}_{n, \rho, \lambda}(x, y) = \sum_{j=1}^{m_x}\sum_{k=1}^{m_y} K(x, y, x_j, y_k)W_{jk}, \quad \mathbf{W} \in \spn\{\mathbf{A}_{ik}: i \in [n], k \in [m_y]\}.
\end{equation}
A concise proof of this statement is detailed at the start of Appendix~\ref{app:repthm}. If we expand
\begin{equation}
    \mathbf{W} = \sum_{i=1}^n\sum_{k=1}^{m_y} \mathbf{A}_{ik}c_{ik}\Delta_k^y
\end{equation}
for any constants $c_{ik} \in \mathbb{R}$ then we can finally write
\begin{equation}
    \widehat{G}_{n, \rho, \lambda}(x, y) = \sum_{j=1}^{m_x}\sum_{k=1}^{m_y} K(x, y, x_j, y_k)F_i(x_j)c_{ik}\Delta_j^x\Delta_k^y.
\end{equation}
Hence we have derived a representer theorem for our Green's function $G$ that minimizes the discretized loss in~\eqref{eq:riemannsum}. Building on the derivations in~\citet[Section~1.3, Theorem~1.3.1]{wahba1990spline}, we also present a continuous version of this result when the mesh discretizations $\{x_j\}_{j=1}^{m_x}$ and $\{y_k\}_{k=1}^{m_y}$ are taken to the continuum limit, i.e. minimizing $\widehat{R}_{\rho, \lambda}(\beta, G)$ from~\eqref{eq:full_continuous_cost} directly without a Riemann sum approximation.

\begin{theorem}[Green's Function Representer Theorem]\label{thm:representer}
For any minimizer $\widehat{\beta}_{n, \rho, \lambda}, \widehat{G}_{n, \rho, \lambda}$ of the empirical risk $\widehat{R}_{\rho, \lambda}(\beta, G)$ from~\eqref{eq:full_continuous_cost} on functional data $\{(F_i, U_i)\}_{i=1}^n \subset L^2(D_\cX) \times L^2(D_\cY)$, the function $\widehat{G}_{n, \rho, \lambda}$ must have the form
\begin{equation}\label{eq:fullrepresenter}
    \widehat{G}_{n, \rho, \lambda}(x, y) = \sum_{i=1}^n \int_{D_\cX}\int_{D_\cY} K(x, y, \xi, \eta)F_i(\xi)c_i(\eta)\ud\xi \ud\eta
\end{equation}
where $c_i \in L^2(D_\cY)$ for $i \in [n]$ are coefficient functions which are free to be determined.
\end{theorem}
The theorem above can be seen as a consequence of the general representer theorem derived in the seminal work of~\citet[Section~4, Theorem~4.1]{micchelli2005learning} which laid the groundwork for vector-valued RKHSs, sometimes called operator RKHSs. We give an alternative proof of this result in Appendix~\ref{app:repthm} at the same level of generality for any loss function optimized over the space of an operator RKHS (i.e. not just integral operators). A similar representer theorem is also proven in~\citet[Appendix~B, Theorem~9]{kadri2016operator} albeit with restrictive assumptions that the loss function and regularization term are quadratic.

In the following sections, we describe the numerical implementation of our Green's function and bias term estimators and derive error bounds for approximating the Green's function $G$ of a PDE in an RKHS $\cG$ given functional data samples $(F_i, U_i)$.

\section{Implementation}\label{sec:implementation}
In practice, functional input-output data $\{(F_i, U_i)\}_{i=1}^n$ are almost always discretized on a set of mesh points $\{x_j\}_{j=1}^{m_x}, \{y_k\}_{k=1}^{m_y}$ so our implementation of the Green's function and bias term estimators are based on the discrete representer theorems from equations~\eqref{eq:biasrepresenter} and~\eqref{eq:discreterepresenter_simple} respectively
\begin{equation}
    \beta_\mathbf{w}(y) = \sum_{k=1}^{m_y}Q(y, y_k)w_k \Delta_k^y, \quad G_\mathbf{W}(x, y) = \sum_{j=1}^{m_x}\sum_{k=1}^{m_y} K(x, y, x_j, y_k)W_{jk}\Delta_j^x\Delta_k^y.
\end{equation}
Note that we do not use the more restricted form of the discrete representer theorem~\eqref{eq:discreterepresenter} for the Green's function $G$ as it is only efficient in the small data limit when $n \ll \min(m_x, m_y)$ but in practice we take $m_x, m_y$ on the order of $10^2$ and the number of training samples $n$ range from 100 to 500.

The full forward map $\cT: f \to u$ of the PDE from~\eqref{eq:linop} is estimated by
\begin{equation}\label{eq:discrete_model}
    [\cT_{\mathbf{w}, \mathbf{W}}(f)](y) = [\cT_{\beta_\mathbf{w}, G_\mathbf{W}}(f)](y) = \beta_\mathbf{w}(y) + \sum_{j=1}^{m_x} G_\mathbf{W}(x_j, y)f(x_j)\Delta_j^x.
\end{equation}
where $\mathbf{w}, \mathbf{W}$ are the weights of our estimator. Given that~\eqref{eq:riemannsum} is a convex objective, a natural choice is to learn $\mathbf{w}, \mathbf{W}$ through convex optimization. However, computation of the above estimators require fast evaluation of kernel matrix-vector products which are not supported by traditional Python convex optimization libraries. Instead we efficiently evaluate these summations on GPUs with the KeOps Python libraries~\citep{JMLR:v22:20-275} and obtain derivatives with respect to $\mathbf{w}, \mathbf{W}$ which seamlessly integrate with the PyTorch automatic differentiation library~\citep{NEURIPS2019_9015}. Optimization of these weights is performed by Adam with amsgrad, a popular gradient descent method, which uses gradients from previous iterations to stabilize its convergence~\citep{kingma2014adam, reddi2019convergence}. As an additional benefit, Pytorch libraries offer a \textit{parametrizations} class which allows us to easily constrain our RHKS estimators $\beta_\mathbf{w}, G_\mathbf{W}$ to satisfy various properties such as coordinate symmetries and time causality (see Section~\ref{sec:examples}).

We train the estimators $\beta_\mathbf{w}, G_\mathbf{W}$ stochastically on batches of size 100 with $n =$  100-500 training pairs $(F_i, U_i) \in \mathbb{R}^{m_y}, \mathbb{R}^{m_x}$ using between 100-1000 epochs such that the solution converges. The loss function minimized by gradient descent on the training data is
\begin{equation}\label{eq:opt_cost}
    \text{Loss}(\cT_{\beta_\mathbf{w}, G_\mathbf{W}}) = \text{MSE}(\cT_{\beta_\mathbf{w}, G_\mathbf{W}}) + \lambda P(\beta_\mathbf{w}) + \rho J(G_\mathbf{W})
\end{equation}
which is the mean squared $L^2$ error regularized by the RKHS norms of the Green's function and bias term $P(\beta) = \|\beta\|_Q$ and $J(G) = \|G\|_K$. The mean squared error above is approximated by the Riemann sum
\begin{equation}
    \text{MSE}(\cT) = \frac{1}{n}\sum_{i=1}^n \int_{D_\cY}\Big(U_i(y) - [\cT(F_i)](y)\Big)^2\ud y \approx \frac{1}{n}\sum_{i=1}^n \sum_{k=1}^{m_y}\Big(U_i(y_k) - [\cT(F_i)](y_k)\Big)^2\Delta_k^y
\end{equation}
and the regularization terms can be exactly evaluated as
\begin{equation}
\begin{gathered}
    P(\beta_\mathbf{w}) = \sum_{j=1}^{m_y}\sum_{l=1}^{m_y} w_jQ(y_j, y_l)w_l\Delta_j^y\Delta_l^y\\
    J(G_\mathbf{W}) = \sum_{i=1}^{m_x}\sum_{j=1}^{m_y}\sum_{k=1}^{m_x}\sum_{l=1}^{m_y} W_{ij}K(x_i, y_j, x_k, y_l)W_{kl}\Delta_i^x\Delta_j^y\Delta_k^x\Delta_l^y.
\end{gathered}
\end{equation}

Since the samples $(F_i, U_i)$ generated can vary significantly in magnitude, to evaluate the performance of our estimator $\cT_{\mathbf{w}, \mathbf{W}}$ we also investigate its $L^2$ mean relative error which is computed by
\begin{equation}\label{eq:forwardmap_re}
    \text{RE}(\cT) = \sqrt{\frac{1}{n}\sum_{i=1}^n \frac{\Big\|U_i - \cT(F_i)\Big\|_{L^2(D_\cY)}}{\Big\|U_i\Big\|_{L^2(D_\cY)}}} \approx \sqrt{\frac{1}{n}\sum_{i=1}^n\frac{\sum_{k=1}^{m_y}\Big(U_i(y_k) - [\cT(F_i)](y_k)\Big)^2\Delta_k^y}{\sum_{k=1}^{m_y}U_i(y_k)^2\Delta_k^y}}.
\end{equation}

Finally, given new meshes $\{\overline{x}_j\}_{j=1}^{\overline{m}_x}, \{\overline{y}_k\}_{k=1}^{\overline{m}_y}$ with quadrature weights $\overline{\Delta}_j^x, \overline{\Delta}_k^y$ we can extrapolate the predictions of $\cT_{\mathbf{w}, \mathbf{W}}: f \to u$ on these new meshes by writing
\begin{equation}
    [\cT_{\mathbf{w}, \mathbf{W}}(f)](y) = [\cT_{\beta_\mathbf{w}, G_\mathbf{W}}(f)](y) = \beta_\mathbf{w}(y) + \sum_{j=1}^{\overline{m}_x} G_\mathbf{W}(\overline{x}_j, y)f(\overline{x}_j)\overline{\Delta}_j^x.
\end{equation}
The mean and relative squared errors on these new meshes can be computed in the same way as shown above.

In Section~\ref{sec:examples} we implement the Green's function and bias term estimators described above and learn the solution maps of various space and time-varying PDEs. In all examples, the domains $D_\cX, D_\cY$ of the input and output data are rectangular domains of the form $\Pi [a_i, b_i]$ and the discretization meshes $\{x_j\}_{j=1}^{m_x}, \{y_k\}_{k=1}^{m_y}$ are equispaced with quadrature weights $\Delta_j^x, \Delta_k^y$ defined by the trapezoid rule. Our approach however can easily be extended to nonuniform grids by setting suitable quadrature weights or using Monte Carlo integration methods such as importance sampling.

In examples where we know the true Green's function and bias term, we can also compute the relative error of our estimated Green's functions and bias terms $G, \beta$ to the true functions $G_\text{true}, \beta_\text{true}$. In practice, we compute these relative errors as Riemann sums
\begin{equation}\label{eq:estimator_re}
\begin{gathered}
    \text{RE}(G) = \sqrt{\frac{\Big\|G - G_\text{true}\Big\|_{L^2(D_\cX \times D_\cY)}}{\Big\|G_\text{true}\Big\|_{L^2(D_\cX \times D_\cY)}}} \approx \sqrt{\frac{\sum_{j=1}^{\overline{m}_x}\sum_{k=1}^{\overline{m}_y}\Big(G(\overline{x}_j, \overline{y}_k) - G_\text{true}(\overline{x}_j, \overline{y}_k)\Big)^2\overline{\Delta}_j^x\overline{\Delta}_k^y}{\sum_{j=1}^{\overline{m}_x}\sum_{k=1}^{\overline{m}_y}G_\text{true}(\overline{x}_j, \overline{y}_k)^2\overline{\Delta}_j^x\overline{\Delta}_k^y}}\\
    \text{RE}(\beta) = \sqrt{\frac{\Big\|\beta - \beta_\text{true}\Big\|_{L^2(D_\cY)}}{\Big\|\beta_\text{true}\Big\|_{L^2(D_\cY)}}} \approx \sqrt{\frac{\sum_{k=1}^{\overline{m}_y}\Big(\beta(\overline{y}_k) - \beta_\text{true}(\overline{y}_k)\Big)^2\overline{\Delta}_k^y}{\sum_{k=1}^{\overline{m}_y}\beta_\text{true}(\overline{y}_k)^2\overline{\Delta}_k^y}}
\end{gathered}
\end{equation}
where the uniform meshes $\{\overline{x}_j\}_{j=1}^{\overline{m}_x}, \{\overline{y}_k\}_{k=1}^{\overline{m}_y}$ are finely discretized with mesh sizes 10 times larger (in each dimension) than the meshes $\{x_j\}_{j=1}^{m_x}, \{y_k\}_{k=1}^{m_y}$ on which our estimators $G$ and $\beta$ were trained.

\section{Error Analysis}\label{sec:rates}
In this section, we establish error bounds for the Green's function of a PDE when it is estimated in an RKHS from a finite number of samples. Throughout this section, we assume that the input-output data $\{(F_i, U_i)\}_{i=1}^n \subset L^2(D_\cX) \times L^2(D_\cY)$ are truly functional data and are not discretized on a finite mesh. Furthermore, we limit our theoretical analysis to the simpler case of only estimating the Green's function of a PDE. We assume that the bias term $\beta \in \cB$ is known and hence, can be subtracted from the observations $U$, which significantly simplifies the notation in our analysis. For the interested reader, we briefly discuss in Section~\ref{subsec:bias_term} below how our theoretical framework can be extended to the case where the bias term is also estimated.

Focusing on the Green's function estimation problem, we can define the empirical risk of our Green's function estimator as
\begin{equation}\label{eq:regemprisk}
    \widehat{R}(G) := \frac{1}{n}\sum_{i=1}^n\Big\|U_i - \int_{D_\cX} G(x, \cdot)F_i(x)\ud x\Big\|_{L^2(D_\cY)}^2.
\end{equation}
the penalized empirical risk as
\begin{equation}\label{eq:penregemprisk}
    \widehat{R}_\lambda(G) := \widehat{R}(G) + \lambda J(G)
\end{equation}
and, most importantly, the population risk as
\begin{equation}
    R(G) := \mathbb{E}\Big[\Big\|U - \int_{D_\cX} G(x, \cdot)F(x)\ud x\Big\|_{L^2(D_\cY)}^2\Big].
\end{equation}
where the expectation above is taken with respect to the randomness of $(F, U)$. This last objective, the population risk, is the quantity of interest that we study in order to bound the prediction error of our Green's function estimator. In Sections~\ref{subsec:mercer_kernels}-\ref{subsec:oracle_ineq} we show how the empirical RKHS estimator
\begin{equation}
    \widehat{G}_{n, \lambda} := \argmin_{G \in \cG}\widehat{R}_\lambda(G)
\end{equation}
compares to any oracle
\begin{equation}
    G_\cG \in \argmin_{G \in \cG}R(G)
\end{equation}
We prove an oracle inequality in Section~\ref{subsec:oracle_ineq} that controls the difference $R(\widehat{G}_{n, \lambda}) - R(G_\cG)$. For appropriately chosen regularizer $\lambda$, this prediction error tends to zero as ${n \to \infty}$. We assume throughout that $F \in L^2(D_\cX), U \in L^2(D_\cY)$ are mean zero random variables.

As shorthand, we denote $G^T \in L^2(D_\cY \times D_\cX)$ as the function $G^T(x, y) = G(y, x)$ for all $x \in D_\cX, y \in D_\cY$. We frequently use the notation $G^T(F) = \int_{D_\cX} G(x, \cdot)F(x) \ud x$ as the Green's function $G(x, y)$ integrated against $F(x)$ in its first coordinate.

\subsection{Accommodating the Bias Term}\label{subsec:bias_term}
The error analysis for $\widehat{G}_{n, \lambda}$ derived in the following sections can be generalized to include the bias term $\beta \in \cB$. In this case, we need to bound the full population risk $R(\widehat{\beta}_{n, \rho, \lambda}, \widehat{G}_{n, \rho, \lambda})$ of the RKHS estimators
\begin{equation}
    \widehat{\beta}_{n, \rho, \lambda}, \widehat{G}_{n, \rho, \lambda} := \argmin_{\beta \in \cB, G \in \cG} \widehat{R}_{\rho, \lambda}(\beta, G).
\end{equation}
Here the full population risk is defined as
\begin{equation}
    R(\beta, G) := \mathbb{E}\Big[\Big\|U - \beta - \int_{D_\cX} G(x, \cdot)F(x)\ud x\Big\|_{L^2(D_\cY)}^2\Big].
\end{equation}
Including the bias term, our full affine operator is
\begin{equation}
    \cT_{\beta, G}(f) = \beta + \int_{D_{\cX}} G(x, \cdot)f(x) \ud x.
\end{equation}
Denote $|D_\cX|$ by the Lebesgue measure of $D_\cX$. By taking our input function $f: \mathbb{R} \to \mathbb{R}$ and appending a constant to obtain $\tilde{f}: \mathbb{R} \to \mathbb{R}^2$ where $\tilde{f}(x) = [f(x), \frac{1}{|D_\cX|}]^T$ we can rewrite our affine operator as
\begin{equation}
    \cT_{\tilde{G}}(\tilde{f}) = \int_{D_{\cX}} \inprod{\tilde{G}(x, \cdot), \tilde{f}(x)}_2 \ud x, \quad \tilde{G}(x, y) = [G(x, y), \beta(y)]^T
\end{equation}
where now $\tilde{G}: D_\cX \times D_\cY \to \mathbb{R}^2$ and $\inprod{\cdot, \cdot}_2$ denotes the standard Euclidean vector inner product. Here, $\tilde{G}$ is an element of a new Cartesian product RKHS of vector-valued functions
\begin{equation}
    \tilde{\cG} := \Big\{[G(x, y), \beta(y)]^T: G \in \cG, \beta \in \cB\Big\}
\end{equation}
with inner product defined as
\begin{equation}
    \inprod{\tilde{G}, \tilde{H}}_{\tilde{\cG}} = \inprod{\beta, \gamma}_\cB + \inprod{G, H}_\cG
\end{equation}
for all $\tilde{G}(x, y) = [G(x, y), \beta(y)]^T$ and $\tilde{H}(x, y) = [H(x, y), \gamma(y)]^T$ in $\tilde{\cG}$ where $G, H \in \cG$ and $\beta, \gamma \in \cB$. As defined in~\citet[Section~2, Theorem~2.1]{micchelli2005learning}, the reproducing kernel of this RKHS of vector-valued functions $\tilde{\cG}$ is $\tilde{K}: (D_\cX \times D_\cY)^2 \to \mathbb{R}^{2 \times 2}$ given by
\begin{equation}
    \tilde{K}(x, y, \xi, \eta) = \begin{bmatrix}K(x, y, \xi, \eta) & 0\\
    0 & Q(y, \eta)\end{bmatrix}
\end{equation}
where, as before, $K: (D_\cX \times D_\cY)^2 \to \mathbb{R}$ and $Q: D_\cY^2 \to \mathbb{R}$ are the reproducing kernels of $\cG$ and $\cB$ respectively.
Using these definitions, the error analysis for Green's functions with no bias term extends to the case when the Green's function and bias term are jointly optimized. In this more general setting, proving error bounds for $\widehat{\beta}_{n, \rho, \lambda}, \widehat{G}_{n\rho\lambda}$ similarly involves studying the eigenvalues and eigenvectors of $\tilde{K}, \tilde{\Sigma}_F$ to bound the population risk $R(\widehat{\beta}_{n, \rho, \lambda}, \widehat{G}_{n\rho\lambda})$. The covariance operator of the input data $\tilde{F} = [F, \frac{1}{|D_\cX|}]^T$ now becomes a matrix-valued function $\tilde{\Sigma}_F: D_\cX^2 \to \mathbb{R}^{2 \times 2}$. Furthermore, the spectra and vector-valued eigenfunctions of $\tilde{K}$ become concatenations of the eigenvalues and eigenfunctions of the original reproducing kernels $K$ and $Q$. In this paper, we choose to avoid these additional notational complexities by studying error bounds for the Green's function only.\\

\subsection{Eigenbases of Mercer Kernels $\Sigma_F$ and $K$}\label{subsec:mercer_kernels}
To derive an oracle inequality that bounds $R(\widehat{G}_{n, \lambda}) - R(G_\cG)$, we make the following assumptions on the covariance operator $\Sigma_F = \mathbb{E}[(F - \mathbb{E}[F]) \otimes (F - \mathbb{E}[F])]$ and the reproducing kernel $K$ of $\cG$.

\begin{assumption}[Mercer Kernels]
    We assume that the covariance operator $\Sigma_F \in L^2(D_\cX \times D_\cX)$ and the reproducing kernel $K \in L^2((D_\cX \times D_\cY)^2)$ are continuous, square integrable, and positive definite. Kernels that satisfy these three conditions are called Mercer kernels. Since $\Sigma_F$ is a Mercer kernel, we know by Mercer's theorem~\cite[Section~2, Theorem~1]{cucker2002mathematical} that it has nonnegative eigenvalues $\mu_1 \geq \mu_2 \geq \dots$ and $L^2$ orthonormal eigenfunctions $\{\phi_k\}_{k=1}^\infty \subseteq L^2(D_\cX)$ with the spectral decomposition
    \begin{equation}
        \Sigma_F(x, \xi) = \sum_{k=1}^\infty\mu_k\phi_k(x)\phi_k(\xi).
    \end{equation}
    Similarly, $K$ is a Mercer kernel so it has nonnegative eigenvalues $\rho_1 \geq \rho_2 \geq \dots$ and $L^2$ orthonormal eigenfunctions $\{\Psi_k\}_{k=1}^\infty \subset L^2(D_\cX \times D_\cY)$ with the spectral decomposition
\begin{equation}\label{eq:K1_mercer}
    K(x, y, \xi, \eta) = \sum_{k=1}^\infty\rho_k\Psi_k(x, y)\Psi_k(\xi, \eta)
\end{equation}
where $\inprod{\Psi_i, \Psi_j}_{L^2(D_\cX \times D_\cY)} = \delta_{ij}$ and $\rho_i\inprod{\Psi_i, \Psi_j}_\cG = \delta_{ij}$.
\end{assumption}

The decay of the eigenvalues of the reproducing kernel $K$ play a key role in the estimation of $\widehat{G}_{n, \lambda}$. The rate at which these eigenvalues decay to zero determines the rate at which $\widehat{G}_{n, \lambda}$ converges to $G_\cG$ in terms of their prediction error.

\begin{assumption}\label{assump:spectradecay}
    The eigenvalues of the reproducing kernel $K$ satisfy $\rho_k \lesssim k^{-r}$ for some $r > \frac{1}{2}$.
\end{assumption}
In Appendix~\ref{app:eig_decay}, we provide several examples of reproducing kernels which have this rate of decay in their spectrum. For the RKHSs considered in the following sections, we focus our analysis on kernels with polynomial decay in their eigenvalues. The same proof technique in Sections~\ref{subsec:symmdiag} and~\ref{subsec:oracle_ineq} for deriving the error bounds can be applied to RKHSs whose kernels have a stricter, exponential decay in their eigenvalues such as smooth radial kernels.\\

\subsection{Simultaneous Diagonalization}\label{subsec:symmdiag}
The variance of our RKHS estimator $\widehat{G}_{n, \lambda}$ is related to the quadratic form $\inprod{(\Sigma_F \otimes I)G, G}_{L^2(D_\cX \times D_\cY)}$ where $\Sigma_F: D_\cX \to D_\cX$ is the covariance operator of the input data and $I: D_\cY \to D_\cY$ is the identity operator of the output data. Likewise, the bias of our estimator is determined by the regularization term $J(\widehat{G}_{n, \lambda}) = \|\widehat{G}_{n, \lambda}\|_{K}^2$ which implies that we need to study the spectrum of the reproducing kernel $K$ of $\cG$. These statements, formalized in Section~\ref{subsec:oracle_ineq}, suggest an approach to studying the bias-variance tradeoff of our estimator. Namely, we approach this problem by simultaneously diagonalizing the operators $\Sigma_F \otimes I$ and $K$. This allows us to write any $G \in \cG$ as a sum of basis functions where $\inprod{(\Sigma_F \otimes I)G, G}_{L^2(D_\cX \times D_\cY)}$ and $J(G) = \|G\|_{K}^2$ are expanded into series whose terms depend on the basis coefficients of $G$.\\

First, for any $G, H \in L^2(D_\cX \times D_\cY)$ define the semi-inner product
\begin{equation}
    \inprod{G, H}_{\Sigma_F} = \inprod{(\Sigma_F \otimes I)G, H}_{L^2(D_\cX \times D_\cY)} = \int_{D_\cX}\int_{D_\cX}\int_{D_\cY} G(x, y)\Sigma_F(x, z)H(z, y)\ud x \ud y \ud z
\end{equation}
Because $K$ is a Mercer kernel, its can be expanded as in~\eqref{eq:K1_mercer}, and then by~\citet[Chapter~3, Section~3]{cucker2002mathematical} we can write
\begin{equation}
    \inprod{G, F}_{\cG} = \inprod{G, F}_{K} = \sum_{k=1}^\infty \frac{\inprod{G, \Psi_k}_{L^2(D_\cX \times D_\cY)}\inprod{F, \Psi_k}_{L^2(D_\cX \times D_\cY)}}{\rho_k}.
\end{equation}

Define a sum of the two inner products
\begin{equation}
    \inprod{G, H}_{\overline{K}} = \inprod{G, H}_{\Sigma_F} + \inprod{G, H}_{K}
\end{equation}
with corresponding norm
\begin{equation}
    \|G\|_{\overline{K}}^2 = \|G\|_{\Sigma_F}^2 + \|G\|_{K}^2 = \|G\|_{\Sigma_F}^2 + J(G).
\end{equation}
Note that $\inprod{\cdot, \cdot}_{\overline{K}}$ is indeed an inner product over $\cG$ because it is clearly linear and conjugate symmetric, and it is positive definite since $\|G\|_{K}^2 \leq \|G\|_{\overline{K}}^2$.

Now we use this new norm $\|\cdot\|_{\overline{K}}$ to define a basis for $\cG$ that simultaneously diagonalizes the quadratic forms $\|G\|_{\Sigma_F}^2$ and $J(G)$. First in Appendix~\ref{app:rates} we prove the following proposition.
\begin{prop}\label{prop:equivnorms}
    The norms $\|\cdot\|_\cG$ and $\|\cdot\|_{\overline{K}}$ are equivalent. Hence, the Hilbert space $\cG$ with inner product $\inprod{\cdot, \cdot}_{\overline{K}}$ is also an RKHS.
\end{prop}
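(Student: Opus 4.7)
The plan is to establish two-sided norm equivalence $c_1\|G\|_\cG \leq \|G\|_{\overline{K}} \leq c_2\|G\|_\cG$ on $\cG$. Once this is in hand, $(\cG,\inprod{\cdot,\cdot}_{\overline{K}})$ is automatically a Hilbert space (completeness is preserved under equivalent norms), and each point-evaluation functional $G \mapsto G(x,y)$, which is $\|\cdot\|_\cG$-continuous since $\cG$ is an RKHS with kernel $K$, is automatically $\|\cdot\|_{\overline{K}}$-continuous. So the RKHS conclusion comes for free, and all the work is in the norm equivalence.

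For the upper bound I would treat the two pieces of $\|G\|_{\overline{K}}^2 = \|G\|_{\Sigma_F}^2 + J(G)$ separately. The bound $J(G) = \|\proj_{\cG_1}G\|_{K_1}^2 \leq \|G\|_\cG^2$ is immediate from the orthogonal decomposition $\cG = \cG_0 \oplus \cG_1$. For the covariance piece,
\[
\|G\|_{\Sigma_F}^2 = \inprod{(\Sigma_F \otimes I)G, G}_{L^2(D_\cX \times D_\cY)} \leq \|\Sigma_F\|_{L^2\to L^2}\,\|G\|_{L^2(D_\cX\times D_\cY)}^2,
\]
and since $K$ is continuous on the compact set $(D_\cX \times D_\cY)^2$, the reproducing property yields the pointwise bound $|G(x,y)| \leq \sqrt{K(x,y,x,y)}\,\|G\|_\cG$, giving a continuous embedding $\cG \hookrightarrow L^2(D_\cX \times D_\cY)$ and therefore $\|G\|_{L^2} \lesssim \|G\|_\cG$. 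In particular, this calculation shows that $\|\cdot\|_{\Sigma_F}$ is continuous with respect to $\|\cdot\|_\cG$, a fact I reuse below.

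The substantive step, which I expect to be the main obstacle, is the lower bound $\|G\|_\cG \lesssim \|G\|_{\overline{K}}$. I would argue by contradiction: if it fails, one can extract a sequence $\{G_n\} \subset \cG$ with $\|G_n\|_\cG = 1$ and $\|G_n\|_{\overline{K}} \to 0$, so $J(G_n) \to 0$ and $\|G_n\|_{\Sigma_F} \to 0$ simultaneously. Writing $G_n = G_n^0 + G_n^1$ along $\cG = \cG_0 \oplus \cG_1$, the identity $\|G_n\|_\cG^2 = \|G_n^0\|_\cG^2 + \|G_n^1\|_{K_1}^2 = 1$ combined with $\|G_n^1\|_{K_1}^2 = J(G_n) \to 0$ forces $\|G_n^0\|_\cG \to 1$. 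Since $\cG_0$ is finite-dimensional, a subsequence of $\{G_n^0\}$ converges in $\cG$ to some $G_0^\star \in \cG_0$ with $\|G_0^\star\|_\cG = 1$, and hence $G_n \to G_0^\star$ in $\cG$ along that subsequence. The continuity of $\|\cdot\|_{\Sigma_F}$ with respect to $\|\cdot\|_\cG$ (established in the upper bound step) then gives $\|G_0^\star\|_{\Sigma_F} = \lim_n \|G_n\|_{\Sigma_F} = 0$, contradicting Assumption~\ref{assump:psd} since $G_0^\star \neq 0$.

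The crux is that Assumption~\ref{assump:psd} prevents any nonzero limiting direction in the null space $\cG_0$ of $J$ from escaping into the kernel of $\Sigma_F \otimes I$; without both the finite-dimensionality of $\cG_0$ (to get a convergent subsequence from a bounded one) and the nondegeneracy of $\|\cdot\|_{\Sigma_F}$ on $\cG$ (to close the contradiction), the argument would break down. No quantitative control on the constants $c_1, c_2$ is needed for this proposition, so I would not attempt to track them.
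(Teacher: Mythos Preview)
Your argument is correct. The upper bound and the RKHS conclusion match the paper's proof essentially verbatim. For the lower bound, however, you take a genuinely different route: a soft compactness--contradiction argument exploiting the finite-dimensionality of $\cG_0$ and the $\|\cdot\|_\cG$-continuity of $\|\cdot\|_{\Sigma_F}$. The paper instead works quantitatively: it introduces the Gram matrix $\Sigma_{ij}=\inprod{E_i,E_j}_{\Sigma_F}$ on $\cG_0$, uses its smallest eigenvalue $\mu_r'>0$ (positive by Assumption~\ref{assump:psd}) to get $\|G_0\|_{\Sigma_F}^2\ge\mu_r'\|G_0\|_\cG^2$, and then handles the cross term between $G_0$ and $G_1$ via an explicit completion-of-the-square inequality involving $\rho_1$ and $\mu_1$, arriving at constants like $\big(1+\tfrac{1}{\mu_r'(1+\rho_1\mu_1)}\big)$. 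Your approach is shorter and more conceptual; the paper's approach buys explicit constants, which, as you correctly note, are not needed for the proposition itself and are not used downstream.
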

Let us denote $\overline{K}: (D_\cX \times D_\cY)^2 \to \mathbb{R}$ as the reproducing kernel associated with $\|\cdot\|_{\overline{K}}$. The kernel $\overline{K}$ can be viewed as a positive definite operator over the space $\cG$. Denoting the eigenvalues and eigenfunctions of $\overline{K}$ by $\{(\rho_k', \Psi_k')\}_{k=1}^\infty$ we can interpret $\overline{K}: L^2(D_\cX \times D_\cY) \to L^2(D_\cX \times D_\cY)$ as a positive operator defined as $\overline{K}(\Psi_k') := \int_{D_\cX}\int_{D_\cY}\overline{K}(\cdot, \cdot, \xi, \eta)\Psi_k'(\xi, \eta) \ud\xi\ud\eta = \rho_k'\Psi_k'$. Now we can define the square root of this positive operator as $\overline{K}^{\frac{1}{2}}: L^2(D_\cX \times D_\cY) \to L^2(D_\cX \times D_\cY)$ which satisfies $\overline{K}^{\frac{1}{2}}(\Psi_k') = (\rho_k')^{\frac{1}{2}}\Psi_k'$.

To summarize, we have defined above two norms $\|\cdot\|_{\Sigma_F}^2$ and $\|\cdot\|_{\overline{K}}^2$ where the former is the quadratic form of $\Sigma_F \otimes I$ and the latter is roughly the quadratic form of the inverse kernel $\overline{K}^{-1}$. For notational convenience, we have chosen to drop the tensor product with the identity and the kernel inverse from the norm subscripts.

Following the ideas of~\citet[Section~7.6, Theorem~7.6.4]{horn2012matrix} we simultaneously diagonalize the quadratic forms $\|\cdot\|_{\Sigma_F}^2$ and $\|\cdot\|_{\overline{K}}^2$. Defining the linear operator $\overline{K}^\frac{1}{2}(\Sigma_F \otimes I)\overline{K}^\frac{1}{2}$, denote its eigenvalues and $L^2$ orthonormal eigenfunctions by $\nu_1 \geq \nu_2 \geq \dots$ and $\{\Gamma_k\}_{k=1}^\infty \subset \cG$ respectively. Note that $\overline{K}$ is positive definite over $\cG$ and so is $\Sigma_F \otimes I$ by Assumption~\ref{assump:strictsubg} which implies that $\{\Gamma_k\}_{k=1}^\infty$ spans $\cG$ and that $\nu_k > 0$ for all $k \geq 1$. In fact, we have that
\begin{equation}
    \nu_k = \|\overline{K}^\frac{1}{2}\Gamma_k\|_{\Sigma_F}^2 \leq \|\overline{K}^\frac{1}{2}\Gamma_k\|_{\overline{K}}^2 = \|\Gamma_k\|_{L^2(D_\cX \times D_\cY)}^2 = 1.
\end{equation}
Now we define the basis functions
\begin{equation}
    \Omega_k = \nu_k^{-\frac{1}{2}}\overline{K}^\frac{1}{2}\Gamma_k \in \cG, \quad k \geq 1.
\end{equation}
Then using the induced inner product $\inprod{\cdot, \cdot}_{\overline{K}}$ we can write out
\begin{equation}
    \inprod{\Omega_i, \Omega_j}_{\overline{K}} = \inprod{\nu_i^{-\frac{1}{2}}\overline{K}^\frac{1}{2}\Gamma_i, \nu_j^{-\frac{1}{2}}\overline{K}^\frac{1}{2}\Gamma_j}_{\overline{K}} = \inprod{\nu_i^{-\frac{1}{2}}\Gamma_i, \nu_j^{-\frac{1}{2}}\Gamma_j}_{L^2(D_\cX \times D_\cY)} = \nu_i^{-1}\delta_{ij}
\end{equation}
and similarly
\begin{equation}
\begin{split}
    \inprod{(\Sigma_F \otimes I)\Omega_i, \Omega_j}_{L^2(D_\cX \times D_\cY)} &= \inprod{\nu_i^{-\frac{1}{2}}(\Sigma_F \otimes I)\overline{K}^\frac{1}{2}\Gamma_i, \nu_j^{-\frac{1}{2}}\overline{K}^\frac{1}{2}\Gamma_j}_{L^2(D_\cX \times D_\cY)}\\
    &= \nu_i^{-\frac{1}{2}}\nu_j^{-\frac{1}{2}}\inprod{\overline{K}^\frac{1}{2}(\Sigma_F \otimes I)\overline{K}^\frac{1}{2}\Gamma_i, \Gamma_j}_{L^2(D_\cX \times D_\cY)} = \delta_{ij}.
\end{split}
\end{equation}
We emphasize that the basis $\{\Omega_k\}_{k=1}^\infty$ defined above is not an orthogonal basis in $L^2(D_\cX \times D_\cY)$ but it does form an orthogonal basis for $\cG$ as we shall show next. Furthermore, we can simultaneously diagonalize the quadratic forms $\|\cdot\|_{\Sigma_F}^2$ and $\|\cdot\|_{\overline{K}}^2$ on the basis $\{\Omega_k\}_{k=1}^\infty$.
\begin{theorem}\label{thm:simdiag}
For any $G \in \cG$,
\begin{equation}
    G = \sum_{k=1}^\infty g_k\Omega_k
\end{equation}
which converges absolutely and where $g_k = \nu_k\inprod{G, \Omega_k}_{\overline{K}}$. Furthermore, setting $\gamma_k = (\nu_k^{-1} - 1)^{-1}$ then we can write
\begin{equation}
    \|G\|_{\overline{K}}^2 = \sum_{k=1}^\infty \nu_k^{-1}g_k^2 = \sum_{k=1}^\infty (1 + \gamma_k^{-1})g_k^2, \qquad \|G\|_{\Sigma_F}^2 = \sum_{k=1}^\infty g_k^2
\end{equation}
and similarly
\begin{equation}
    J(G) = \|G\|_{\overline{K}}^2 - \|G\|_{\Sigma_F}^2 = \sum_{k=1}^\infty \gamma_k^{-1}g_k^2.
\end{equation}
\end{theorem}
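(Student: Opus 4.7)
The plan is to recognize $\{\sqrt{\nu_k}\,\Omega_k\}_{k\geq 1}$ as a complete orthonormal basis of the Hilbert space $(\mathcal{G}, \inprod{\cdot,\cdot}_{\overline{K}})$, which is a Hilbert space by Proposition~\ref{prop:equivnorms}, and then read off the three identities from the standard Parseval expansion together with the two biorthogonality relations
\begin{align*}
    \inprod{\Omega_i,\Omega_j}_{\overline{K}} = \nu_i^{-1}\delta_{ij}, \qquad \inprod{(\Sigma_F\otimes I)\Omega_i,\Omega_j}_{L^2(D_\cX\times D_\cY)} = \delta_{ij},
\end{align*}
already computed in the discussion preceding the theorem.

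For orthonormality, rescaling gives $\inprod{\sqrt{\nu_i}\,\Omega_i,\sqrt{\nu_j}\,\Omega_j}_{\overline{K}} = \delta_{ij}$. For completeness, Assumption~\ref{assump:psd} together with positive definiteness of $\overline{K}$ ensures $\nu_k>0$ for all $k$, and the text records that $\{\Gamma_k\}$ is complete in $\mathcal{G}$. Since $\sqrt{\nu_k}\,\Omega_k = \overline{K}^{1/2}\Gamma_k$ and $\overline{K}^{1/2}$ is an isometric isomorphism from its natural $L^2$ domain onto $(\mathcal{G},\overline{K})$, this completeness transfers. Concretely, any $G \in \mathcal{G}$ that is $\overline{K}$-orthogonal to every $\Omega_k$ may be written $G = \overline{K}^{1/2}H$ and then $\inprod{H,\Gamma_k}_{L^2} = 0$ for all $k$, forcing $H=0$ and hence $G=0$.

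Once $\{\sqrt{\nu_k}\,\Omega_k\}$ is a complete orthonormal basis, the standard Fourier expansion gives
\begin{align*}
    G = \sum_{k=1}^\infty \inprod{G,\sqrt{\nu_k}\,\Omega_k}_{\overline{K}}\sqrt{\nu_k}\,\Omega_k = \sum_{k=1}^\infty \nu_k\inprod{G,\Omega_k}_{\overline{K}}\Omega_k = \sum_{k=1}^\infty g_k\Omega_k,
\end{align*}
with convergence in the $\overline{K}$-norm (and hence the $\mathcal{G}$-norm). Because the $\Omega_k$ are mutually $\overline{K}$-orthogonal and the coefficients are square-summable in the appropriate weighted sense (verified by Parseval below), the series converges unconditionally, which is what ``absolutely'' refers to here. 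Parseval then yields
\begin{align*}
    \|G\|_{\overline{K}}^2 = \sum_{k=1}^\infty \bigl|\inprod{G,\sqrt{\nu_k}\,\Omega_k}_{\overline{K}}\bigr|^2 = \sum_{k=1}^\infty \nu_k^{-1}g_k^2,
\end{align*}
and the identity $\nu_k^{-1} = 1 + \gamma_k^{-1}$ rewrites this as $\sum_k (1+\gamma_k^{-1})g_k^2$. Plugging the expansion into $\|G\|_{\Sigma_F}^2 = \inprod{(\Sigma_F\otimes I)G,G}_{L^2(D_\cX\times D_\cY)}$ and applying the second biorthogonality relation gives $\|G\|_{\Sigma_F}^2 = \sum_{i,j}g_ig_j\delta_{ij} = \sum_k g_k^2$; the interchange of summation with the bilinear form is justified by $\|\cdot\|_{\Sigma_F}^2 \leq \|\cdot\|_{\overline{K}}^2$, which makes $(\Sigma_F\otimes I)$ continuous on $(\mathcal{G},\overline{K})$. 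Subtracting gives $J(G) = \|G\|_{\overline{K}}^2 - \|G\|_{\Sigma_F}^2 = \sum_k (\nu_k^{-1}-1)g_k^2 = \sum_k \gamma_k^{-1}g_k^2$.

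The main obstacle is the completeness argument: the eigenbasis of $\overline{K}^{1/2}(\Sigma_F\otimes I)\overline{K}^{1/2}$ is \emph{a priori} only total in the closure of the operator's range, so strict positivity (no zero eigenvalues, equivalent to Assumption~\ref{assump:psd}) is essential to guarantee that $\{\sqrt{\nu_k}\,\Omega_k\}$ spans all of $(\mathcal{G},\overline{K})$; without it, the Parseval step degrades to inequalities. Once completeness is in hand the remaining computations are routine Hilbert-space manipulations, with the only technical care being the continuity of $(\Sigma_F\otimes I)$ in the $\overline{K}$-norm needed to legitimize the interchange of sums with the $\Sigma_F$-bilinear form.
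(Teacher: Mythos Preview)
Your proposal is correct and takes essentially the same approach as the paper. The paper expands $\overline{K}^{-1/2}G$ in the $L^2$-orthonormal basis $\{\Gamma_k\}$ and then applies $\overline{K}^{1/2}$, which is exactly your isometry argument phrased as a direct computation; your framing via the orthonormal basis $\{\sqrt{\nu_k}\,\Omega_k\}=\{\overline{K}^{1/2}\Gamma_k\}$ of $(\mathcal{G},\inprod{\cdot,\cdot}_{\overline{K}})$ plus Parseval is a cleaner packaging of the same mechanism.
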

We do not necessarily assume that the $\gamma_k$ coefficients are ordered in decreasing order since the series in Theorem~\ref{thm:simdiag} converge absolutely. To acquire some intuition for this simultaneous diagonalization we discuss it in a useful setting where the operators $K$ and $\Sigma_F \otimes I$ commute.

\begin{prop}\label{prop:simdiagsimple}
Recall that the eigenvalues and eigenbasis of $\Sigma_F$ are $\mu_i, \phi_i \in L^2(D_\cX)$ for $i \geq 1$. Assume that the eigenbasis for $K$ is $\{\Psi_{ij} := \phi_i \otimes \varphi_j\}_{k = 1}^\infty \subseteq \cG$ with eigenvalues $\{\rho_{ij}\}_{i, j=1}^\infty$ where $\{\varphi_j\}_{j=1}^\infty$ is any orthonormal basis of $L^2(D_\cY)$. Then we know that $\{\gamma_{ij} := \mu_i\rho_{ij}\}_{i, j=1}^\infty$ and $\{\Omega_{ij} := \mu_i^{-\frac{1}{2}}\Psi_{ij}\}_{i, j=1}^\infty$ are the coefficients and basis functions given in Theorem~\ref{thm:simdiag}.\\

Sort the eigenvalues $\rho_{ij}$ of $K$ in decreasing order as $\rho_1 \geq \rho_2 \geq \hdots$ and assume that $\rho_k \lesssim k^{-r}$ for some $r > \frac{1}{2}$ as in Assumption~\ref{assump:spectradecay}. If we enumerate the coefficients $\gamma_k$ and basis functions $\Omega_k$ in the same order as the decreasing $\rho_k$, then it holds that $\gamma_k \lesssim \rho_k \lesssim k^{-r}$.
\end{prop}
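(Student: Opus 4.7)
The plan is to verify directly that the proposed basis $\{\Omega_{ij} = \mu_i^{-1/2}\Psi_{ij}\}$ satisfies the two orthogonality relations characterizing the basis $\{\Omega_k\}$ in Theorem~\ref{thm:simdiag}, read off the coefficients $\nu_{ij}$ and $\gamma_{ij}$ by matching, and then bound the sorted sequence using the boundedness of the spectrum of $\Sigma_F$. The simplification afforded by the commutativity is that we never need to construct the abstract eigenfunctions $\Gamma_k$ of $\overline{K}^{1/2}(\Sigma_F \otimes I)\overline{K}^{1/2}$: the tensor-product form $\Psi_{ij} = \phi_i \otimes \varphi_j$ already diagonalizes both $\Sigma_F \otimes I$ and $K_1$ simultaneously.

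First I would compute the two relevant inner products on the candidate basis. Since $\Sigma_F \phi_i = \mu_i \phi_i$ we have $(\Sigma_F \otimes I)\Psi_{ij} = \mu_i \Psi_{ij}$, hence
\[
    \inprod{(\Sigma_F \otimes I)\Omega_{ij}, \Omega_{kl}}_{L^2(D_\cX \times D_\cY)} = \mu_i^{-1/2}\mu_k^{-1/2}\mu_i\inprod{\Psi_{ij}, \Psi_{kl}}_{L^2} = \delta_{ik}\delta_{jl}.
\]
Because $\Psi_{ij} \in \cG_1$ we also have $\proj_{\cG_1}\Omega_{ij} = \Omega_{ij}$, and by the Mercer expansion~\eqref{eq:K1_mercer} one has $\inprod{\Psi_{ij}, \Psi_{kl}}_{K_1} = \rho_{ij}^{-1}\delta_{ik}\delta_{jl}$, so
\[
    \inprod{\Omega_{ij}, \Omega_{kl}}_{\overline{K}} = \delta_{ik}\delta_{jl} + \mu_i^{-1}\rho_{ij}^{-1}\delta_{ik}\delta_{jl} = \frac{1 + \mu_i\rho_{ij}}{\mu_i\rho_{ij}}\delta_{ik}\delta_{jl}.
\]
Matching the last expression with $\nu_{ij}^{-1}\delta_{ik}\delta_{jl}$ yields $\nu_{ij} = \mu_i\rho_{ij}/(1 + \mu_i\rho_{ij}) \in (0,1)$ and consequently $\gamma_{ij} = (\nu_{ij}^{-1} - 1)^{-1} = \mu_i\rho_{ij}$, as claimed.

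Next I would justify that any basis with these two orthogonality properties may be taken as the simultaneous-diagonalization basis in Theorem~\ref{thm:simdiag}. The key observation is that the expansion $G = \sum_{ij} g_{ij}\Omega_{ij}$ with $g_{ij} = \nu_{ij}\inprod{G, \Omega_{ij}}_{\overline{K}}$, together with the two diagonal series for $\|G\|_{\overline{K}}^2$ and $\|G\|_{\Sigma_F}^2$, follows formally from the two orthogonality identities above regardless of how the basis was obtained; completeness over $\cG_1$ is inherited from the fact that $\{\Psi_{ij}\}$ is a complete eigensystem of the compact operator $K_1$.

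Finally, for the polynomial rate, enumerate the pairs so that $\rho_1 \geq \rho_2 \geq \cdots$ and write $\gamma_k = \mu_{i(k)}\rho_k$, where $i(k)$ is the $\phi$-index of the $k$-th largest eigenvalue of $K_1$. Since $\Sigma_F$ is a Mercer kernel, its eigenvalues are uniformly bounded by $\mu_1 < \infty$, giving $\gamma_k \leq \mu_1\rho_k \lesssim k^{-r}$. The main subtlety beyond bookkeeping is the non-uniqueness of the simultaneous-diagonalization basis when eigenvalues coincide; this is resolved by the observation above that the formulas of Theorem~\ref{thm:simdiag} depend only on the orthogonality relations, not on the specific spectral construction through $\overline{K}^{1/2}(\Sigma_F \otimes I)\overline{K}^{1/2}$.
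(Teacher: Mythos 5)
Your proposal is correct, and the computations at its core are the same ones the paper uses: $(\Sigma_F\otimes I)\Psi_{ij}=\mu_i\Psi_{ij}$, the $\overline K$-Gram matrix of the tensor basis being diagonal with entries $\mu_i+\rho_{ij}^{-1}$, hence $\nu_{ij}=(1+\mu_i^{-1}\rho_{ij}^{-1})^{-1}$, $\gamma_{ij}=\mu_i\rho_{ij}$, and finally $\gamma_k\le\mu_1\rho_k\lesssim k^{-r}$ using boundedness of $\Sigma_F$. The organizational difference is that the paper works forward through the construction in Section~\ref{subsec:symmdiag}: it first shows $\overline K(\phi_i\otimes\varphi_j)=(\mu_i+\rho_{ij}^{-1})^{-1}\phi_i\otimes\varphi_j$, then diagonalizes $\overline K^{1/2}(\Sigma_F\otimes I)\overline K^{1/2}$ explicitly to get $\Gamma_{ij}=\Psi_{ij}$, and only then reads off $\Omega_{ij}=\nu_{ij}^{-1/2}\overline K^{1/2}\Gamma_{ij}=\mu_i^{-1/2}\Psi_{ij}$; you instead verify the two orthogonality relations directly on the candidate $\Omega_{ij}$ and argue that Theorem~\ref{thm:simdiag} only depends on those relations plus completeness. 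Your shortcut is legitimate (the proof of Theorem~\ref{thm:simdiag} indeed uses nothing beyond $\inprod{\Omega_k,\Omega_j}_{\overline K}=\nu_k^{-1}\delta_{kj}$, $\inprod{(\Sigma_F\otimes I)\Omega_k,\Omega_j}_{L^2}=\delta_{kj}$, and the expansion of $G$ in the $\Omega_k$), and it buys you exactly what you say: you never have to touch $\overline K^{1/2}$, and eigenvalue multiplicities cause no trouble. The one point you gloss over is completeness in $\mathcal{G}$ rather than in $\mathcal{G}_1$: your $\Omega_{ij}$ all lie in $\mathcal{G}_1$, so for them to span $\mathcal{G}$ you need $\mathcal{G}_0=\{0\}$. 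The paper makes this explicit: since the $\Psi_{ij}=\phi_i\otimes\varphi_j$ are the eigenfunctions of $K_1$, we have $\mathcal{G}_1=\spn\{\phi_i\otimes\varphi_j\}$, and Assumption~\ref{assump:psd} (injectivity of $\Sigma_F\otimes I$ on $\mathcal{G}$) forces the orthogonal complement $\mathcal{G}_0$ to be trivial. You should add that one line; otherwise the argument is complete.
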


We prove the proposition above in Appendix~\ref{app:rates}. As an example, the assumptions in this proposition hold when $D_\cX = D_\cY = [0, 1]$ and $F$ is a Brownian bridge with variance $\sigma^2$. In this case, $\Sigma_F(x, \xi) = \sigma^2[\min(x, \xi) - x\xi]$ for any $x, \xi \in [0, 1]$ with eigenvalues $\mu_i = \frac{\sigma^2}{\pi i}$ and eigenfunctions $\phi_i(x) = \sqrt{2}\sin(\pi i x)$. If we choose $\cG = W_1^2([0, 1]^2)$ to be the space of Sobolev-1 Green's functions on $D_\cX \times D_\cY = [0, 1]^2$ with Dirichlet boundary conditions at $\partial(D_\cX \times D_\cY)$, then as described in Section~\ref{subsec:kernel_examples} it has reproducing kernel
\begin{equation}
    K(x, y, \xi, \eta) = 2\sum_{i=1}^\infty\sum_{j=1}^\infty \frac{\sin(\pi ix)\sin(\pi jy)\sin(\pi i\xi)\sin(\pi j\eta)}{\pi^2(i^2 + j^2)}
\end{equation}
whose eigenvalues are $\rho_{ij} = \frac{1}{2\pi^2(i^2 + j^2)}$ with eigenfunctions $\Psi_{ij}(x, y) = \sqrt{2}\sin(\pi ix) \cdot \sqrt{2}\sin(\pi jy)$. Since we know that $\Psi_{ij} = \phi_i \otimes \phi_j$ then the assumptions of Proposition~\ref{prop:simdiagsimple} are satisfied. Because $K$ is a bounded Mercer kernel on $[0, 1]^2$ then from Example 1 of Appendix~\ref{app:eig_decay} we can check that $\rho_k \lesssim k^{-1}$. Finally, by applying Proposition~\ref{prop:simdiagsimple} this tells us that $\gamma_k \lesssim k^{-1}$.~\\

In order to prove an oracle inequality for $\widehat{G}_{n, \lambda}$, we must require the coefficients of the simultaneous diagonalization $\gamma_k$ to decrease at a certain rate. Here we take inspiration from the setting discussed in Proposition~\ref{prop:simdiagsimple} and make the following assumption on the coefficients $\gamma_k$.
\begin{assumption}\label{assump:gammadecay}
    When simultaneously diagonalizing $\|\cdot\|_{\overline{K}}$ and $\|\cdot\|_{\Sigma_F}$ in Theorem~\ref{thm:simdiag}, we assume that $\gamma_k \lesssim k^{-r}$ for some $r > \frac{1}{2}$.
\end{assumption}

\subsection{Oracle Inequality for $\widehat{G}_{n, \lambda}$}\label{subsec:oracle_ineq}
Now we are in the setting to prove the oracle inequality which bounds the difference of the expected risks $R(\widehat{G}_{n, \lambda}) - R(G_\cG)$ between our Green's function estimator and the oracle. Here we again use the notation
\begin{equation}
    G^T(F) = \int_{D_\cX}G(x, y)F(x)\ud x
\end{equation}
to denote a Green's function $G \in L^2(D_\cX \times D_\cY)$ integrated against an input $F \in L^2(D_\cX)$.

By definition of the oracle $G_\cG \in \argmin_{G \in \cG}R(G)$, we know that $\mathbb{E}\|U - G(F)\|_{L^2(D_\cY)}^2$ for all $G \in \cG$ is minimized at $G = G_\cG$. Hence, for any $G \in \cG$ by the optimality of $G_\cG$ and the Pythagorean theorem we see that
\begin{equation}
\begin{split}
    R(G) &= \mathbb{E}\|U - G^T(F)\|_{L^2(D_\cY)}^2\\
    &= \mathbb{E}\|U - G_\cG^T(F)\|_{L^2(D_\cY)}^2 + \mathbb{E}\|G^T(F) - G_\cG^T(F)\|_{L^2(D_\cY)}^2\\
    &= R(G_\cG) + \|G - G_\cG\|_{\Sigma_F}^2
\end{split}
\end{equation}
In particular, setting $G = \widehat{G}_{n, \lambda}$ in the equation above, we need to bound the norm $\|\widehat{G}_{n, \lambda} - G_\cG\|_{\Sigma_F}^2$. First we define the intermediate oracle
\begin{equation}
    \overline{G}_{\infty, \lambda} = \argmin_{G \in \cG}\Big\{R(G) + \lambda J(G)\Big\}
\end{equation}
which is unique since $\|G - G_\cG\|_{\Sigma_F}$ is strictly convex because $\Sigma_F \otimes I$ is positive definite over $\cG$. Then by the Cauchy--Schwarz inequality we can decompose
\begin{equation}\label{eq:triangleineq}
    \|\widehat{G}_{n, \lambda} - G_\cG\|_{\Sigma_F}^2 \lesssim \underbrace{\|\widehat{G}_{n, \lambda} - \overline{G}_{\infty, \lambda}\|_{\Sigma_F}^2}_\text{stochastic error} + \underbrace{\|\overline{G}_{\infty, \lambda} - G_\cG\|_{\Sigma_F}^2}_\text{deterministic error}.
\end{equation}

We bound the deterministic error simply by writing
\begin{equation}
    \|\overline{G}_{\infty, \lambda} - G_\cG\|_{\Sigma_F}^2 = R(\overline{G}_{\infty, \lambda}) - R(G_\cG)
\end{equation}
Since we know that $R(\overline{G}_{\infty, \lambda}) + \lambda J(\overline{G}_{\infty, \lambda}) \leq R(G_\cG) + \lambda J(G_\cG)$ then this proves that
\begin{equation}\label{eq:deterrbound}
    \|\overline{G}_{\infty, \lambda} - G_\cG\|_{\Sigma_F}^2 \leq \lambda\Big(J(G_\cG) - J(\overline{G}_{\infty, \lambda})\Big) \leq \lambda J(G_\cG).
\end{equation}

For the stochastic error term, it takes a bit more work to show the following bound.
\begin{lemma}\label{lem:stocherrbound}
    If $\frac{\log(1/\delta)}{n}\lambda^{-\frac{1}{r}} \lesssim 1$, then for the estimator $\widehat{G}_{n, \lambda}$ we have that
    \begin{equation}
        \|\widehat{G}_{n, \lambda} - \overline{G}_{\infty, \lambda}\|_{\Sigma_F}^2 \lesssim \max\Big(1, \|G_\cG\|_\text{op}, \lambda J(G_\cG)\Big)\frac{\log(1/\delta)}{n}\lambda^{-\frac{1}{r}}
    \end{equation}
    with probability at least $1 - \delta$.
\end{lemma}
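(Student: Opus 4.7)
The plan is to exploit the simultaneous diagonalization from Theorem~\ref{thm:simdiag}. Expanding any $G \in \mathcal{G}$ as $G = \sum_k g_k \Omega_k$ gives $\|G\|_{\Sigma_F}^2 = \sum_k g_k^2$ and $J(G) = \sum_k \gamma_k^{-1} g_k^2$, so I can track $\widehat{G}_{n\lambda}$ and $\overline{G}_{\infty\lambda}$ coordinate by coordinate. Setting $s_k := \inprod{C, \Omega_k}_{L^2}$ with $C := \mathbb{E}[F \otimes U]$ and $\hat{s}_k := \inprod{\widehat{C}, \Omega_k}_{L^2}$ with $\widehat{C} := \frac{1}{n}\sum_i F_i \otimes U_i$, and using $\inprod{\Omega_k, \Omega_l}_{\Sigma_F} = \delta_{kl}$ together with $\inprod{\proj_{\mathcal{G}_1}\Omega_k, \proj_{\mathcal{G}_1}\Omega_l}_{K_1} = \gamma_k^{-1}\delta_{kl}$ (which follows from $\|\Omega_k\|_{\overline{K}}^2 = 1+\gamma_k^{-1}$), the population normal equation for $\overline{G}_{\infty\lambda}$ diagonalizes, yielding the closed form $\bar{g}_k = \gamma_k s_k/(\gamma_k+\lambda)$. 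The empirical equation reads $(A + \lambda\Lambda)\hat{g} = \hat{s}$, where $A_{kl} := \frac{1}{n}\sum_i \inprod{\Omega_k^T(F_i), \Omega_l^T(F_i)}_{L^2(D_\cY)}$ satisfies $\mathbb{E}[A] = I$ and $\Lambda := \mathrm{diag}(\gamma_k^{-1})$.

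Subtracting the two normal equations gives the key identity $(A + \lambda\Lambda)(\hat{g} - \bar{g}) = (\hat{s} - s) - (A - I)\bar{g}$, so I would split
$\|\widehat{G}_{n\lambda} - \overline{G}_{\infty\lambda}\|_{\Sigma_F}^2 = \|\hat{g} - \bar{g}\|_{\ell^2}^2$ into (i) a noise piece $(A + \lambda\Lambda)^{-1}(\hat{s} - s)$ and (ii) a design-fluctuation piece $(A + \lambda\Lambda)^{-1}(A - I)\bar{g}$. Introduce the effective dimension $N \asymp \lambda^{-1/r}$, the threshold where $\lambda\gamma_k^{-1} \asymp 1$ under Assumption~\ref{assump:gammadecay}: on the tail $k > N$ the regularizer dominates $A + \lambda\Lambda$ coordinatewise, so those contributions are absorbed by the convergent filter $\gamma_k/(\gamma_k + \lambda)$, while on the head $k \leq N$ everything reduces to finite-dimensional concentration.

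The concentration step is the core of the argument. Decomposing $U_i = \mathcal{T}^*(F_i) + \varepsilon_i$ (using only Assumption~\ref{assump:linear_growth} and subGaussianity of $F, \varepsilon$), for piece (i) I would apply a subGaussian Bernstein-type bound together with a union bound over the $N$ head coordinates to get $\sum_{k \leq N}(\hat{s}_k - s_k)^2 \lesssim N \log(1/\delta)/n$, with the tail controlled by the $\gamma_k/(\gamma_k + \lambda)$ filter. For piece (ii), the crux is an operator-norm concentration $\|A_{[N]} - I_N\|_\text{op} \lesssim \sqrt{N \log(1/\delta)/n}$, obtained by writing $A - I$ as a sum of $n$ i.i.d.\ centered operators built from the subGaussian vectors $(\Omega_k^T(F_i))_k$. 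The condition $\log(1/\delta)\lambda^{-1/r}/n \lesssim 1$ is precisely what is needed for the head block $(A + \lambda\Lambda)_{[N]}$ to remain invertible with $\|(A + \lambda\Lambda)^{-1}_{[N]}\|_\text{op} \lesssim 1$. This yields $\|(A-I)\bar{g}\|_{\ell^2}^2 \lesssim (N/n)\|\bar{g}\|_{\ell^2}^2$; the factor $\|\bar{g}\|_{\ell^2}^2 \leq \|G_\mathcal{G}\|_{\Sigma_F}^2 \lesssim \|G_\mathcal{G}\|_\text{op}^2\,\mathbb{E}\|F\|_{L^2(D_\cX)}^2$ accounts for $\|G_\mathcal{G}\|_\text{op}$ in the final $\max$, while the tail portion of (ii) contributes at most $\lambda J(G_\mathcal{G})$ via $\sum_k \lambda\gamma_k^{-1}\bar{g}_k^2 \leq \lambda J(G_\mathcal{G})$. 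Summing the head and tail bounds reproduces the stated rate.

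The main obstacle will be the operator-norm concentration for $A_{[N]} - I_N$ in this genuinely functional setting: $F$ is only subGaussian (not bounded) and takes values in an infinite-dimensional Hilbert space, so textbook matrix Bernstein does not apply verbatim. I would calibrate a matrix-Bernstein / Hanson--Wright-type inequality to subGaussian functional data that tracks the effective dimension $N$ and the subGaussian parameter $\Gamma_F$ while handling the fact that the coordinates $\Omega_k^T(F_i)$ are themselves random $L^2(D_\cY)$-valued functions rather than scalars. Everything else — propagating the bounds through the identity for $\hat{g}-\bar{g}$, the truncation at $N = \lambda^{-1/r}$, and assembling the $\max(1,\|G_\mathcal{G}\|_\text{op},\lambda J(G_\mathcal{G}))$ factor — is algebraic bookkeeping once this functional Bernstein bound is in hand.
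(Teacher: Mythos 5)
Your route is genuinely different from the paper's. You work directly with the empirical normal equations in the simultaneously diagonalizing basis, writing $(A+\lambda\Lambda)(\hat g-\bar g)=(\hat s-s)-(A-I)\bar g$ and inverting the empirical design operator via a head/tail split at the effective dimension $N\asymp\lambda^{-1/r}$ — the Caponnetto--De Vito style analysis of kernel ridge regression. The paper instead follows Cox--O'Sullivan: it introduces the linearization $\tilde G=\overline{G}_{\infty\lambda}-\nabla^2R_\lambda^{-1}\bigl(\nabla\widehat R_\lambda(\overline{G}_{\infty\lambda})\bigr)$, which is a Newton step with the \emph{population} Hessian, so the only operator ever inverted is the deterministic diagonal $\nabla^2R_\lambda$. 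The term $\|\tilde G-\overline{G}_{\infty\lambda}\|_{\Sigma_F}^2$ is then $\tfrac14\sum_k(1+\lambda\gamma_k^{-1})^{-2}\bigl(\delta\widehat R_\lambda(\overline{G}_{\infty\lambda})[\Omega_k]\bigr)^2$, controlled by scalar Bernstein bounds for the gradient against fixed directions (exactly your three-way split of $U_i=\mathcal T^*(F_i)+\varepsilon_i$ through $G_\cG$, which is where $\|G_\cG\|_\text{op}$ and $\lambda J(G_\cG)$ enter), and the second-order remainder $\|\widehat G_{n\lambda}-\tilde G\|_{\Sigma_F}$ is shown to be a contraction of $\|\widehat G_{n\lambda}-\overline{G}_{\infty\lambda}\|_{\Sigma_F}$ by a factor $\lambda^{-\frac{1}{2r}}\sqrt{\log(1/\delta)/n}$, after which a self-bounding triangle inequality closes the loop under the standing condition $\frac{\log(1/\delta)}{n}\lambda^{-1/r}\lesssim1$. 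What the paper's route buys is precisely the avoidance of your hardest step; what your route would buy is a cleaner, fully explicit filter $\gamma_k/(\gamma_k+\lambda)$ and a bias--variance decomposition that is standard in the KRR literature.

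That said, there is a real gap where you have placed your "main obstacle," and it is not calibration or bookkeeping. The operator-norm concentration $\|A_{[N]}-I_N\|_\text{op}\lesssim\sqrt{N\log(1/\delta)/n}$ for unbounded subGaussian functional data, together with the control of the off-diagonal coupling between the head block and the infinite tail (your claim that the tail is "absorbed by the filter" ignores that $A$ is not block-diagonal, so head and tail coordinates of $\hat g-\bar g$ interact through $A_{[N],[>N]}$), is the entire analytic content of the lemma in your formulation. The standard fix is to bound the whitened quantity $\|(I+\lambda\Lambda)^{-1/2}(A-I)(I+\lambda\Lambda)^{-1/2}\|_\text{op}$ globally rather than truncating, which requires a dimension-free matrix Bernstein or Hanson--Wright inequality for Hilbert-space-valued subGaussian vectors with effective dimension $\Tr\bigl((I+\lambda\Lambda)^{-1}\bigr)\asymp\lambda^{-1/r}$; none of the tools stated in the paper's appendices (Lemma~\ref{lem:subgcomp}, Remark~\ref{rem:psi1}, scalar Bernstein) deliver an operator-norm bound, only bounds for fixed test directions. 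Until that lemma is supplied, the inversion of $(A+\lambda\Lambda)$ — and hence the whole argument — is not established. Two smaller points: your attribution of $\|G_\cG\|_\text{op}$ to $\|\bar g\|_{\ell^2}^2\lesssim\|G_\cG\|_\text{op}^2\,\mathbb E\|F\|_{L^2(D_\cX)}^2$ produces the square of the operator norm rather than the first power appearing in the statement, and the misspecification term $G_\cG(F)-\mathcal T^*(F)$ must also be tracked inside your noise piece $\hat s-s$ (it is where the paper's $\max(1,\|G_\cG\|_\text{op})$ actually originates), not only in the design-fluctuation piece.
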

Combining~\eqref{eq:triangleineq},~\eqref{eq:deterrbound} and Lemma~\ref{lem:stocherrbound} we get
\begin{equation}
\begin{split}
    \|\widehat{G}_{n, \lambda} - G_\cG\|_{\Sigma_F}^2 &\lesssim \max\Big(1, \|G_\cG\|_\text{op}, \lambda J(G_\cG)\Big)\frac{\log(1/\delta)}{n}\lambda^{-\frac{1}{r}} + \lambda J(G_\cG)\\
    &\lesssim \max\Big(1, \|G_\cG\|_\text{op}\Big)\frac{\log(1/\delta)}{n}\lambda^{-\frac{1}{r}} + \lambda J(G_\cG)
\end{split}
\end{equation}
since we assumed that $\frac{\log(1/\delta)}{n}\lambda^{-\frac{1}{r}} \lesssim 1$. Taking $\lambda \asymp (\frac{n}{\log(1/\delta)})^{-\frac{r}{r+1}}$ yields
\begin{equation}
\begin{split}
    \|\widehat{G}_{n, \lambda} - G_\cG\|_{\Sigma_F}^2 &\lesssim \max\Big(1, \|G_\cG\|_\text{op}, J(G_\cG)\Big)\Big(\frac{n}{\log(1/\delta)}\Big)^{-\frac{r}{r+1}}\\
    &\lesssim \max\Big(1, \|G_\cG\|_\text{op}, J(G_\cG)\Big)n^{-\frac{r}{r+1}}\log(1/\delta)
\end{split}
\end{equation}
with probability at least $1 - \delta$. Therefore, we have shown the following oracle inequality.
\begin{theorem}[Oracle Inequality]
\label{thm:oracle}
    The regularized RKHS estimator $\widehat{G}_{n, \lambda}$ from Theorem~\ref{thm:representer} with $\lambda \asymp \Big(\frac{n}{\log(1/\delta)}\Big)^{-\frac{r}{r+1}}$ satisfies the oracle inequality
    \begin{equation}
        R(\widehat{G}_{n, \lambda}) \leq R(G_\cG) +  C\max\Big(1, \|G_\cG\|_\text{op}, J(G_\cG)\Big)n^{-\frac{r}{r+1}}\log(1/\delta)
    \end{equation}
    with probability at least $1 - \delta$ for some numerical constant $C > 0$.
\end{theorem}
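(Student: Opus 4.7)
The plan is to turn the excess-risk claim into a bias/variance decomposition and then optimize $\lambda$. The Pythagorean identity at the start of Section~\ref{subsec:oracle_ineq} reduces the excess risk to a single norm, $R(\widehat{G}_{n\lambda})-R(G_\cG)=\|\widehat{G}_{n\lambda}-G_\cG\|_{\Sigma_F}^2$, because the cross terms vanish by oracle-optimality of $G_\cG$ and by mean-zero independence of $\eps$ from $F$. So the whole task reduces to controlling $\|\widehat{G}_{n\lambda}-G_\cG\|_{\Sigma_F}^2$. I would then introduce the deterministic oracle $\overline{G}_{\infty\lambda}$ used in~\eqref{eq:triangleineq}, apply the triangle inequality followed by $(a+b)^2\le 2(a^2+b^2)$, and split the error into a stochastic part $\|\widehat{G}_{n\lambda}-\overline{G}_{\infty\lambda}\|_{\Sigma_F}^2$ and a deterministic part $\|\overline{G}_{\infty\lambda}-G_\cG\|_{\Sigma_F}^2$.

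For the deterministic part I would simply use the optimality of $\overline{G}_{\infty\lambda}$: comparing $R(\overline{G}_{\infty\lambda})+\lambda J(\overline{G}_{\infty\lambda})\le R(G_\cG)+\lambda J(G_\cG)$ and invoking the Pythagorean identity again yields $\|\overline{G}_{\infty\lambda}-G_\cG\|_{\Sigma_F}^2\le \lambda J(G_\cG)$, as already noted in~\eqref{eq:deterrbound}. The stochastic part is exactly what Lemma~\ref{lem:stocherrbound} delivers: on the event it describes, of probability at least $1-\delta$ and under the side condition $\log(1/\delta)n^{-1}\lambda^{-1/r}\lesssim 1$, this term is bounded by $\max(1,\|G_\cG\|_\text{op},\lambda J(G_\cG))\cdot\log(1/\delta)n^{-1}\lambda^{-1/r}$. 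Under that same side condition the extra factor $\lambda J(G_\cG)$ inside the $\max$ contributes at most $\lambda J(G_\cG)$ to the overall bound, which can be merged into the deterministic term, so effectively the stochastic error is of order $\max(1,\|G_\cG\|_\text{op})\cdot\log(1/\delta)n^{-1}\lambda^{-1/r}$.

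Adding the two pieces gives a bound of the form $A\log(1/\delta)n^{-1}\lambda^{-1/r}+B\lambda$ with $A\lesssim\max(1,\|G_\cG\|_\text{op})$ and $B\le J(G_\cG)$. Balancing the two summands by choosing $\lambda\asymp (n/\log(1/\delta))^{-r/(r+1)}$ makes both of them of order $\max(1,\|G_\cG\|_\text{op},J(G_\cG))\cdot n^{-r/(r+1)}\log(1/\delta)$ and simultaneously verifies the side condition that the lemma requires, which yields the stated oracle inequality. The main obstacle is entirely concentrated in Lemma~\ref{lem:stocherrbound}; once that is in hand, what remains is a routine bias-variance balancing. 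The proof of the lemma itself is where the real technical work lives: one would expand $\widehat{G}_{n\lambda}-\overline{G}_{\infty\lambda}$ in the simultaneously diagonalizing basis $\{\Omega_k\}$ from Theorem~\ref{thm:simdiag}, exploit the polynomial decay $\gamma_k\lesssim k^{-r}$ from Assumption~\ref{assump:gammadecay} to obtain an effective dimension of order $\lambda^{-1/r}$, and apply subGaussian concentration to both the empirical covariance operator and the noise inner products $\langle\eps,\Omega_k\rangle$, with the $\|G_\cG\|_\text{op}$ factor arising from the oracle mismatch $\cT^*(F)-G_\cG^T(F)$.
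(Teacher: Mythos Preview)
Your proposal is correct and follows essentially the same route as the paper: reduce the excess risk to $\|\widehat{G}_{n\lambda}-G_\cG\|_{\Sigma_F}^2$ via the Pythagorean identity, split through $\overline{G}_{\infty\lambda}$ into deterministic and stochastic parts, bound the former by $\lambda J(G_\cG)$ via optimality and the latter by Lemma~\ref{lem:stocherrbound}, absorb the $\lambda J(G_\cG)$ factor inside the $\max$ using the side condition, and then balance $\lambda$. Your sketch of the ingredients behind Lemma~\ref{lem:stocherrbound} (simultaneous diagonalization in the $\{\Omega_k\}$ basis, effective dimension $\lambda^{-1/r}$ from the decay $\gamma_k\lesssim k^{-r}$, subGaussian concentration for the empirical covariance and noise terms, and the $\|G_\cG\|_{\text{op}}$ factor from the misspecification term $\mathcal{T}^*(F)-G_\cG^T(F)$) also matches the paper's actual proof in Appendix~D.
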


As a final note, if the reproducing kernel $K$ of $\cG$ is squared exponential or generally a smooth radial kernel, its eigenvalues decay exponentially~\citep[Section~3, Theorem~2]{belkin2018approximation}. This implies that $\gamma_k \lesssim \rho_k \lesssim k^{-r}$ for all $r > 0$. Hence, the optimal choice of regularizer is $\lambda \asymp \frac{\log(1/\delta)}{n}$ and the prediction error in Theorem~\ref{thm:oracle} above has rate $R(\widehat{G}_{n, \lambda}) - R(G_\cG) \lesssim \frac{\log(1/\delta)}{n}$.

\section{Examples}\label{sec:examples}
In this section, we show how the Green's functions several linear PDEs can be learned in an RKHS using the estimators defined in Section~\ref{sec:implementation}. On several examples, we show how RKHSs can be designed to enforce important physical constraints such as coordinate symmetries, time causality, and time invariance in these Green's function estimators.

\subsection{Poisson Equation}\label{subsec:poisson}
We begin with the one-dimensional Poisson equation 
\begin{equation}\label{eq:poisson}
    -\Delta u(x) = f(x) \ \text{on} \ D = [0, 1], \qquad u(0) = -0.1, \ u(1) = 0.1
\end{equation}
with Dirichlet boundary conditions. The input and output domains of $f(x)$ and $u(y)$ respectively are $D_\cX = D_\cY = [0, 1]$. The random input forcings $f(x)$ are generated from a squared exponential KLE with lengthscale $\ell = 0.01$ (see Appendix~\ref{app:dataexp} for details) and the corresponding solutions $u(y)$ are simulated with a finite difference solver. All input and output functions are discretized on a uniform $m_x = m_y = 100$ point grid on the unit interval. From this procedure we build up $n = 500$ input-output pairs $\{(f_i, u_i)\}_{i=1}^n$ on which we learn the Green's function $G(x, y)$ and bias term $\beta(y)$ of the Poisson equation. Here we do not add any noise to our data as we are interested in perfectly recovering the true Green's function and bias term of our PDE. In this example, we know the true form of the bias term is $\beta_\text{Poisson}(y) = 0.2x - 0.1$ and the Green's function is $G_\text{Poisson}(x, y)$ as given in~\eqref{eq:poisson_greens_function}.

In Figure~\ref{fig:poisson_kernels} we compute the relative error of our learned Green's function and bias term estimators from~\eqref{eq:estimator_re}, as well as their combined relative error on the train data from~\eqref{eq:forwardmap_re}, and we study how these errors behave as a function of training epochs in our optimization (top row). Each line color in the top row represents a choice of kernel $K(x, y, \xi, \eta)$ and $Q(y, \eta)$ which defines the RKHS $\cG$ for our Green's function and the RKHS $\cB$ for our bias term respectively. We take $K$ and $Q$ to be of the same type for each line plot (both exponential, both Mat\'{e}rn, etc.) and refer the reader to Section~\ref{subsec:kernel_examples} for the definitions of these kernels. The kernel lengthscales of $K$ and $Q$ in the $x$ and $y$ directions are set to $\sigma_x = \sigma_y = 0.2$ (see Appendix~\ref{app:kernel_lengthscale} for details). In the second and third rows, we plot the absolute difference of the learned estimators (after 500 training epochs) compared to the true Green's function and bias term of the Poisson equation.

From these results, we find that all estimators of the Green's function incur the most error around the diagonal $x = y$ where the function is not smooth. The exponential RKHS estimator gives the best approximation near the diagonal but suffers large approximation errors away from the diagonal where the Green's function is very smooth. The smoother squared exponential and Mat\'{e}rn 5/2 kernels give an improved fit away from the diagonal. Finally, the Mat\'{e}rn 3/2 gives the best estimator of the Green's function as it balances the degree of smoothness correctly and is able to nicely approximate the function both near and away from the diagonal $x = y$. All RKHS kernel estimators of the bias term provide a reasonable fit to the true linear bias term with expected ringing phenomena near the boundaries of the domain.

\begin{figure}[h]
\centering
\includegraphics[width=\textwidth]{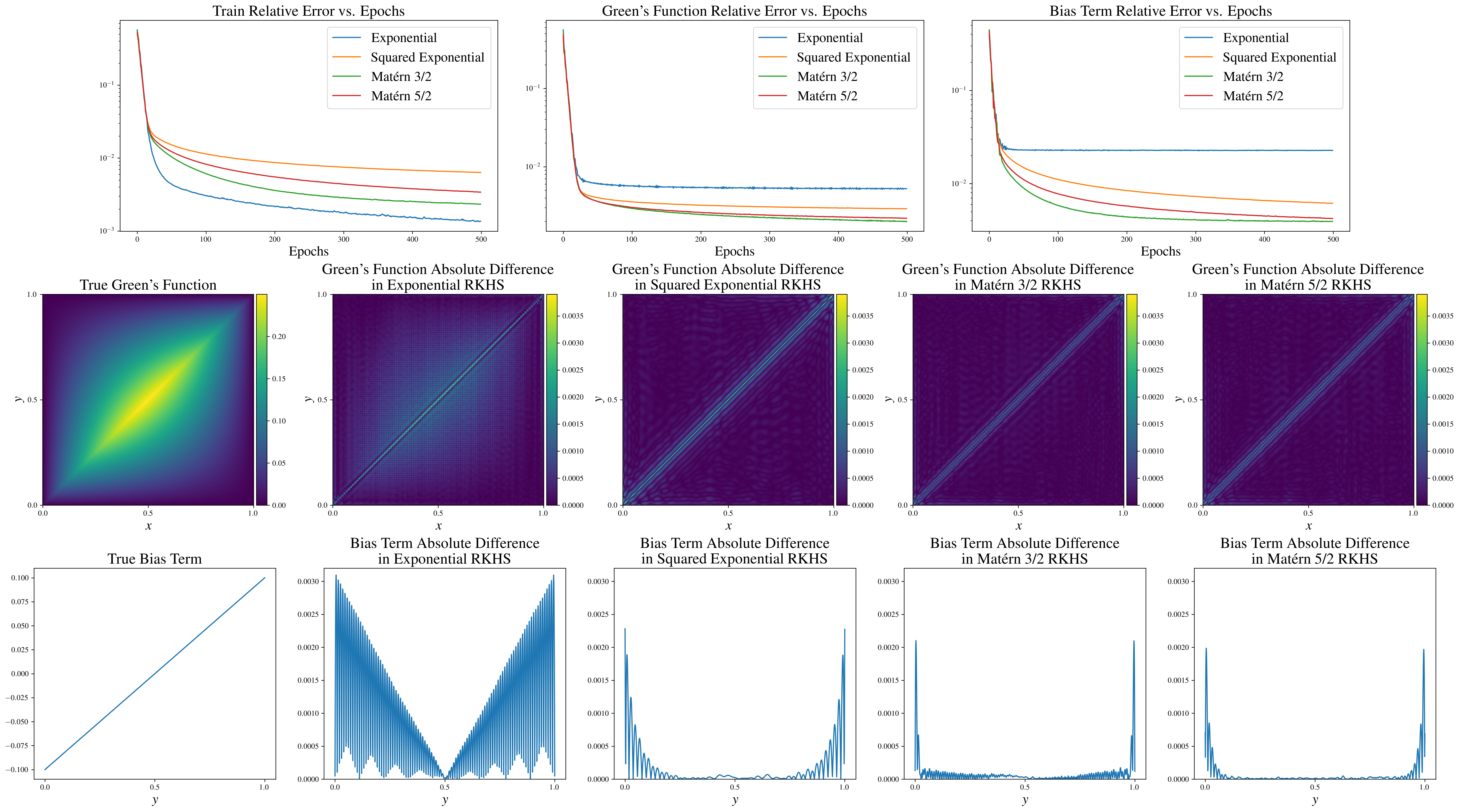}
\caption{Learning the Green's function and bias term of the Poisson equation in exponential, squared exponential, and Mat\'{e}rn RKHSs. Top row shows the relative error of the Green's function and bias term estimators as well as their combined relative error on the train data as a function of training epochs. Middle and bottom row show absolute differences of learned Green's functions and bias terms in each RKHS.}\label{fig:poisson_kernels}
\end{figure}

\subsection{Helmholtz Equation and Coordinate Symmetries}\label{subsec:helmholtz}
Now we study the one-dimensional Helmholtz equation 
\begin{equation}\label{eq:helmholtz}
    -\Delta u(x) - \omega^2u(x) = f(x) \ \text{on} \ D = [0, 1], \qquad u(0) = -0.1, \ u(1) = 0.1
\end{equation}
with Dirichlet boundary conditions and high wavenumber $\omega = 20$. Here the input and output domains for $f(x)$ and $u(y)$ are simply the unit interval $D_\cX = D_\cY = [0, 1]$. The input forcings $f(x)$ to the Helmholtz equation are generated from a squared exponential KLE with lengthscale $\ell = 0.01$ and the corresponding solutions $u(y)$ are simulated with a finite difference solver and corrupted with a fixed amount of i.i.d additive Gaussian noise as in the previous example. We are interested in learning the Green's function and bias term of this PDE, where we know their closed form expressions to be
\begin{equation}
\begin{gathered}
    G_\text{Helmholtz}(x, y) = 2\sum_{k=1}^\infty\frac{\sin(\pi kx)\sin(\pi ky)}{\pi^2k^2 - \omega^2}\\ \beta_\text{Helmholtz}(y) = 0.1\frac{(1+\cos(\omega))}{\sin(\omega)}\sin(\omega y) - 0.1\cos(\omega y).
\end{gathered}
\end{equation}
Note that the true Green's function of the Helmholtz equation is coordinate symmetric $G(x, y) = G(y, x)$ due to the self-adjointness of the differential operator $-(\Delta + \omega^2)$.

In Figure~\ref{fig:helmholtz} we study the robustness of our learning method to the number of samples, mesh discretization, and noise corruption in the Helmholtz equation. In these experiments, we choose as a reference $n = 100$ input-output samples, set a uniform mesh discretization at $m = m_x = m_y = 100$ points, and begin with no noise corruption. Then fixing two of the three parameters to their reference values, we vary one-at-a-time the number of input samples from $n = 1$ to $100$, the mesh discretization from $m = 1$ to $100$, and the noise corruption from $p = 0\%$ to $50\%$. We set the regularization parameter to $\lambda = 10^{-5}$ except for the noise experiments where we set $\lambda = 10^{-3}$ to achieve better noise robustness. For all experiments, our estimators for the Green's function $G(x, y)$ and bias term $\beta(y)$ are learned in a Mat\'{e}rn 5/2 RKHS with kernels $K(x, y, \xi, \eta)$ and $Q(y, \eta)$ defined as in Section~\ref{subsec:kernel_examples} with lengthscales $\sigma_x = \sigma_y = 0.02$.

In Figure~\ref{fig:helmholtz} we compute the relative errors of our Green's function and bias term estimators compared to their true functional form as defined in~\eqref{eq:estimator_re}. We see that the relative errors (shown in blue) decrease exponentially as a function of the number of input-ouput samples (n), and their mesh discretization/measurements (m). Most importantly, in the third column we show that our estimators (blue lines) are noise robust and scale linearly with the amount of noise present in the train data.

We perform an identical set of experiments where we explicitly enforce the coordinate symmetry $G(x, y) = G(y, x)$ in our Green's function estimator due to the self-adjointness of the differential operator in the Helmholtz equation. As described in Appendix~\ref{app:coord_symm}, this is done by transforming the Mat\'{e}rn 5/2 reproducing kernel $K(x, y, \xi, \eta)$ of our Green's function estimator into
\begin{equation}
    K_\text{symm}(x, y, \xi, \eta) = \frac{1}{4}\Big[K(x, y, \xi, \eta) + K(x, y, \eta, \xi) + K(y, x, \xi, \eta) + K(y, x, \eta, \xi)\Big].
\end{equation}
The Green's function $G$ of the RKHS defined by this symmetrized kernel will necessarily be coordinate symmetric. As shown in the first column of Figure~\ref{fig:helmholtz}, enforcing this coordinate symmetry into our Green's function leads to faster convergence of both the Green's function and bias term estimators when the number of samples are increased (orange lines). Furthermore, symmetrizing the Green's function significantly improves its robustness to noise as shown in the top rightmost plot.

\begin{figure}[h]
\centering
\centering
\includegraphics[width=\textwidth]{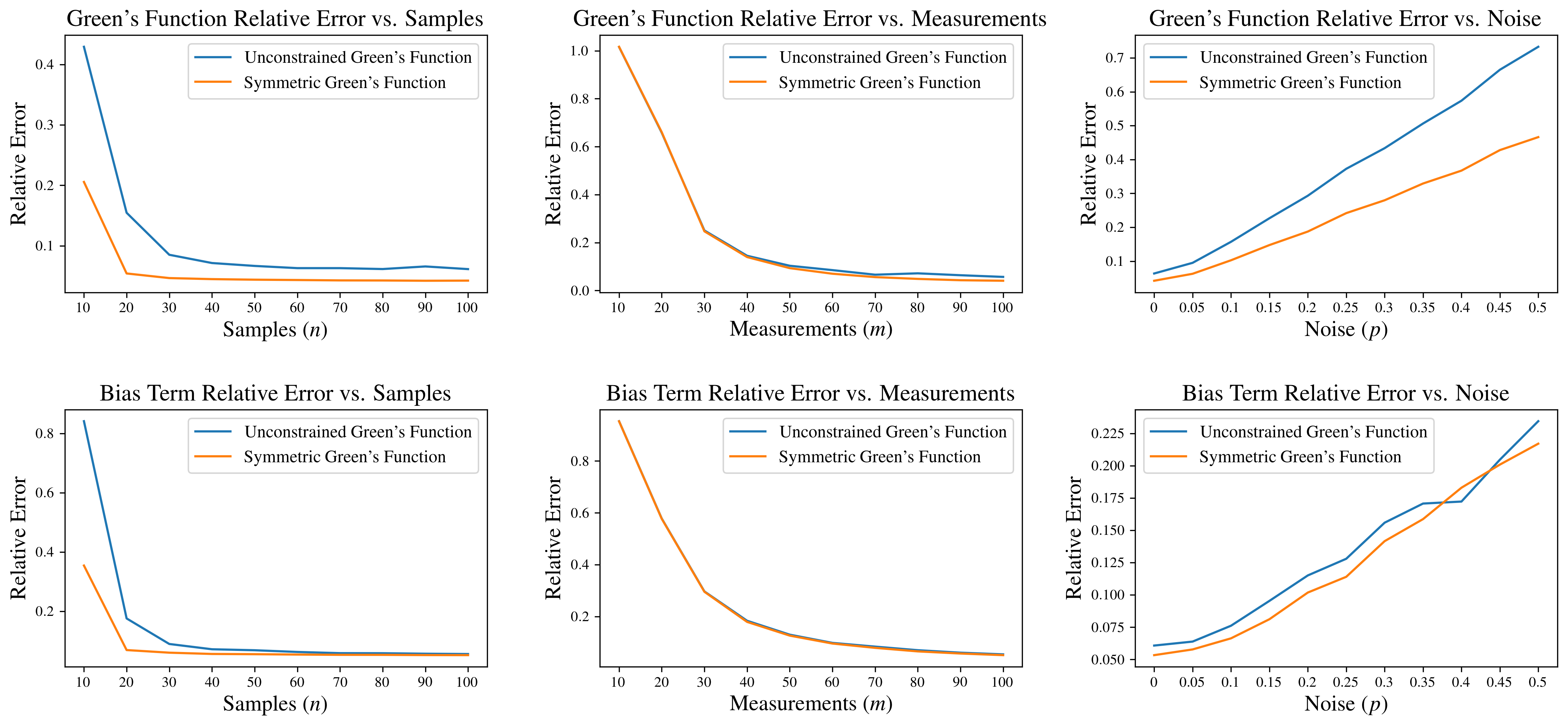}
\caption{Plots of the relative errors of the Green's function and bias term estimators on the Helmholtz equation as the number of sample (n), measurements (m), and noise corruption (p) are increased. Two sets of identical experiments are performed where in the first experiment the Green's function is learned in a Mat\'{e}rn 5/2 RKHS with no constraints, and in a second experiment in a symmetrized Mat\'{e}rn 5/2 RKHS such that the coordinate symmetries $G(x, y) = G(y, x)$ are enforced.}\label{fig:helmholtz}
\end{figure}

\subsection{Schr\"{o}dinger Equation}\label{subsec:schrodinger}
We now study the time-independent Schr\"{o}dinger equation in two spatial dimensions
\begin{equation}
	\Delta u(x_1, x_2) - V(x_1, x_2)u(x_1, x_2) = 0 \ \text{on} \ D = [0, 1]^2, \qquad u(x_1, x_2) = b(x_1, x_2) \ \text{on} \ \partial D
\end{equation}
where we are interested in learning the map from the boundary condition $b(x_1, x_2)$ on domain $D_\cX = \partial D$ to the solution $u(y_1, y_2)$ on domain $D_\cY = D$. Instead of defining the boundary conditions $b(x_1, x_2)$ as a function on the unit square $D$, we parametrize them as a function of arc length $b(x)$ for $x \in [0, 4]$ clockwise along the boundary of the unit square $\partial D$. Random input boundary conditions are generated by a KLE with squared exponential \textit{periodic} kernel of lengthscale $\ell = 0.01$ and the corresponding solutions $u(y)$ are simulated with a finite difference solver and corrupted with $20\%$ i.i.d additive Gaussian noise (see Appendix~\ref{app:dataexp}). The output solutions $u$ are discretized at $m_y = m = 50$ uniform grid points in both the $y_1$ and $y_2$ dimensions. Since the boundary conditions $b(x)$ wrap around the unit square, we discretize them correspondingly at $m_x = 4m-3 = 197$ uniform grid points.

The potential function $V(x_1, x_2)$ for this example depicted in Figure~\ref{fig:schrodinger} (top left) is chosen as a step function which is positive in a hexagonal-shaped well and zero outside. In Figure~\ref{fig:schrodinger} (top center and right) we learn the Green's function $\widehat{G}(x, y_1, y_2): \partial D \to D$ mapping the boundary condition $b(x)$ of the PDE to the solution $u(y_1, y_2)$ in a Mat\'{e}rn 5/2 RKHS with kernel
\begin{equation}
	K(x, y_1, y_2, \xi, \eta_1, \eta_2) = C_\nu\Big(\sqrt{\frac{(x - \xi)^2}{\sigma_x^2} + \frac{(y_1 - \eta_1)^2}{\sigma_y^2} + \frac{(y_2 - \eta_2)^2}{\sigma_y^2}}\Big).
\end{equation}
for lengthscales $\sigma_x = \sigma_y = 0.02$. Here $\nu = 5/2$ where $C_\nu$ is the Mat\'{e}rn covariance function defined in~\eqref{eq:matern} of Section~\ref{subsec:kernel_examples}. We train on 500 noisy samples $(b_i, u_i)$ and regularize our estimator with $\lambda = 10^{-4}$ penalty which allows us to learn a smooth Green's function even with $20\%$ noise in our output samples.

In the top center plot of Figure~\ref{fig:schrodinger} we see that the estimator has learned an impulse response from perturbing the boundary condition at a given point. By integrating the learned Green's function along the boundary $\partial D$ of the unit square (top right), we clearly see the hexagonal shape of the potential $V(x_1, x_2)$ implying that solutions of the PDE have smaller magnitude in this hexagonal region as expected from the form of the PDE. In the bottom of Figure~\ref{fig:schrodinger} we study how our learned Green's function estimator performs on 500 new test samples when we vary the lengthscale $\ell$ of the boundary condition from $0.01$ to $10.0$ and the mesh discretization from $m = 50$ to $150$. We remind the reader that the mesh discretization in the $y_1$ and $y_2$ directions both scale as $m_y = m$ and the mesh discretization of the boundary condition scales as $m_x = 4m-3$. From the botom plot of Figure~\ref{fig:schrodinger} we see that our learned Green's function is independent of mesh discretization as the relative test error plateaus quickly as we increase $m$. Furthermore, the relative error of our estimator only decreases as we raise the boundary condition lengthscale from $\ell = 0.1$ on which it was trained to $\ell = 10.0$ (e.g. very smooth boundary conditions). When we evaluate our Green's function estimator on test boundary conditions with lengthscale  below $\ell = 0.1$ (on which it was trained), then the predictive ability of our estimator gradually worsens. This behavior is expected as the Green's function estimator cannot make perfect predictions on inputs which exceed the lengthscale of the boundary conditions it was trained. We include example predictions of our Green's function estimator for input boundary conditions of several lengthscales in Appendix~\ref{app:test_pred}

\begin{figure}[h]
\centering
\includegraphics[width=\textwidth]{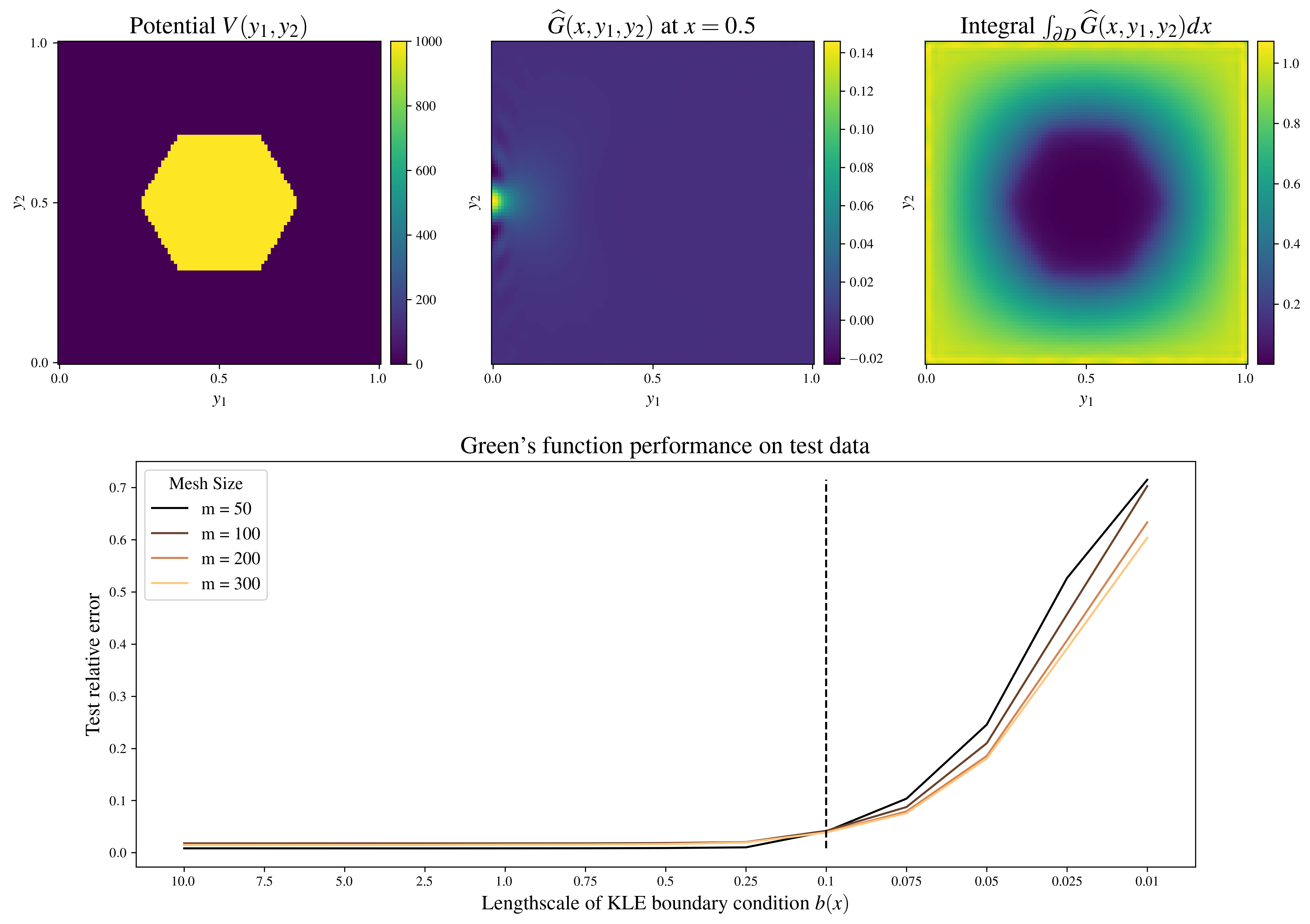}
\caption{Learning the Schr\"{o}dinger equation Green's function $\widehat{G}: b \mapsto u$ from Dirichlet boundary conditions on the boundary of the unit square to the solution in the interior. The Schr\"{o}dinger equation has a hexagonal step-function potential shown in the top left plot. Here we estimate $\widehat{G}$ in a Mat\'{e}rn 5/2 RKHS and several views of the learned Green's function are shown in the top center and right plots. In the bottom row we plot the test error of our estimator and show that with no additional training, it can simulate the PDE on finer grids and generalizes to input boundary conditions with varying lengthscales. Black dashed line indicates the lengthscale of the boundary conditions $b$ which $\widehat{G}$ was trained on.}\label{fig:schrodinger}
\end{figure}

\subsection{Fokker--Planck Equation}\label{subsec:FPE}
Now we show how to estimate the Green's function or fundamental solution of the \textit{Fokker--Planck equation} (FPE) which is a generalization of the diffusion (heat) equation and describes the evolution of a distribution of Brownian particles in a potential $V(x)$. Each particle's position $X_t$ is governed by a stochastic differential equation (SDE) of the form
\begin{equation}
    dX_t = -\frac{dV}{dx}(X_t)dt + \sqrt{2\alpha}dW_t
\end{equation}
where the \textit{diffusivity} $\alpha > 0$ is a constant and $W_t$ is a Wiener process. Assuming that the particles at time $t = 0$ are distributed by $X_0 \sim u_0$ then the Fokker--Planck equation for the probability density $u(x, t)$ is
\begin{equation}\label{eq:FPE}
    \frac{\partial}{\partial t}u(x, t) = \frac{\partial}{\partial x}\Big[\frac{dV}{dx}(x)u(x, t)\Big] + \alpha\frac{\partial^2}{\partial x^2}u(x, t), \quad u(x, 0) = u_0(x).
\end{equation}
Defining the \textit{probability flux}
\begin{equation}
    j(x, t) = \frac{dV}{dx}(x)u(x, t) + \alpha\frac{\partial}{\partial x}u(x, t)
\end{equation}
we can rewrite the FPE above as
\begin{equation}
    \frac{\partial}{\partial t}u(x, t) = \frac{\partial}{\partial x}j(x, t), \quad u(x, 0) = u_0(x).
\end{equation}
Now take the space-time domain $(x, t) \in D = [a, b] \times \mathbb{R}_+$ where the spatial domain is a finite interval. In order for the solution of~\eqref{eq:FPE} to be well-specified we impose \textit{reflecting} boundary conditions $j(a, t) = j(b, t) = 0$. This enforces that particles which reach the boundary are reflected back into the domain such that no mass leaves the domain (i.e. probability flux is zero).\\

In the following example, we simulate \eqref{eq:FPE} on the domain $D = [-2, 2] \times [0, 1]$ with the potential function
\begin{equation}\label{eq:FPEpotential}
    V(x) = x^4 - 3x^2.
\end{equation}
We learn a \textit{fundamental solution} $\widehat{G}(x, y, t)$ that maps the initial distribution $u_0(x)$ on the domain $D_\cX = [-2, 2]$ to the distribution at all future times $u(y, t)$ on the domain $D_\cY = D$. Here we learn our Green's function in a Mat\'{e}rn 3/2 RKHS with the kernel
\begin{equation}\label{eq:FPEkernel}
    K(x, y, t, \xi, \eta,\tau) = C_\nu\Big(\sqrt{\frac{(x - \xi)^2}{\sigma_x^2} + \frac{(y - \eta)^2}{\sigma_y^2} + \frac{(t - \tau)^2}{\sigma_t^2}}\Big)
\end{equation}
for $\nu = 3/2$ where $\sigma_x = \sigma_y = 0.16$ and $\sigma_t = 0.04$. Here $C_\nu$ is the Mat\'{e}rn covariance function defined in~\eqref{eq:matern} of Section~\ref{subsec:kernel_examples}.

Our Green's function estimator is trained on 500 samples where the inputs $(u_0)_i$ are generated from a Gaussian process KLE with a squared exponential kernel of lengthscale $\ell = 0.1$. For each initial condition $(u_0)_i$, we simulate the output solutions $u_i$ by a matrix numerical method~\citep{holubec2019physically} and corrupt our outputs with $20\%$ additive Gaussian noise. In the training data, the input initial conditions are discretized on $m_x = m = 50$ uniform grid points and the output solutions are discretized on $m_y \times m_t$ grid points where $m_y = m_t = m = 50$. The estimator is trained to convergence for 100 epochs with a $\lambda = 10^{-5}$ penalty on its RKHS norm.

Figure~\ref{fig:fokker_planck} shows cross-sections of our learned Green's function $\widehat{G}(x, y, t)$ at several timepoints $t$. From the learned Green's function we extract important features of the Fokker--Planck dynamics. At $t = 0$, the Green's function learns a map from $u_0(x) \mapsto u(y, 0)$ which is as close as possible to a delta function, limited only by the fixed lengthscale $\sigma_x, \sigma_y$ of our RKHS kernel. As time $t$ increases, the Green's function maps all the mass in $u_0(x)$ for $x > 0$ and $x < 0$ near the points $\pm 1.225$ respectively which correspond to the basins of the potential $V(x)$. As expected, our Green's function has learned that movement of mass for the FPE tends to the basins of the potential function.

The second row of Figure~\ref{fig:fokker_planck} shows the relative test error of our estimator when it is evaluated on 500 new test samples generated from a different distribution than the train data. For each test data set, we generate the initial conditions from a KLE with squared exponential kernel and vary the lengthscale of this process from $\ell = 0.01$ to 10.0. We also vary the mesh discretization of the input and output samples jointly from $m_x = m_y = m_t = m =  50$ to 150. We observe that the relative test error drops as we raise the lengthscale of the initial condition $u_0$ from $\ell = 0.1$ to $10.0$ but, as expected, increases if the lengthscale becomes smaller than $\ell = 0.1$ on which our estimator was trained. Additionally, we observe that our Green's function is insensitive to the mesh discretization and quickly plateaus as the mesh size $m$ is increased. Example predictions of our Green's function estimator on initial conditions with varied lengthscales are shown in Appendix~\ref{app:test_pred}.

\begin{figure}[h]
\centering
\includegraphics[width=\textwidth]{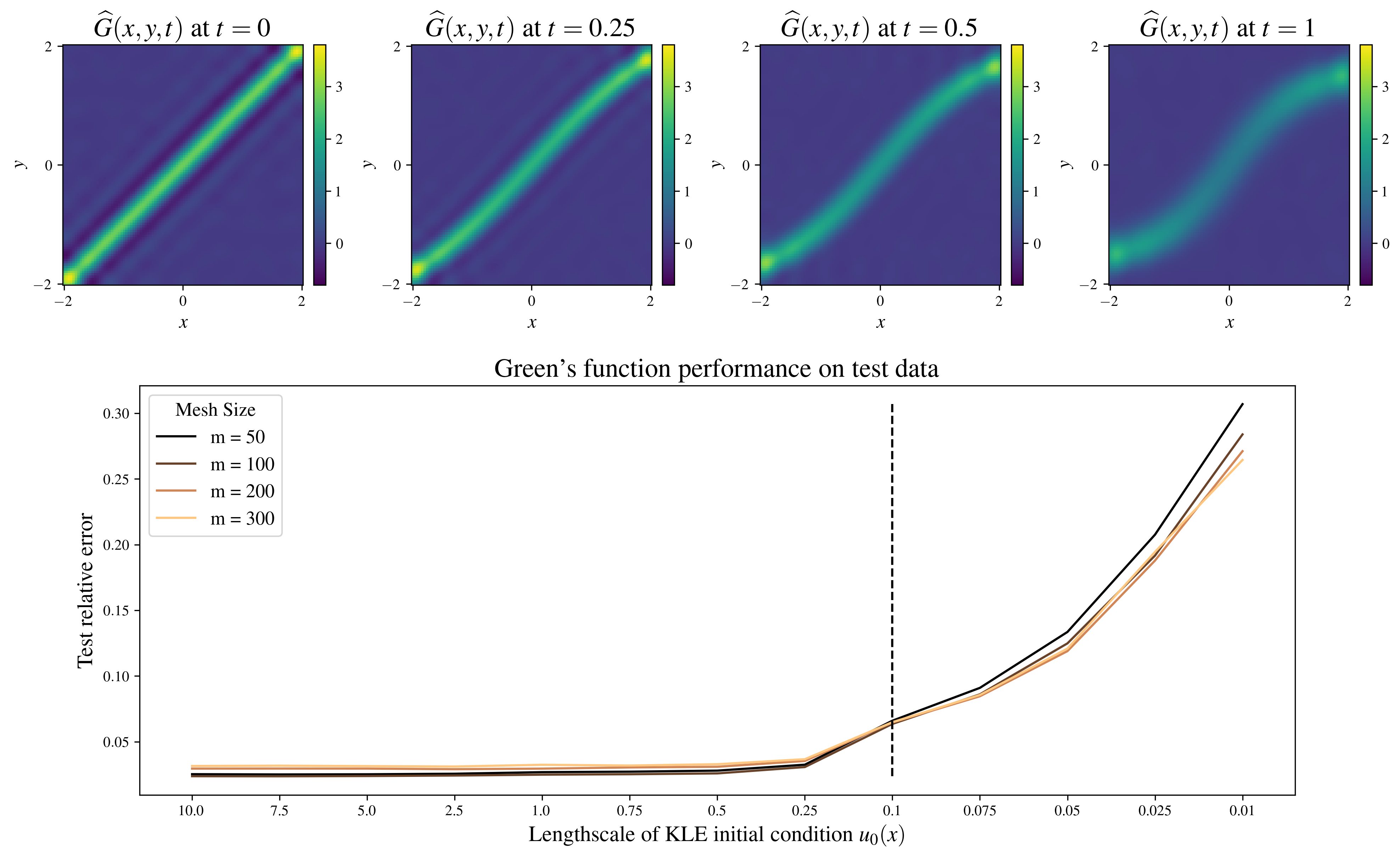}
\caption{Learning the Green's function of the Fokker--Planck equation in a Mat\'{e}rn 3/2 RKHS. Top row shows the Green's function estimator $\widehat{G}(x, y, t)$ at different time slices. Bottom row shows that our estimator generalizes to test data with inputs discretized on finer grids and with varying lengthscales. Black dashed line indicates the lengthscale of the initial conditions $u_0$ which $\widehat{G}$ was trained on.}\label{fig:fokker_planck}
\end{figure}

\subsection{Heat Equation with Time Invariance \& Causality Constraints}\label{subsec:heateq}
Many physical and biological systems are time-dependent where the inputs to the system $f(x, t)$ and output solutions $u(y, t)$ can be functions of time (here denoted by $t$). For example, the heat equation on the space-time domain $D = [0, 1] \times [0, \infty)$ with Dirichlet boundary conditions is
\begin{equation}\label{eq:heatpde}
    \begin{cases}
    \frac{\partial u}{\partial t} - \alpha\frac{\partial^2u}{\partial x^2} = f(x, t), \quad (x, t) \in [0, 1] \times [0, \infty)\\
    u(0, t) = u(1, t) = 0\\
    u(x, 0) = 0
    \end{cases}
\end{equation}
for some constant $\alpha > 0$. The function $u(x, t)$ is the distribution of heat in space at time $t$ and $f(x, t)$ is a heat source. Its fundamental solution has the form
\begin{equation}
\begin{gathered}
    G_\text{Heat}(x, s, y, t) = \mathbf{1}_{\{t \geq s\}}\sum_{k=1}^\infty 2\sin(\pi kx)\sin(\pi ky)e^{-\alpha k^2\pi^2(t-s)}\\
    u(y, t) = \int_0^\infty\int_0^\infty G_\text{Heat}(x, s, y, t)f(x, s)\ud x \ud s.
\end{gathered}
\end{equation}

Another example is the damped harmonic oscillator for $t \in \mathbb{R}$,
\begin{equation}
    \frac{\partial^2u}{\partial t^2} + c\frac{\partial u}{\partial t} + \omega^2u = f(t)
\end{equation}
where $u(t)$ is the position of the oscillator at time $t$, $f(t)$ is the forcing, $c$ is the damping coefficient, and $\omega$ is the frequency of the oscillator. Letting $r_1, r_2$ be the roots of the quadratic equation $x^2 + cx + \omega^2$ we can write the solution as
\begin{equation}
    u(t) = \int_{-\infty}^\infty G_\text{Harmonic}(s, t)f(s)ds, \quad G_\text{Harmonic}(s, t) = \mathbf{1}_{\{t \geq s\}}\frac{1}{r_1 - r_2}[e^{r_1(t - s)} - e^{r_2(t - s)}].
\end{equation}
Note that in both these examples, the Green's functions $G(\cdot, t, \cdot, s)$ satisfied the conditions
\begin{equation}
\begin{gathered}
    G(\cdot, s, \cdot, t) = G(\cdot, \cdot, t - s) \qquad \qquad \qquad G(\cdot, s, \cdot, t) = 0 \ \ \text{for} \ \ t < s.
\end{gathered}
\end{equation}
The first condition is called \textit{time invariance} and it enforces that the Green's function performs a convolution in the time variable. The Green's function is time invariant because it only depends on the difference $t - s$ between the time a perturbation was applied and the time of the response. This is a common feature of memoryless systems where a perturbation at time $s$ influences a response at time $t$ in precisely the same way that a perturbation at time $s + \Delta t$ influences a response at time $t + \Delta t$ for any $\Delta t > 0$. The second condition above forces the Green's function to be zero for all $t < s$ which ensures that the Green's function is \textit{causal} in time. This means that an impulse from $f$ applied at time $s$ cannot affect the solution $u$ at an earlier time $t < s$.

If such prior knowledge about the system is available, as is the case for many diffusive processes, it is advantageous to optimize over Green's functions that satisfy these constraints. Here we use these constraints to learn the Green's function of the heat equation~\eqref{eq:heatpde} in one spatial dimension with diffusivity constant $\alpha = 0.01$ and zero initial and boundary conditions. We aim to learn the map from the function of heat sources $f(x, s)$ to the distribution of heat $u(y, t)$ which both live on domains $D_\cX = D_\cY = [0, 1] \times [0, 1]$. As discussed above, we study two Green's functions estimators
\begin{subequations}
\begin{gather}
    \widehat{G}_1(x, s, y, t) = \widehat{g}(x, y, t-s)\label{eq:g_invariant}\\
	\widehat{G}_2(x, s, y, t) = \mathbf{1}_{\{t \geq s\}}\widehat{g}(x, y, t-s)\label{eq:g_causal}
\end{gather}
\end{subequations}
where the first estimator is time invariant while the second estimator is time invariant as well as time causal. As detailed in Appendices~\ref{app:flip_symm}-~\ref{app:time_invariance}, we can learn Green's function in these two forms by designing RKHSs with reproducing kernels
\begin{subequations}
\begin{gather}
    K(x, s, y, t, \xi, \sigma, \eta, \tau) = k(x, y, t-s, \xi, \eta, \tau-\sigma)\\
	K(x, s, y, t, \xi, \sigma, \eta, \tau) = \mathbf{1}_{t \geq s}\mathbf{1}_{\tau \geq \sigma} \cdot k_\text{symm}(x, y, t-s, \xi, \eta, \tau-\sigma)
\end{gather}
\end{subequations}
respectively where $k:([0, 1]^2 \times \mathbb{R})^2 \to \mathbb{R}$ is some Mercer kernel function and the kernel $k_\text{symm}: ([0, 1]^2 \times \mathbb{R})^2 \to \mathbb{R}$ is its flip-symmetrized version given by
\begin{equation}
\begin{aligned}
	k_\text{symm}(x, y, t, \xi, \eta, \tau) = \frac{1}{4}\Big(&k(x, y, t, \xi, \eta, \tau) + k(x, y, t, \xi, \eta, -\tau)\\
	&\qquad+ k(x, y, -t, \xi, \eta, \tau) + k(x, y, -t, \xi, \eta, -\tau)\Big).
\end{aligned}
\end{equation}
In the experiments for this section, we take the base kernel
\begin{equation}
k(x, y, t, \xi, \eta, \tau) = \exp\Big(\sqrt{\frac{(x - \xi)^2}{\sigma_x^2} + \frac{(y - \eta)^2}{\sigma_y^2} + \frac{(t - \tau)^2}{\sigma_t^2}}\Big)
\end{equation}
to be exponential with lengthscales $\sigma_x = \sigma_y = \sigma_t = 0.04$.

To train our Green's function estimators, we use a small train set of 100 samples with inputs $f(x, s)$ generated by an exponential KLE with lengthscale $\ell = 0.1$ on the $[0, 1]^2$ unit square. Outputs $u(y, t)$ are simulated using a backward Euler scheme and corrupted with $20\%$ additive i.i.d. Gaussian noise. Input functions $f$ are discretized on a grid of $m_x \times m_t$ points and output functions $u$ on a grid of $m_y \times m_t$ points where $m_x = m_y = m_t = m = 50$. During training, we regularize our estimators with $\lambda = 2\times 10^{-7}$ penalty which is chosen to produce smooth Green's functions $\widehat{G}_1, \widehat{G}_2$ while still small enough to approximate the exponential growth of the true Green's function around $s = t$.

In the top of Figure~\ref{fig:heat_eq} we display cross-sections of the true and estimated Green's functions, $G_\text{Heat}$ and $\widehat{G}_1, \widehat{G}_2$ respectively, for different time intervals $t = 0, 0.25$ and $0.5$. As expected, for $t = 0$ both of our Green's function estimators attempt to approximate the continuous delta function $\delta(x - y)$ and are solely limited by the small but fixed bandwidth of their RKHS kernel $K$. For $t > 0$ we observe a good agreement with the true Green's function and correctly identify the key features of the heat equation. Namely, both estimators $\widehat{G}_1, \widehat{G}_2$ smooth the input forcings $f(x, s)$ for positive times $t > 0$, fit the zero Dirichlet boundary conditions at $y = 0$ and $y = 1$ by tending to zero at the edges, and correctly learn that the Green's function is symmetric in its spatial (x, y) coordinates due to the the positive definite property of the Laplacian operator in~\eqref{eq:heatpde}.

In the bottom of Figure~\ref{fig:heat_eq} we show how our estimators perform on 500 new test samples when we vary the lengthscale $\ell$ of the boundary condition from $0.01$ to $10.0$ and the mesh discretization $m = m_x = m_y = m_t$ from 50 to 150. Our estimators are indeed independent of mesh discretization as their relative test errors plateau quickly as we increase $m$. As with previous examples, we see that the relative errors decrease as we raise the forcing input lengthscales from $\ell = 0.1$ to $10.0$ (e.g. very smooth boundary conditions) but gradually increase if the input lengthscales becomes smaller than $\ell = 0.1$ on which our estimators were trained. Once again, this behavior is expected since the true Green's function of the heat equation grows exponentially near the line $t = s$, and hence cannot be estimated perfectly at resolutions which exceed the mesh discretization and the lengthscale of the forcing inputs in the train data. Importantly, we see in the bottom plot of Figure~\ref{fig:heat_eq} that the Green's function $\widehat{G}_2$ from~\eqref{eq:g_causal} which is causal as well as time invariant is able to generalize better on inputs of finer lengthscales compared to the Green's function $\widehat{G}_1$ from~\eqref{eq:g_invariant} which is simply time invariant but not causal. This again highlights the benefits of learning Green's functions of PDEs in RKHSs that encode physical constraints. Example test predictions of our time-invariant and time-causal Green's function estimator $\widehat{G}_2$ are shown in Appendix~\ref{app:test_pred} for input forcings of varied lengthscales.

\begin{figure}[hbp]
\centering
\includegraphics[width=\textwidth]{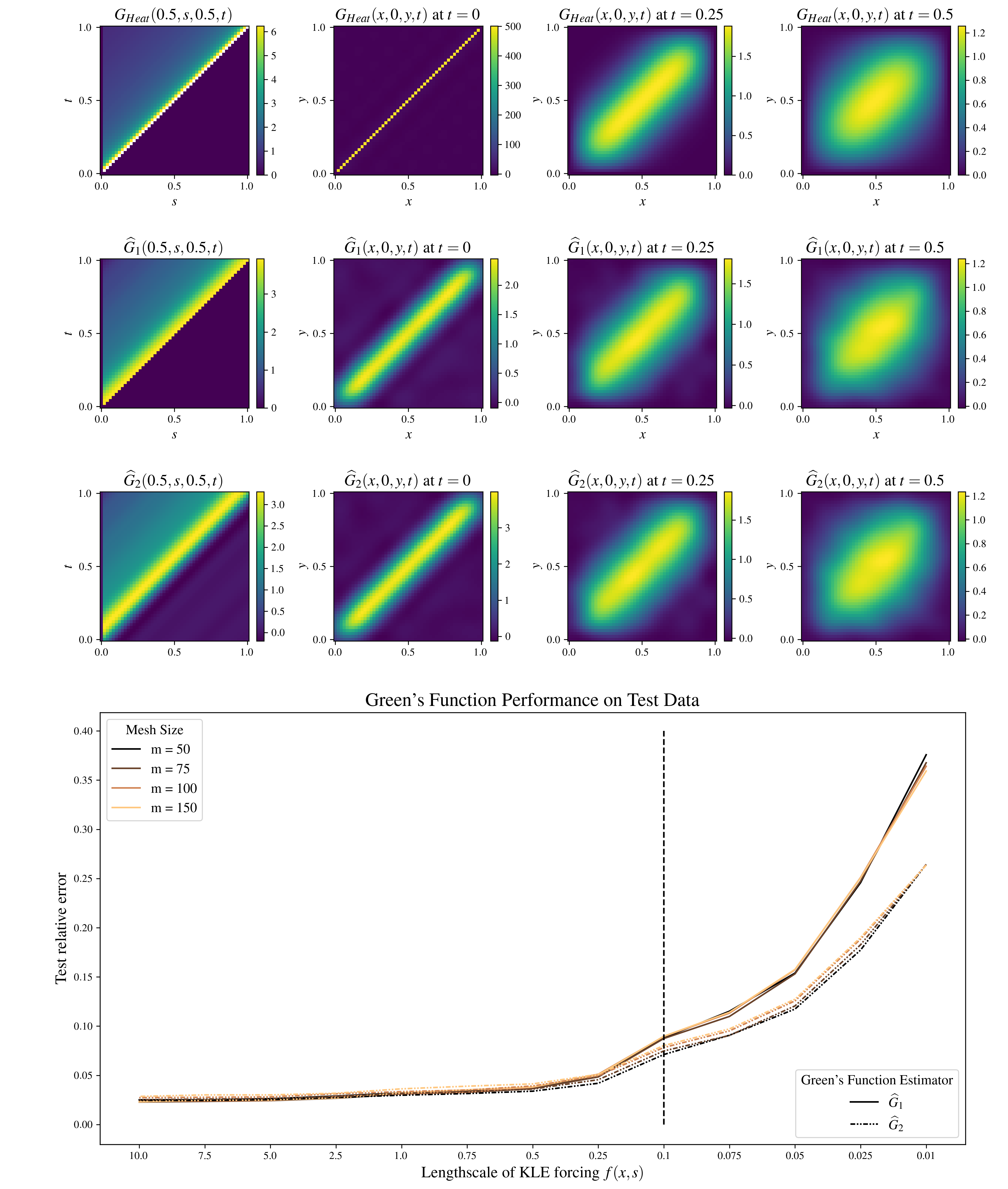}
\caption{Estimating the Green's function of the heat equation which maps a function of heat sources $f(x, s)$ to the distribution of heat $u(y, t)$. Top row shows multiple views of the true Green's function. Next two rows show the learned Green's function estimators $\widehat{G}_1$ from~\eqref{eq:g_invariant} and $\widehat{G}_2$ from~\eqref{eq:g_causal} which satisfy time invariant and causal constraints respectively. Bottom plot shows how both Green's function estimators (shown in solid and dash-dotted lines) generalize to test forcing inputs discretized on finer grids with varying levels of smoothness. The Green's function $\widehat{G}_2$ with causal constraints generalizes better to input forcings with smaller lengthscales. Black dashed line indicates the lengthscale of the training inputs $f$.}\label{fig:heat_eq}
\end{figure}

\newpage

\section{Conclusion}
In this paper, we developed a data-driven method for estimating the fundamental solution operator of a linear PDE which maps an input of the PDE (initial condition, boundary condition, or forcing) to its solution. Our method estimates the Green's function and bias term of the fundamental solution operator in an RKHS solely from input-output samples and with no detailed knowledge of the PDE. Through RKHS theory, we showed that our estimator is provably optimal on the training data and can be learned by minimizing a convex objective over a finite set of weights. We extended prior results on functional linear regression to bound the prediction error of our Green's function estimator trained on a finite number of samples. On the Poisson, Helmholtz, Schr\"{o}dinger, Fokker--Planck, and heat equation, we showed numerically that our RKHS estimators are significantly robust to noise in the training data and generalize to out-of-distribution test inputs with varying degrees of smoothness and mesh discretization.

An important direction of research is to extend our RKHS framework to learn nonlinear operators. This is necessary to model solution operators of nonlinear PDEs but also arises in linear PDEs when learning a map from a coefficient function to a solution. Recent advances in neural operators~\citep{lu2019deeponet, li2020fourier} and operator RKHS theory~\citep{kadri2016operator, nelsen2020random} have been instrumental in modeling such nonlinear maps. In light of these works, a natural extension of our framework would be the following \textit{Reproducing Kernel Network} (RKN)
\begin{equation}
    u_l(y) = \sigma_l\Big(\int_{D_{l-1}}G_l(x, y)u_{l-1}(x) \ud x + \beta_l(y)\Big) \ \ \text{for all} \ \ l = 1, \hdots, L
\end{equation}
where the input to the PDE is $f(x) = u_0(x)$ at layer $l = 0$ and the solution is modeled by $u(y) = u_L(y)$ at layer $l = L$. Here the $l$th layer takes a function from $L_2(D_{l-1})$ to $L_2(D_l)$ where it is composed of an integral operator $G_l$ in an RKHS $\cG_l \subset L^2(D_{l-1} \times D_l)$ and a bias term $\beta_l$ in an RKHS $\cB_l \subset L^2(D_l)$ as well as a pointwise activation function $\sigma_l: \mathbb{R} \to \mathbb{R}$. This general architecture is truly a continuous operator map between function spaces that can be implemented on any PDE domain geometry and irregular mesh. Furthermore, it promises various benefits including robustness to noise and mesh discretization as seen in our linear Green's function setting. Developing optimization methods and statistical guarantees for such nonlinear operators pose an interesting direction for future research.

\acks{We thank Philippe Rigollet for encouraging us to study the statistical aspects of surrogate modeling and for helpful advice and comments on our manuscript. We also thank Alasdair Hastewell, Adam Block, and J\"{orn} Dunkel for helpful discussions and feedback on our final draft as well as Nicholas Nelsen and Houman Owhadi for useful comments and for pointing out relevant related work. Lastly, we greatly appreciate the many insightful suggestions of the anonymous referees. George Stepaniants was supported by the National Science Foundation under Grant No. 1745302. We acknowledge the MIT SuperCloud and Lincoln Laboratory Supercomputing Center~\citep{reuther2018interactive} for providing HPC resources that have contributed to the numerical experiments reported within this paper. All code and experiments can be found at\\
\url{https://github.com/sgstepaniants/OperatorLearning}.}

\newpage

\appendix
\section{Experimental Details and Supplementary Results}\label{app:dataexp}
Here we give further details on our experimental setup for all experiments studied in Section~\ref{sec:examples}. For all numerical examples of the Poisson, Helmholtz, and heat equations we studied forced linear PDEs of the form
\begin{equation}
\begin{split}
    \cL u = f\\
    \cB u = 0
\end{split}
\end{equation}
where $\cL$ is a linear differential operator and $\cB$ is a linear operator which enforces the zero Dirichlet or Neumann boundary conditions of the PDE. On the examples of the Schr\"{o}dinger and Fokker--Planck equations we studied linear boundary value problems of the form
\begin{equation}
\begin{split}
    \cL u = 0\\
    \cB u = f
\end{split}
\end{equation}
where $\cB$ enforces either the initial conditions or the boundary conditions of the PDE. In both settings, we assume that $f$ and $u$ are square integrable functions in possibly different domains $D_\cX$ and $D_\cY$ respectively and are interested in learning the linear operator $\cT$ which maps $f$ to $u$.

\subsection{Data Generation}\label{app:data_gen}
For every experiment, the input functions $f(x)$ are discretized on some mesh $\{x_j\}_{j=1}^{m_x}$ of $m_x$ points. These inputs are generated using a Karhunen--Loeve expansion up to order $m_x$ given by
\begin{equation}\label{eq:KLE}
    f(x_j) = \sum_{k=1}^{m_x} Z_k\phi_k(x_j), \quad Z_k \sim \cN(0, \lambda_k)
\end{equation}
for i.i.d. normal random variables $Z_k$ where $\lambda_k, \phi_k$ are the eigenvalues and eigenvectors of a continuous, symmetric, positive definite kernel function $K(x, x')$. Note that the kernel $K$ here is used to define the distribution of our random inputs $f$, it is not to be confused with the reproducing kernel of the Green's function and bias term RKHSs.

By Mercer's theorem~\cite[Section~2, Theorem~1]{cucker2002mathematical} we know that $K$ has the spectral decomposition
\begin{equation}
    K(x, x') = \sum_{k=1}^\infty \lambda_k\phi_k(x)\phi_k(y).
\end{equation}
To compute the expansion in~\eqref{eq:KLE}, the eigenvalues and eigenfunctions of $K$ are computed numerically from the discrete matrix $\mathbf{K} \in \mathbb{R}^{m_x \times m_x}$ where $K_{ij} = K(x_i, x_j)$.

In Sections~\ref{subsec:poisson} and~\ref{subsec:helmholtz}, for the Poisson and Helmholtz equations we use the squared exponential kernel $K(x, x') = \exp(\|x - x'\|^2/2\ell^2)$ with a predefined lengthscale $\ell$ to generate all input forcings $f(x)$. Likewise, for the Fokker--Planck equation in Section~\ref{subsec:FPE} we use the squared exponential kernel to generate all initial conditions $u_0(x)$. On the example of Schr\"{o}dinger's equation in Section~\ref{subsec:schrodinger}, the random boundary conditions $b(x)$ are generated using the squared exponential \textit{periodic} kernel $K(x, x') = \exp(-2\sin(\pi\|x - x'\|/4)^2/\ell^2)$ to ensure that the boundary conditions are indeed periodic when wrapped around the boundary of the unit square. Lastly, for the heat equation in Section~\ref{subsec:heateq} we use the exponential kernel $K(x, x') = \exp(\|x - x'\|/\ell)$ to generate the input forcings $f(x)$.

\subsection{Noise Model}
Throughout all experiments, we corrupt our output samples $u(y)$ with a fraction $p$ of i.i.d. Gaussian noise. Specifically, given $n$ output samples $\mathbf{u}_1, \hdots, \mathbf{u}_n \in \mathbb{R}^{m_y}$ discretized on a mesh $\{y_k\}_{k=1}^{m_y}$ we compute the average standard deviation of all the samples
\begin{equation}
    \sigma_u = \sqrt{\frac{1}{nm_y}\sum_{i=1}^n\sum_{k=1}^{m_y}(u_{ik} - \overline{u}_k)^2}, \quad \overline{u}_k = \frac{1}{n}\sum_{i=1}^n u_{ik}
\end{equation}
and then corrupt our outputs with a fraction $p$ of i.i.d. Gaussian noise with standard deviation $\sigma_u$ by
\begin{equation}
    \tilde{u}_{ik} = u_{ik} + p\varepsilon_{ik}, \quad \varepsilon_{ik} \sim \cN(0, \sigma_u^2)
\end{equation}
for $i = 1, \hdots, n$ and $k = 1, \hdots, m_y$. Here $\varepsilon_{ik}$ are i.i.d. Gaussian random variables. This noise setup assumes that all locations in the domain $D_\cY$ of the functional outputs $u$ receive the same level of noise corruption which is a realistic model of sensor recordings where the sensor has a fixed level of noise in its measurements.

\subsection{Setting the RKHS Kernel Lengthscale Parameter}\label{app:kernel_lengthscale}
Recall from Section~\ref{sec:implementation} that given input functions $f_i(x)$ discretized on the mesh $\{x_j\}_{j=1}^{m_x}$ and output functions $u_i(y)$ discretized on the mesh $\{y_k\}_{k=1}^{m_y}$, our Green's function and bias term estimators take the form
\begin{equation}
    G_\mathbf{W}(x, y) = \sum_{j=1}^{m_x}\sum_{k=1}^{m_y} K(x, y, x_j, y_k)W_{jk}\Delta_j^x\Delta_k^y, \quad \beta_\mathbf{w}(y) = \sum_{k=1}^{m_y}Q(y, y_k)w_k \Delta_k^y
\end{equation}
where $\Delta_j^x, \Delta_k^y$ are fixed numerical quadrature weights of the input-output function grids respectively. During training, we optimize the weights $\mathbf{w}, \mathbf{W}$ of our estimators such that they minimize~\eqref{eq:opt_cost} which consists of a mean squared error term on the training data plus RKHS norm penalties on the estimators. In all experiments, the input-output grids $\{x_j\}_{j=1}^{m_x}, \{y_k\}_{k=1}^{m_y}$ are chosen to be uniform with equal spacing in all dimensions.

Here we study how the relative errors of the learned Green's function and bias term depend on the number of grid points $m$ and the lengthscale $\sigma$ of the kernels $K$ and $Q$ that define these estimators. We perform this experiment on the 1D Poisson equation from~\eqref{eq:poisson} and on the 1D Helmholtz equation with $\omega = 20$ from~\eqref{eq:helmholtz}. Our estimators are trained on 500 input-output samples with regularization $\lambda = 10^{-6}$ because we do not add noise to our data. In this experiment, $m_x = m_y = m$ and $\{x_j\}_{j=1}^{m_x}, \{y_k\}_{k=1}^{m_y} \subset [0, 1]$ since we are solving these PDEs on the unit interval. From Section~\ref{subsec:kernel_examples}, $K$ and $Q$ are both taken to be either exponential, squared exponential, Mat\'{e}rn 3/2 or Mat\'{e}rn 5/2 kernels and their lengthscales in the $x$ and $y$ directions are set equal to each other $\sigma_x = \sigma_y = \sigma$.

In Figure~\ref{fig:poisson_helmholtz_weight_sweep}, we show the relative error of the learned Green's function and bias term on the Poisson and Helmholtz equations across several different kernels, mesh sizes $m$, and kernel lengthscales $\sigma$. We see that the relation $\sigma = \frac{2}{m}$ (red dashed line) is a consistently good choice for the kernel lengthscale across all kernels types as it is quite small while still achieving a low relative error. In all our numerical examples in Section~\ref{sec:examples} we have found this to be a reliable choice of the kernel lengthscale. 

\begin{figure}[htbp]
\centering
\includegraphics[width=\textwidth]{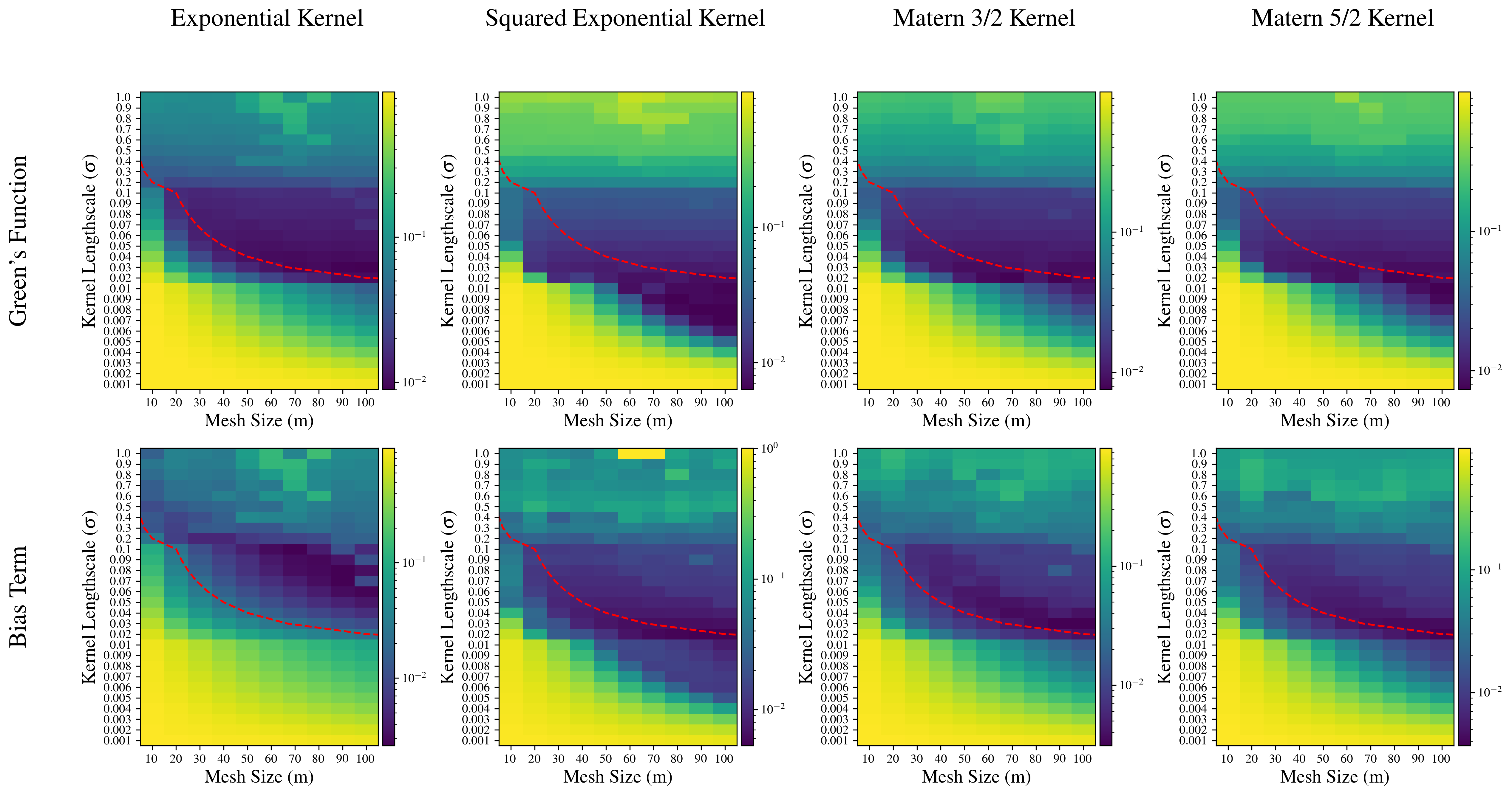}
\caption*{Sweep over kernel lengthscales for Poisson equation}
\vspace{\floatsep}
\includegraphics[width=\textwidth]{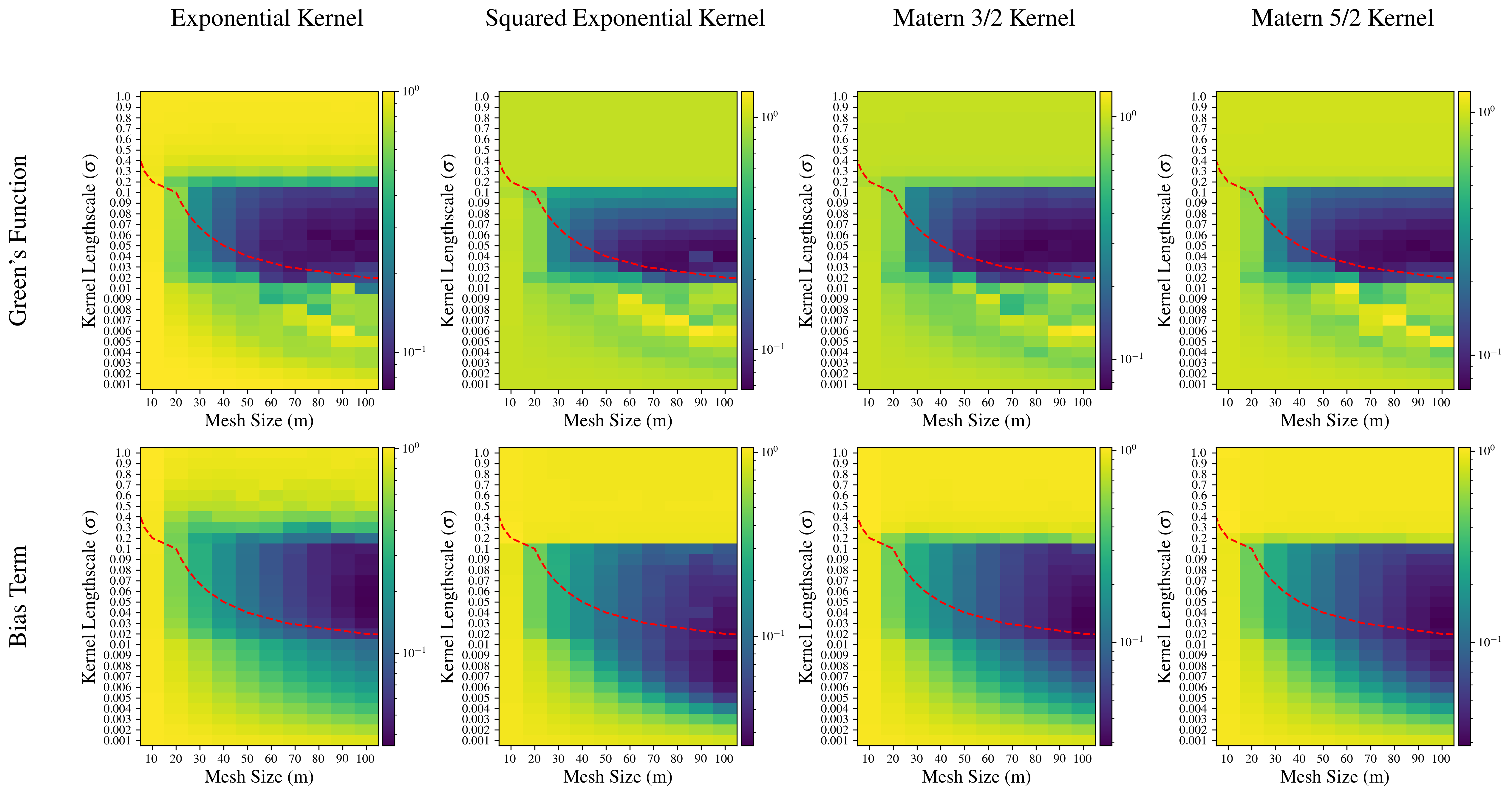}
\caption*{Sweep over kernel lengthscales for Helmholtz equation}
\caption{Each heatmap in this figure shows for a particular RKHS kernel, the relative error of the Green's function and bias term of the Poisson or Helmholtz equation learned in that RKHS. In each heatmap plot, we show how the relative error of the learned Green's function and bias term depend on the number of grid discretization points $m_x = m_y = m$ and the lengthscale $\sigma_x = \sigma_y = \sigma$ of the reproducing kernel used to construct the estimator on that grid. We find that the scaling relation $\sigma = \frac{2}{m}$ (red dashed line) is a consistently good choice for the lengthscale across all RKHS kernels as it is as small as possible while still achieving a low relative error.}\label{fig:poisson_helmholtz_weight_sweep}
\end{figure}

In general, for multidimensional problems we may be given input functions $f(x)$ with $x \in \mathbb{R}^{d_\cX}$ discretized on a uniform grid of size $m_x^1 \times \hdots m_x^{d_\cX}$ defined on a product domain $D_\cX = \Pi_{i=1}^{d_\cX} [a_i^x, b_i^x]$. Likewise, the output functions $u(y)$ with $y \in \mathbb{R}^{d_\cY}$ can be discretized on a uniform grid of size $m_y^1 \times \hdots m_y^{d_\cY}$ defined on a product domain $D_\cY = \Pi_{i=1}^{d_\cY} [a_i^y, b_i^y]$. For multidimensional problems, the kernels $K$ and $Q$ have vector-valued lengthscales which specify the width in each dimension as $[\boldsymbol{\sigma}_x, \boldsymbol{\sigma}_y] \in \mathbb{R}^{d_\cX + d_\cY}$ and $\boldsymbol{\sigma}_y \in \mathbb{R}^{d_\cY}$ respectively. Generalizing our rule above to multiple dimensions, we set the kernel lengthscales to $(\boldsymbol{\sigma}_x)_i \approx 2\frac{b_i^x-a_i^x}{m_x^i}$ for $i = 1, \hdots, d_\cX$ as well as $(\boldsymbol{\sigma}_y)_i \approx 2\frac{b_i^y-a_i^y}{m_y^i}$ for $i = 1, \hdots, d_\cY$.

\subsection{Test Predictions of Learned Green's Functions}\label{app:test_pred}
In this section, we show how Green's functions learned on the Schr\"{o}dinger, Fokker--Planck, and heat equations predict the solutions of these PDEs given finely discretized test inputs of varying lengthscales. For each lengthscale $\ell$, we generate 500 test inputs $f(x)$ at that lengthscale and simulate the corresponding solutions $u(y)$. We then show the best (lowest relative error) and worst (highest relative error) predictions of PDE solutions that are observed across those test samples at that lengthscale.

\begin{figure}[htbp]
\centering
\includegraphics[width=\textwidth]{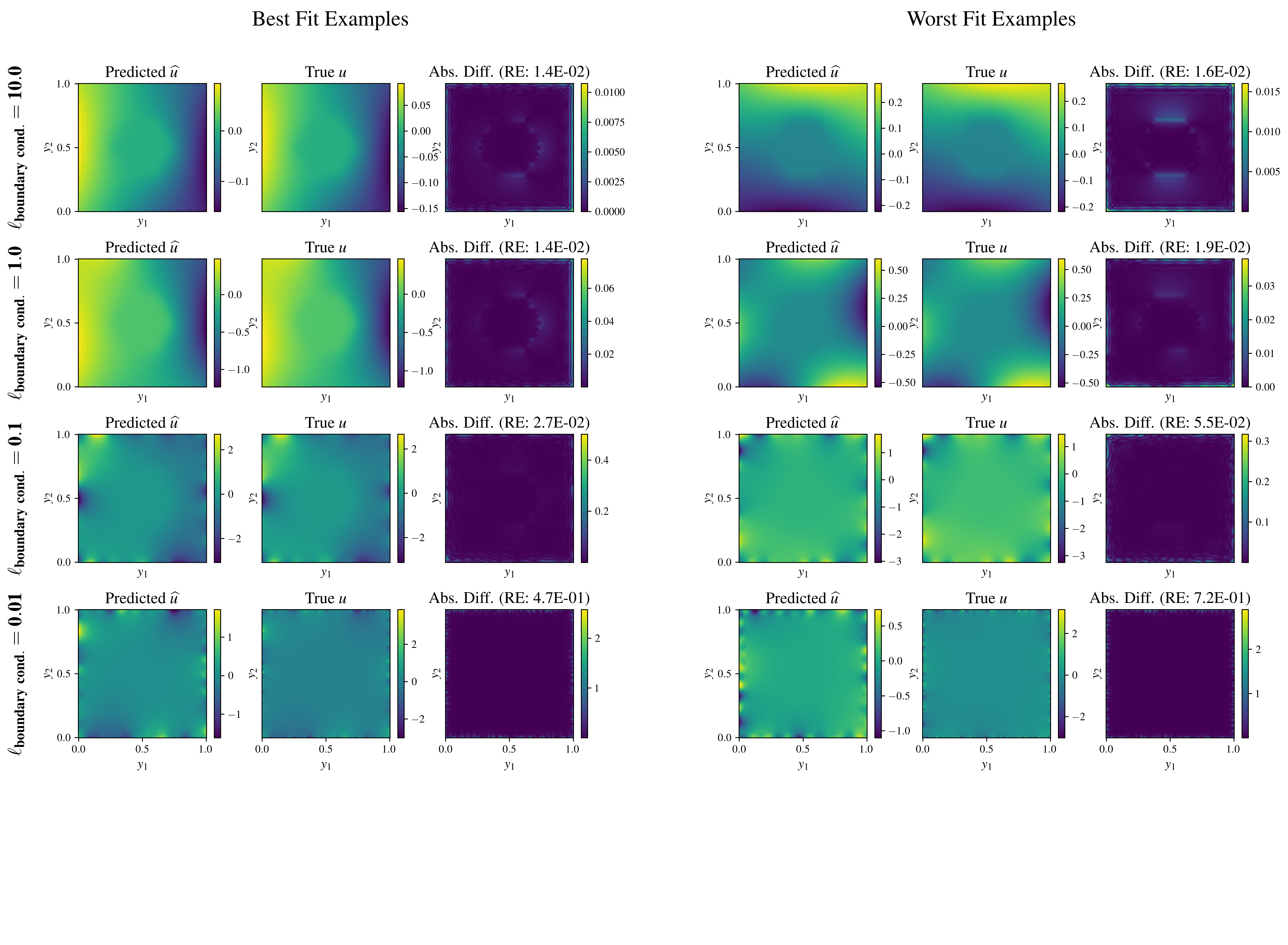}
\vspace{-2.5cm}
\caption{Example test predictions from Section~\ref{subsec:schrodinger} for the Schr\"{o}dinger equation $\Delta u - Vu = 0$ on $D = [0, 1]^2$ with $u = b$ on $\partial D$ for the linear map $G: b\mapsto u$ from boundary condition to solution. All input boundary conditions $b$ and solutions $u$ in the plots above are discretized on a grid of size $300 \times 300$.}\label{fig:schrodinger_examples}
\end{figure}

\begin{figure}[htbp]
\centering
\includegraphics[width=\textwidth]{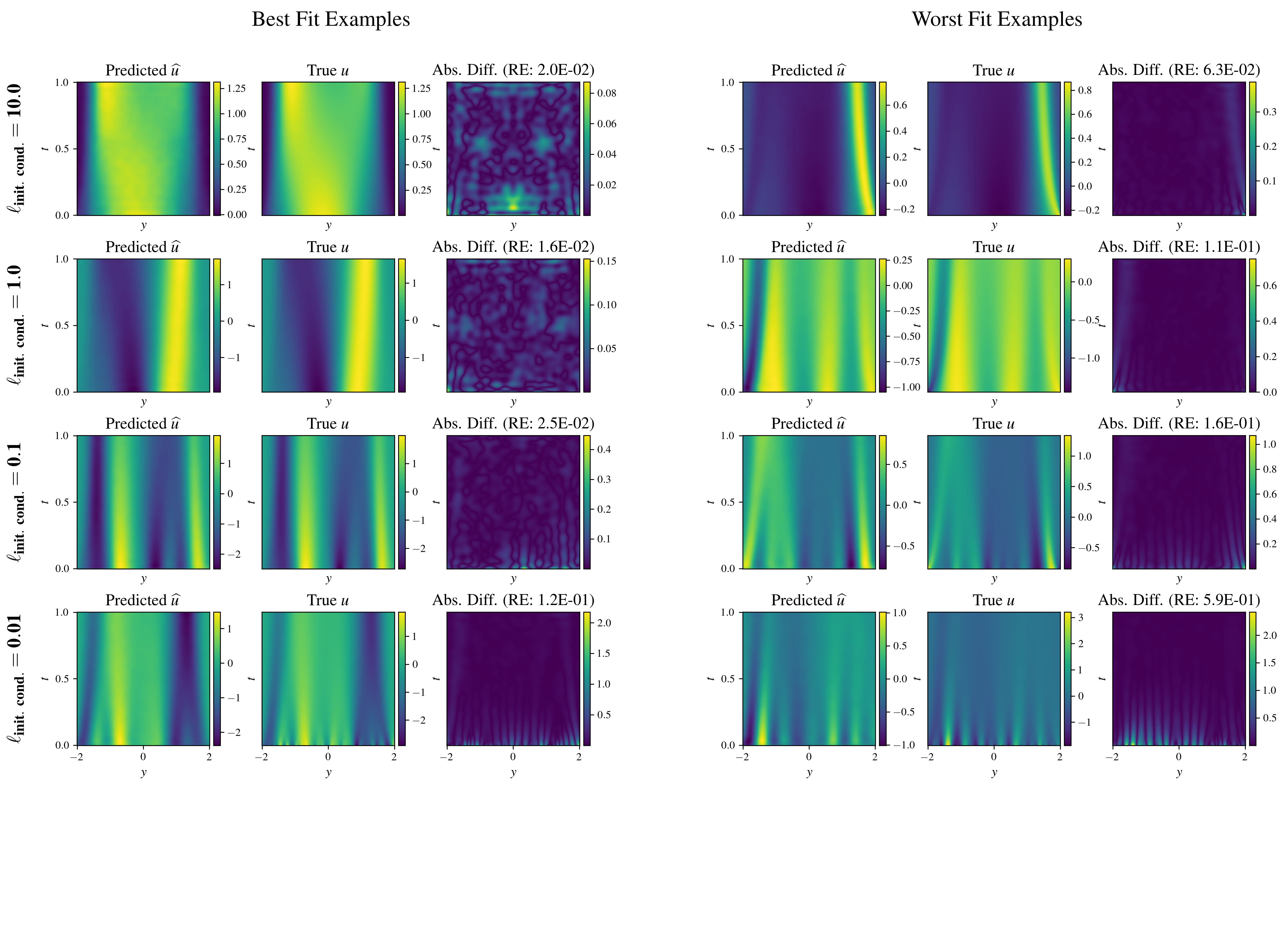}
\vspace{-2.5cm}
\caption{Example test predictions from Section~\ref{subsec:FPE} of $\widehat{G}: u_0 \mapsto u$ for the Fokker--Planck equation $\partial_t u = \nabla \cdot (u\nabla V) + \alpha\Delta u$ in one space and time dimension on the domain $(x, t) \in D = [-2, 2] \times [0, 1]$. The input initial conditions $u_0(x)$ are discretized on 150 points in [-2, 2] while the solutions plotted above on the domain $D$ are discretized on a space-time grid of $300 \times 300$ points.}\label{fig:fplanck_examples}
\end{figure}

\begin{figure}[htbp]
\centering
\includegraphics[width=\textwidth]{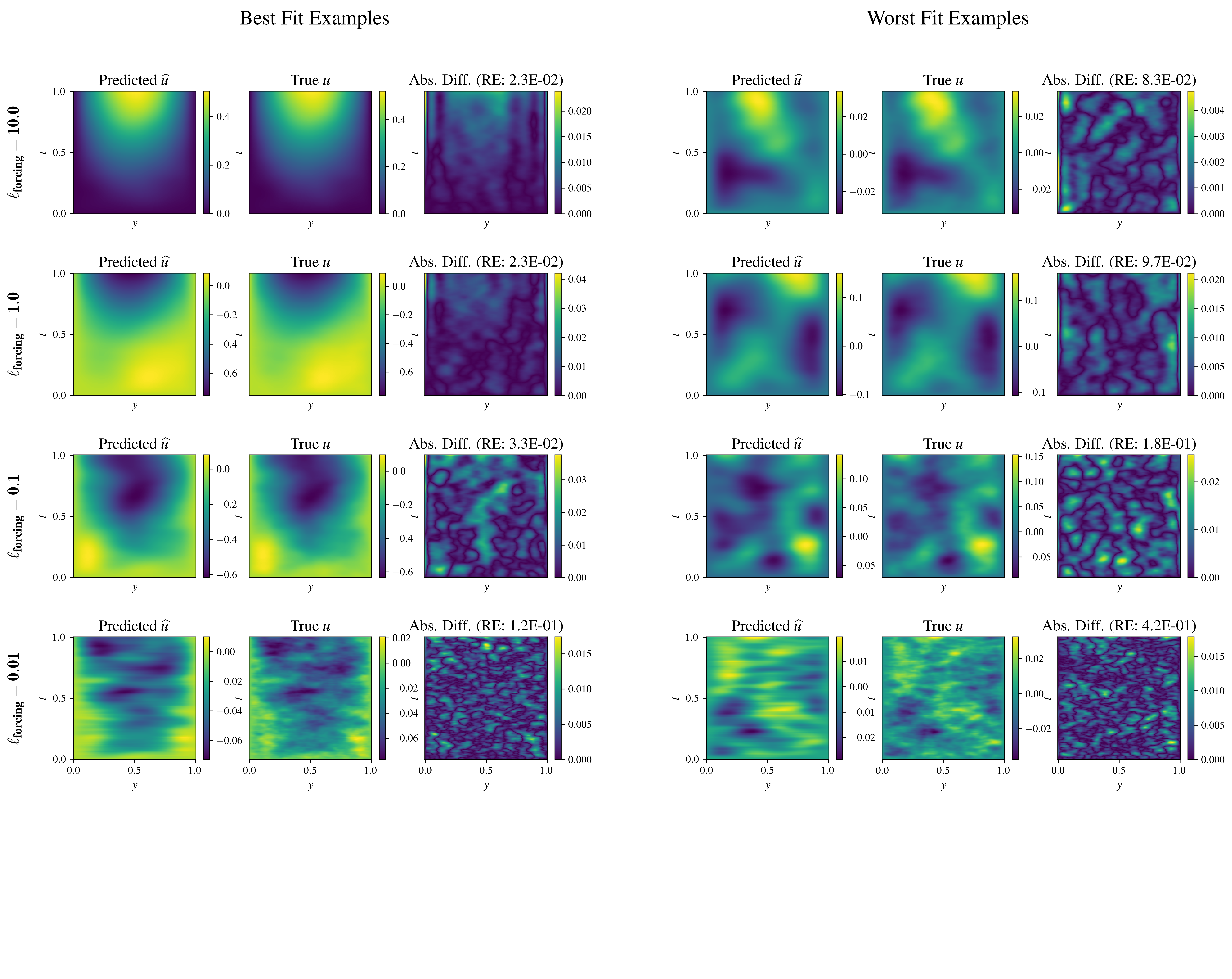}
\vspace{-2.5cm}
\caption{Example test predictions from Section~\ref{subsec:heateq} of $\widehat{G}_2: f \mapsto u$ for the heat equation $\partial_t u = \alpha\Delta u$ with $\alpha = 0.01$ in one space and time dimension on the domain $(x, t) \in D = [0, 1] \times [0, 1]$. The input forcings $f(x, s)$ and output solutions $u(y, t)$ on the domain $D$ plotted above are discretized on a space-time grid of $150 \times 150$ points.}\label{fig:heateq_examples}
\end{figure}

\newpage

\section{Representer Theorems}\label{app:repthm}
At the start of Section~\ref{sec:repthm}, we began by proving a simple representer theorem for our Green's function estimator when the functional data given to us was discretized on a set of grid points. Deriving the closed-form of the Green's function estimator relied on a key result that we restate and prove here.
\begin{theorem}
    Suppose we are given an RKHS $\cH \subset L^2(D)$ on a domain $D$ with continuous, symmetric, and strictly positive definite kernel $K: D^2 \to \mathbb{R}$. We denote the RKHS Hilbert space norm of any function $f \in \cH$ by $\|f\|_\cH$. Take any finite set of $m$ points $\{x_k\}_{k=1}^m \subset D$ in the domain and $n$ weight vectors $\{\mathbf{a}_i\}_{i=1}^n \subset \mathbb{R}^m$. We denote $\mathbf{f} = (f(x_1), \hdots, f(x_m))^T \in \mathbb{R}^m$ as the function $f$ evaluated on the set of grid points. Finally, let $\psi: \mathbb{R}_+ \to \mathbb{R}$ be a strictly increasing real-valued function and let $\cL: \mathbb{R}^n \to \mathbb{R}$ be an arbitrary error function. Then any minimizer
    \begin{equation}
        \widehat{f} \in \argmin_{f \in \cH} L(\{\inprod{\mathbf{f}, \mathbf{a}_i}_2\}_{i=1}^n) + \psi(\|f\|_\cH)
    \end{equation}
    must be of the form
    \begin{equation}
        \widehat{f}(x) = \sum_{k=1}^m K(x, x_k)w_k, \quad \mathbf{w} \in \spn(\{\mathbf{a}_i\}_{i=1}^n) \subset \mathbb{R}^m.
    \end{equation}
\end{theorem}

We assume here for simplicity that $K$ is a strictly positive definite kernel although the proof of this theorem can be easily extended to degenerate RKHSs. 
\begin{proof}
    We begin by defining two vector spaces
    \begin{equation}
        V = \spn(\{\mathbf{a}_i\}_{i=1}^n), \quad V^\perp = \{\mathbf{v} \in \mathbb{R}^m: \mathbf{v}^T\mathbf{K}\mathbf{w} = 0, \ \forall \mathbf{w} \in V\}
    \end{equation}
    where $\mathbf{K} = \{K(x_i, x_j)\}_{i, j = 1}^m \in \mathbb{R}^{m \times m}$. Because $\mathbf{K}$ is strictly positive definite, we know that $V \oplus V^\perp = \mathbb{R}^m$.
    
    Since our objective function depends only on $f \in \cH$ evaluated at the points $x_1, \hdots, x_m$ then we know by the standard representer theorem that $\widehat{f}$ must take the form
    \begin{equation}
        \widehat{f}(x) = \sum_{k=1}^m K(x, x_k)w_k
    \end{equation}
    where $\mathbf{w} = (w_1, \hdots, w_m)^T \in \mathbb{R}^m$. Now we can decompose our weight vector as $\mathbf{w} = \mathbf{w}^\parallel + \mathbf{w}^\perp$ for $\mathbf{w}^\parallel \in V$ and $\mathbf{w}^\perp \in V^\perp$ to write
    \begin{equation}
        \widehat{f}(x) = \sum_{k=1}^m K(x, x_k)w_k^\parallel + \sum_{k=1}^m K(x, x_k)w_k^\perp.
    \end{equation}
    Using the definition of the RKHS norm we can write out
    \begin{equation}
    \begin{aligned}
        \|\widehat{f}\|_\cH^2 &= (\mathbf{w}^\parallel)^T\mathbf{K}\mathbf{w}^\parallel + (\mathbf{w}^\perp)^T\mathbf{K}\mathbf{w}^\perp + 2(\mathbf{w}^\perp)^T\mathbf{K}\mathbf{w}^\parallel\\
        &= (\mathbf{w}^\parallel)^T\mathbf{K}\mathbf{w}^\parallel + (\mathbf{w}^\perp)^T\mathbf{K}\mathbf{w}^\perp
    \end{aligned}
    \end{equation}
    where the final term $(\mathbf{w}^\perp)^T\mathbf{K}\mathbf{w}^\parallel$ is zero since $\mathbf{w}^\perp \in V^\perp$.
    
    Additionally, writing as shorthand $\widehat{\mathbf{f}} = (\widehat{f}(x_1), \hdots, \widehat{f}(x_m))^T \in \mathbb{R}^m$ we see for each $i = 1, \hdots, n$ that
    \begin{equation}
        \inprod{\widehat{\mathbf{f}}, \mathbf{a}_i}_2 = \mathbf{a}_i^T\widehat{\mathbf{f}} = \mathbf{a}_i^T\mathbf{K}\mathbf{w}^\parallel + \mathbf{a}_i^T\mathbf{K}\mathbf{w}^\perp = \mathbf{a}_i^T\mathbf{K}\mathbf{w}^\parallel
    \end{equation}
    where again the last equality follows since $\mathbf{a}_i \in V$ and $\mathbf{w}^\perp \in V^\perp$. Finally, we can write our objective function as
    \begin{equation}
        L(\{\inprod{\mathbf{f}, \mathbf{a}_i}_2\}_{i=1}^n) + \psi(\|f\|_\cH) = L(\{\mathbf{a}_i^T\mathbf{K}\mathbf{w}^\parallel\}_{i=1}^n) + \psi(\sqrt{(\mathbf{w}^\parallel)^T\mathbf{K}\mathbf{w}^\parallel + (\mathbf{w}^\perp)^T\mathbf{K}\mathbf{w}^\perp}).
    \end{equation}
    Since $\psi$ is strictly increasing this proves that the optimal $\mathbf{w} \in \mathbb{R}^m$ must have $\mathbf{w}^\perp = 0$ implying that
    \begin{equation}
        \mathbf{w} = \mathbf{w}^\parallel \in V = \spn(\{\mathbf{a}_i\}_{i=1}^n).
    \end{equation}
\end{proof}
As an important note, from the theorem above we can recover the traditional representer theorem by setting $n = m$ and $\mathbf{a}_i = \mathbf{e}_i$ for all $i = 1, \hdots, n$.~\\

Now we lay the groundwork and derive the Green's function representer theorem stated in Theorem~\ref{thm:representer} for functional (non-discrete) input-output samples.
\subsection*{Proof of Operator Representer Theorem}
We state a more general representer theorem than that which is given in Theorem~\ref{thm:representer}. It holds for any operator (function-valued) RKHS with arbitrary loss function and regularization term. It is a generalization of the representer theorem proven in~\citet[Appendix~B, Theorem~9]{kadri2016operator}. The original theory of operator RKHSs and the corresponding representer theorem was first presented in the seminal work of~\citet[Section~4, Theorem~4.1]{micchelli2005learning}. We follow an alternative derivation of their result using a similar analysis to that of~\citet[Section~1.3, Theorem~1.3.1]{wahba1990spline}.

To establish notation, we let $\cX$ be a separable Hilbert space of functions from $D_\cX \to \mathbb{R}$ and we let $\cY$ be a separable Hilbert space of functions from $D_\cY \to \mathbb{R}$. We denote $\cL(\cY)$ as the space of bounded linear operators from $\cY$ to $\cY$. First we define an RKHS over operators as posed by~\citet[Section~4, Definitions~3, 5]{kadri2016operator}.

\begin{definition}[Operator-valued kernel]\label{def:operatorkernel}
An operator-valued kernel is a function $K: \cX \times \cX \to \cL(\cY)$ satisfying
\begin{enumerate}[(i)]
    \item $K$ is Hermitian if $\forall f, g \in \cX$ we have that $K(f, g) = K(g, f)^*$ where $*$ denotes the adjoint operator.
    \item $K$ is nonnegative (positive semidefinite) on $\cX$ if it is Hermitian and for all $r \in \mathbb{N}$ and any $\{(f_i, u_i)\}_{i=1}^r \subset \cX \times \cY$ we have that the matrix $M \in \mathbb{R}^{r \times r}$ with $M_{ij} = \inprod{K(f_i, f_j)u_i, u_j}_\cY$ is positive semidefinite.
\end{enumerate}
\end{definition}

\begin{definition}[Operator RKHS]
Let $\cO$ be a Hilbert space of operators $O: \cX \to \cY$ with inner product $\inprod{\cdot, \cdot}_\cO$. We call $\cO$ an operator RKHS if there exists an operator-valued kernel $K: \cX \times \cX \to \cL(\cY)$ such that
\begin{enumerate}[(i)]
    \item The function $g \mapsto K(f, g)u$ for $g \in \cX$ belongs to $\cO$ for all $f \in \cX, u \in \cY$.
    \item $K$ satisfies the \underline{reproducing property}
    \begin{equation}
        \inprod{O, K(f, \cdot)u}_\cO = \inprod{O(f), u}_\cY
    \end{equation}
    for all $O \in \cO$ and $f \in \cX, u \in \cY$.
\end{enumerate}
\end{definition}

We assume now that the operator RKHS $\cO$ can be decomposed orthogonally into $\cO = \cO_0 \oplus \cO_1$ where $\cO_0$ is a finite-dimensional Hilbert space spanned by the operators $\{E_k\}_{k=1}^r$ and $\cO_1$ is its orthogonal complement under the inner product $\inprod{\cdot, \cdot}_\cO$. We denote the inner product $\inprod{\cdot, \cdot}_\cO$ restricted to $\cO_0, \cO_1$ as $\inprod{\cdot, \cdot}_{\cO_0}, \inprod{\cdot, \cdot}_{\cO_1}$ respectively.

\begin{theorem}\label{thm:genrepresenter}
    Let $\psi: \mathbb{R}_+ \to \mathbb{R}$ be a strictly increasing real-valued function and let $\cL: (\cX \times \cY \times \cY) \to \mathbb{R}$ be an arbitrary error function. Then any minimizer
    \begin{equation}\label{eq:genOmin}
        \widehat{O}_{n\psi} \in \argmin_{O \in \cO}\Big[\cL\Big(\{(f_i, u_i, O(f_i))\}_{i=1}^n\Big) + \psi(\|\proj_{\cO_1}O\|_{\cO_1})\Big]
    \end{equation}
    must be of the form
    \begin{equation}\label{eq:genrepresenter}
        \widehat{O}_{n\psi}(x, y) = \sum_{k=1}^r d_kE_k + \sum_{i=1}^n K(f_i, \cdot)c_i
    \end{equation}
    for some $\mathbf{d} \in \mathbb{R}^r$ and $c_i \in \cY, i \in [n]$.
\end{theorem}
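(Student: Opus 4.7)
The plan is to extend the classical representer-theorem argument to the operator-valued setting, combining the given orthogonal split $\cO = \cO_0 \oplus \cO_1$ with a second orthogonal split of $\cO_1$ against the data-indexed kernel sections.

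First I would introduce the candidate subspace
\begin{align*}
    V := \Big\{\sum_{i=1}^n K_1(f_i, \cdot)\, c_i \,:\, c_i \in \cY\Big\} \subseteq \cO_1
\end{align*}
and identify its orthogonal complement in $\cO_1$. The reproducing property yields $\inprod{O, K_1(f_i, \cdot) c}_{\cO_1} = \inprod{O(f_i), c}_\cY$ for all $O \in \cO_1$ and $c \in \cY$, so $O \in V^\perp$ if and only if $O(f_i) = 0$ for every $i \in [n]$. This is the crux of the argument, since it means point evaluation at the data is insensitive to the $V^\perp$-component.

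Next I would take an arbitrary minimizer $\widehat{O}_{n\psi}$ and decompose it as $\widehat{O}_{n\psi} = O_0 + O_V + O_\perp$ with $O_0 \in \cO_0$, $O_V \in \overline{V}$, and $O_\perp \in V^\perp \cap \cO_1$. Since $O_\perp$ vanishes at each $f_i$, the trimmed candidate $\widetilde{O} := O_0 + O_V$ produces the same $n$ output values as $\widehat{O}_{n\psi}$ and therefore the same loss $\mathcal{L}$. By Pythagoras,
\begin{align*}
    \|\proj_{\cO_1}\widehat{O}_{n\psi}\|_{\cO_1}^2 = \|O_V\|_{\cO_1}^2 + \|O_\perp\|_{\cO_1}^2 \geq \|\proj_{\cO_1}\widetilde{O}\|_{\cO_1}^2,
\end{align*}
with equality iff $O_\perp = 0$. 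Strict monotonicity of $\psi$ then turns any nonzero $O_\perp$ into a strict improvement by passing to $\widetilde{O}$, contradicting minimality; hence $O_\perp = 0$ and $\widehat{O}_{n\psi} \in \cO_0 \oplus \overline{V}$. Expanding $O_0$ in the finite basis $\{E_k\}_{k=1}^r$ recovers the first sum in~\eqref{eq:genrepresenter}.

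The most delicate remaining step, and the main technical obstacle, is upgrading ``$O_V \in \overline{V}$'' to an honest finite sum in $V$ as stated in~\eqref{eq:genrepresenter}. I would analyze the bounded linear operator $L: \cY^n \to \cO_1$ given by $L(c_1, \ldots, c_n) = \sum_{i=1}^n K_1(f_i, \cdot) c_i$, whose range is exactly $V$ and whose adjoint is $L^*(O) = (O(f_1), \ldots, O(f_n))$. By the closed range theorem, $V$ is closed in $\cO_1$ iff the block Gram operator $L^*L$ on $\cY^n$ with entries $K_1(f_i, f_j) \in \cL(\cY)$ has closed range, a mild nondegeneracy condition on $(f_1, \ldots, f_n)$. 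Once this closedness is in hand, $\overline{V} = V$ and one recovers $O_V = \sum_{i=1}^n K_1(f_i, \cdot) c_i$ for some $c_i \in \cY$, completing the representation.
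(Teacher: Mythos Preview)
Your approach is essentially the same as the paper's: decompose any $O \in \cO$ orthogonally into its $\cO_0$-part, its component along the data-indexed kernel sections, and a remainder orthogonal to both; use the reproducing property to show the remainder vanishes at every $f_i$; then invoke strict monotonicity of $\psi$ to force the remainder to zero. The paper phrases this via Riesz representers $N_i^v$ of the evaluation functionals $O \mapsto \inprod{O(f_i), v}_\cY$ and shows $\proj_{\cO_1} N_i^v = K_1(f_i,\cdot)v$, but the content is identical to your direct use of the reproducing property on $\cO_1$.

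You are in fact more careful than the paper on one point. The paper simply asserts the decomposition $O = \sum_k d_k E_k + \sum_i K_1(f_i,\cdot)c_i + Q$ without addressing whether the middle term---the projection of $\proj_{\cO_1}O$ onto $\overline{V}$---can genuinely be written as a \emph{finite} sum $\sum_i K_1(f_i,\cdot)c_i$ with $c_i \in \cY$, i.e.\ whether $V$ is closed. In the scalar case $\cY = \mathbb{R}$ this is automatic since $V$ is finite-dimensional, but in the operator-valued setting $V$ is the range of a bounded operator from $\cY^n$ and need not be closed. Your closed-range argument is the right tool, but it does introduce a hypothesis on the block Gram operator $\big(K_1(f_i,f_j)\big)_{i,j}$ that neither the theorem statement nor the paper's proof makes explicit; the paper's proof has the same implicit gap.
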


\begin{proof}[Theorem~\ref{thm:genrepresenter}]\\
Define the linear functional $L_i^v: \cO \to \mathbb{R}$ for any $v \in \cY, i \in [n]$ where $L_i^vO = \inprod{O(f_i), v}_\cY$. Note that for each $v \in \cY$ we have that $L_i^v$ is continuous (bounded) because
\begin{equation}
\begin{aligned}
|L_i^vO| &= |\inprod{O(f_i), v}_\cY| = |\inprod{O, K(f_i, \cdot)v}_\cO| \leq \|O\|_\cO\|K(f_i, \cdot)v\|_\cO\\
&= \|O\|_\cO\sqrt{\inprod{K(f_i, \cdot)v, K(f_i, \cdot)v}_\cO} = \|O\|_\cO\sqrt{\inprod{K(f_i, f_i)v, v}_\cY}\\
&\leq \|v\|_\cY\|K(f_i, f_i)\|_\text{op}\|O\|_\cO
\end{aligned}
\end{equation}
Note that the operator norm $\|K(f_i, f_i)\|_\text{op} < \infty$ because by definition $K(f_i, f_i) \in \cL(\cY)$ is bounded and $\|v\|_\cY < \infty$ since $v \in \cY$.

Therefore, by the Riesz representation theorem we know that for all $i \in [n], v \in \cY$ there exists a representer $N_i^v \in \cG$ for $L_i^v$ such that
\begin{equation}
\langle N_i^v, O\rangle_\cO = L_i^vO.
\end{equation}
for every $O \in \cO$.

Define $\Xi_i^v = \proj_{\cO_1}N_i^v \in \cO_1$ for all $i \in [n]$ and $v \in \cY$. Then for every $f \in \cX, u \in \cY$ we can write
\begin{equation}
\begin{aligned}
    \inprod{\Xi_i^v(f), u}_\cY &= \inprod{\Xi_i^v, K(f, \cdot)u}_{\cO_1} = \inprod{\Xi_i^v, K(f, \cdot)u}_\cO = \inprod{\proj_{\cO_1}N_i^v, K(f, \cdot)u}_\cO\\
    &= \inprod{N_i^v, \proj_{\cO_1}K(f, \cdot)u}_\cO = \inprod{N_i^v, K(f, \cdot)u}_\cO = L_i^v\Big(K(f, \cdot)u\Big)\\
    &= \inprod{K(f, f_i)u, v}_\cY = \inprod{K(f_i, f)v, u}_\cY.
\end{aligned}
\end{equation}
We have used the fact above that the projection operator $\proj_{\cO_1}$ is self-adjoint and that $K(f, f_i)^* = K(f_i, f)$ by definition of an operator reproducing kernel. Since the equality above holds for all $f \in \cX, u \in \cY$ this proves that $\Xi_i^v = K(f_i, \cdot)v$ for all $v \in \cY$.

Now let $Q: \cX \to \cY \in \cO$ be any operator perpendicular in the norm of $\cO$ to the subspace of $\cO$ spanned by $\{E_k\}_{k=1}^r$ and $\{\Xi_i^v: i \in [n], v \in \cY\}$. Note that $\spn\{\Xi_i^v: i \in [n], v \in \cY\} = \{K(f_i, \cdot)v: v \in \cY\}$. Hence, for any $\mathbf{d} \in \mathbb{R}^r$ and $c_i \in \cY, i \in [n]$ we can decompose any $O \in \cO$ as
\begin{equation}\label{eq:first_representer}
    O = \sum_{k=1}^r d_kE_k + \sum_{i=1}^n K(f_i, \cdot)c_i + Q.
\end{equation}

Since $Q$ is orthogonal to the span of $\{E_k\}_{k=1}^r$ this immediately implies that $Q \in \cO_1$. Hence, for any $i \in [n], v \in \cY$,
\begin{equation}
    \inprod{Q(f_i), v}_\cY = L_i^vQ = \inprod{N_i^v, Q}_\cO = \inprod{\Xi_i^v, Q}_{\cO_1} = 0.
\end{equation}
This proves that $Q(f_i) = 0$ so
\begin{equation}
    O(f_i) = \sum_{k=1}^r d_kE_k(f_i) + \sum_{i=1}^n K(f_i, f_i)c_i
\end{equation}
does not depend on $Q$. Lastly, we can see that $\proj_{\cO_1}O = \sum_{i=1}^n K(f_i, \cdot)c_i + Q$ where $\sum_{i=1}^n K(f_i, \cdot)c_i$ and $Q$ are orthogonal under the $\cO_1$ inner product.

Then, the objective of~\eqref{eq:genOmin} with the representation of $O$ in~\eqref{eq:first_representer} becomes
\begin{equation}
\begin{aligned}
    &\cL\Big(\{(f_i, u_i, O(f_i))\}_{i=1}^n\Big) + \psi\Big(\|\proj_{\cO_1}O\|_{\cO_1}\Big)\\
    &= \cL\Big(\{(f_i, u_i, O(f_i))\}_{i=1}^n\Big) + \psi\Big(\sqrt{\Big\|\sum_{i=1}^n K(f_i, \cdot)c_i\Big\|_{\cO_1}^2 + \|Q\|_{\cO_1}^2}\Big).
\end{aligned}
\end{equation}

Clearly, the optimal choice of $Q$ is zero since $\Psi$ is strictly increasing so this proves that the minimizer of the objective in~\eqref{eq:genOmin} must be of the form
\begin{equation}\label{eq:optO}
    \widehat{O}_{n\psi} = \sum_{k=1}^r d_kE_k + \sum_{i=1}^n K(f_i, \cdot)c_i.
\end{equation}
This completes the proof of the representer theorem for operator RKHSs $\cO = \cO_0 \oplus \cO_1$.
\end{proof}~\\

\begin{remark}
    Note that the space of Green's function integral operators
    \begin{equation}
        \cO = \Big\{O_G(f) := \int_{D_{\cX}} G(x,\cdot )f(x) \ud x \Big| G \in \cG\Big\}
    \end{equation}
    for some RKHS $\cG \subset L^2(D_\cX \times D_\cY)$ actually defines an operator RKHS. This is because we can make $\cO$ a Hilbert space by defining on it the induced inner product
    \begin{equation}
        \inprod{O_F, O_G}_\cO = \inprod{F, G}_\cG.
    \end{equation}
    Furthermore, given that $\cG$ has the continuous reproducing kernel $K: (D_\cX \times D_\cY)^2 \to \mathbb{R}$, it is easy to check that the operator-valued reproducing kernel for $\cO$ is $\cK: L^2(D_\cX) \times L^2(D_\cX) \to \cL(L^2(D_\cY))$ given by
    \begin{equation}
        [\cK(f, g)u](y) = \int_{D_\cX}\int_{D_\cX}\int_{D_\cY}K(x, y, \xi, \eta)g(x)f(\xi)u(\eta)
    \end{equation}
    for all $f, g \in L^2(D_\cX)$ and $u \in L^2(D_\cY)$. Note that $\cK(f, g)$ is always a bounded linear operator since $K$ is continuous on the bounded set $(D_\cX \times D_\cY)^2$. Using the definition of $\cK$ we can write for any $O_F \in \cO$,
    \begin{equation}
    \begin{aligned}
        \inprod{O_F, \cK(f, \cdot)u} &= \biginprod{F, \int_{D_\cX}\int_{D_\cY}K(\cdot, \cdot, \xi, \eta)f(\xi)u(\eta)\ud\xi\ud\eta}_\cG\\
        &= \int_{D_\cX}\int_{D_\cY}\inprod{F, K_{(\xi, \eta)}}_\cG f(\xi)u(\eta) \ud\xi\ud\eta\\
        &= \int_{D_\cX}\int_{D_\cY}F(\xi, \eta) f(\xi)u(\eta) \ud\xi\ud\eta\\
        &= \inprod{O_F(f), u}_{L^2(D_\cY)}.
    \end{aligned}
    \end{equation}
    Hence, the operator representer theorem derived above is immediately applicable to the setting of Theorem~\ref{thm:representer}.
\end{remark}

In the following corollary, we derive an expression for the closed form of these weights $d_k, c_i$ in the case of ridge regression. Throughout, we denote $\cY^n$ as the Hilbert space of vector-valued functions where every $\mathbf{u} \in \cY^n$ is a vector of functions $(u_1, \hdots, u_n)^T$ with each $u_i \in \cY^n$. For all $h, g \in \cY^n$ we define the inner product $\inprod{\mathbf{h}, \mathbf{g}}_{\cY^n} = \sum_{i=1}^n \inprod{h_i, g_i}_\cY$. Similar to before, $\cL(\cY^n)$ denotes the space of bounded linear operators from $\cY^n$ to $\cY^n$.
\begin{cor}\label{cor:closed_form}
    Define the positive semidefinite self-adjoint linear operators $\cM, \cM_\lambda \in \cL(\cY^n)$ for all $\mathbf{h} \in \cY^n$ by
    \begin{equation}
    \begin{split}
        [\cM_\lambda(\mathbf{h})]_i &= [\cM(\mathbf{h})]_i + n\lambda h_i\\
        [\cM(\mathbf{h})]_i &= \sum_{j=1}^n K(f_j, f_i)h_j.
    \end{split}
    \end{equation}
    Any minimizer of
    \begin{equation}\label{eq:Omin}
        \widehat{O}_{n\lambda} \in \argmin_{O \in \cO}\Big[\frac{1}{n}\sum_{i=1}^n\Big\|u_i - O(f_i))\Big\|_\cY^2 + \lambda\|\proj_{\cO_1}O\|_{\cO_1}^2\Big]
    \end{equation}
    must be of the form
    \begin{equation}\label{eq:representer}
        \widehat{O}_{n\lambda}(x, y) = \sum_{k=1}^r d_kE_k + \sum_{i=1}^n K(f_i, \cdot)c_i
    \end{equation}
    Assuming that the matrix $\mathbf{A} \in \mathbb{R}^{r \times r}$ defined by
    \begin{equation}
        \mathbf{A} = \begin{bmatrix}
        \inprod{\mathbf{T}^1, \cM_\lambda^{-1}(\mathbf{T}^1)}_{\cY^n} & \hdots & \inprod{\mathbf{T}^1, \cM_\lambda^{-1}(\mathbf{T}^r)}_{\cY^n}\\
        \vdots & & \vdots\\
        \inprod{\mathbf{T}^r, \cM_\lambda^{-1}(\mathbf{T}^1)}_{\cY^n} & \hdots & \inprod{\mathbf{T}^r, \cM_\lambda^{-1}(\mathbf{T}^r)}_{\cY^n}
        \end{bmatrix}.
    \end{equation}
    is invertible, an optimal choice of the weights $\mathbf{d} \in \mathbb{R}^n, \mathbf{c} \in \cY^n$ is
    \begin{equation}
        \mathbf{c} = \cM_\lambda^{-1}(\mathbf{u}) - \sum_{k=1}^rd_k\cM_\lambda^{-1}(\mathbf{T}^k), \quad \mathbf{d} = \mathbf{A}^{-1}\begin{bmatrix}
        \inprod{\mathbf{T}^1, \cM_\lambda^{-1}(\mathbf{u})}_{\cY^n}\\
        \vdots\\
        \inprod{\mathbf{T}^r, \cM_\lambda^{-1}(\mathbf{u})}_{\cY^n}
        \end{bmatrix}
    \end{equation}
\end{cor}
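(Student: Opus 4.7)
The plan is to apply Theorem~\ref{thm:genrepresenter} to obtain the representer form for $\widehat{O}_{n\lambda}$, substitute it into the objective in~\eqref{eq:Omin}, and then derive the stated expressions for $\mathbf{d}$ and $\mathbf{c}$ from the first-order optimality conditions of the resulting finite-dimensional convex problem. First I would invoke Theorem~\ref{thm:genrepresenter} with $\psi(t) = \lambda t^2$ and the squared-error loss to justify that any minimizer has the form in~\eqref{eq:representer}. Then, using the reproducing property of $K_1$ on $\cO_1$ together with the Hermitian condition in Definition~\ref{def:operatorkernel}(i), I would identify the action $[K_1(f_j, \cdot)c_j](f_i) = K_1(f_j, f_i)c_j$, yielding $\widehat{O}_{n\lambda}(f_i) = \sum_k d_k [\mathbf{T}^k]_i + [\cM(\mathbf{c})]_i$ where $[\mathbf{T}^k]_i := E_k(f_i)$. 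Expanding the penalty via the reproducing property gives $\|\proj_{\cO_1}\widehat{O}_{n\lambda}\|_{\cO_1}^2 = \sum_{i,j}\inprod{K_1(f_i, f_j) c_i, c_j}_\cY = \inprod{\cM(\mathbf{c}), \mathbf{c}}_{\cY^n}$, so the objective reduces to
\[
J(\mathbf{d}, \mathbf{c}) = \frac{1}{n}\Big\|\mathbf{u} - \sum_{k=1}^r d_k \mathbf{T}^k - \cM(\mathbf{c})\Big\|_{\cY^n}^2 + \lambda\,\inprod{\cM(\mathbf{c}), \mathbf{c}}_{\cY^n}.
\]

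Next, I would compute the Fr\'echet derivative of $J$ in a direction $\mathbf{h} \in \cY^n$. Using that $\cM$ is self-adjoint, this derivative equals $-\frac{2}{n}\inprod{\cM(\mathbf{r} - \cM(\mathbf{c})), \mathbf{h}}_{\cY^n} + 2\lambda\inprod{\cM(\mathbf{c}), \mathbf{h}}_{\cY^n}$, where $\mathbf{r} := \mathbf{u} - \sum_k d_k \mathbf{T}^k$. Setting this to zero for all $\mathbf{h}$ produces the stationarity equation $\cM\bigl(\cM_\lambda(\mathbf{c}) - \mathbf{r}\bigr) = 0$. A canonical valid solution is $\mathbf{c} = \cM_\lambda^{-1}(\mathbf{r})$, since $\cM_\lambda = \cM + n\lambda I$ is strictly positive definite on $\cY^n$ for $\lambda > 0$ (as the sum of a positive semidefinite operator and $n\lambda I$), hence invertible. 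Splitting $\mathbf{r} = \mathbf{u} - \sum_k d_k \mathbf{T}^k$ and applying linearity of $\cM_\lambda^{-1}$ gives the announced formula $\mathbf{c} = \cM_\lambda^{-1}(\mathbf{u}) - \sum_k d_k \cM_\lambda^{-1}(\mathbf{T}^k)$.

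To determine $\mathbf{d}$, I would next set $\partial J / \partial d_k = 0$ for each $k \in [r]$, producing the orthogonality condition $\inprod{\mathbf{T}^k, \mathbf{r} - \cM(\mathbf{c})}_{\cY^n} = 0$. Using the identity $\mathbf{r} - \cM(\mathbf{c}) = \cM_\lambda(\mathbf{c}) - \cM(\mathbf{c}) = n\lambda\, \mathbf{c}$ (valid because $\cM_\lambda(\mathbf{c}) = \mathbf{r}$), this condition reduces to $\inprod{\mathbf{T}^k, \mathbf{c}}_{\cY^n} = 0$. Substituting the previously derived expression for $\mathbf{c}$ and collecting the $\mathbf{d}$-dependent terms on one side yields the linear system
\[
\sum_{l=1}^r \inprod{\mathbf{T}^k, \cM_\lambda^{-1}(\mathbf{T}^l)}_{\cY^n}\, d_l \;=\; \inprod{\mathbf{T}^k, \cM_\lambda^{-1}(\mathbf{u})}_{\cY^n}, \qquad k = 1, \ldots, r,
\]
i.e., $\mathbf{A}\mathbf{d} = \mathbf{b}$. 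Under the stated invertibility of $\mathbf{A}$, inverting gives the claimed expression for $\mathbf{d}$; global optimality of this critical point follows from the joint convexity of $J$ in $(\mathbf{d}, \mathbf{c})$.

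The main obstacle is the operator-kernel bookkeeping: one must correctly combine the Hermitian property of $K_1$ with its reproducing property on $\cO_1$ to identify the action $[K_1(f_j, \cdot) c_j](f_i) = K_1(f_j, f_i)c_j$, and cleanly distinguish among the four inner products on $\cO$, $\cO_1$, $\cY$, and $\cY^n$. A secondary subtlety is that the $\mathbf{c}$-stationarity equation only determines $\mathbf{c}$ modulo $\ker(\cM)$; however, components of $\mathbf{c}$ in $\ker(\cM)$ do not affect $\widehat{O}_{n\lambda}(f_i)$ or the penalty, so the choice $\mathbf{c} = \cM_\lambda^{-1}(\mathbf{r})$ is always an \emph{optimal} choice (matching the wording of the corollary), and convexity ensures that the pair $(\mathbf{d}, \mathbf{c})$ obtained above is a global minimizer.
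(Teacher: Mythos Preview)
Your proposal is correct and follows the same overall architecture as the paper's proof: invoke Theorem~\ref{thm:genrepresenter}, substitute the representer form into the objective, and solve the resulting first-order conditions. The paper's proof proceeds by \emph{sequential} optimization---first fixing $\mathbf{d}$, taking the variational derivative in $c_i$ to obtain $\cM(\cM_\lambda(\mathbf{c})) = \cM(\mathbf{u} - \sum_k d_k\mathbf{T}^k)$, then substituting the resulting $\mathbf{c}(\mathbf{d})$ back into the objective and performing roughly a page of algebra to extract the linear system for $\mathbf{d}$. Your route instead uses the \emph{simultaneous} first-order conditions: from $\partial J/\partial d_k = 0$ you get $\langle\mathbf{T}^k, \mathbf{r} - \cM(\mathbf{c})\rangle_{\cY^n} = 0$, and the identity $\mathbf{r} - \cM(\mathbf{c}) = n\lambda\,\mathbf{c}$ (a direct consequence of $\cM_\lambda(\mathbf{c}) = \mathbf{r}$) collapses this immediately to $\langle\mathbf{T}^k, \mathbf{c}\rangle_{\cY^n} = 0$, from which the system $\mathbf{A}\mathbf{d} = \mathbf{b}$ drops out in one line. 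This is a genuine simplification over the paper's computation; the joint-convexity remark at the end then cleanly certifies global optimality, which the paper leaves implicit.
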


\begin{proof}[Corollary~\ref{cor:closed_form}]\\
As shorthand, let us define the functions $T_i^k \in \cY$ for $i \in [n], k \in [r]$ where
\begin{equation}
    T_i^k = E_k(f_i)
\end{equation}
and the bounded linear operators $\Sigma_{ij} \in \cL(\cY)$ given by
\begin{equation}
    \Sigma_{ij} = K(f_j, f_i).
\end{equation}
Note that in fact,
\begin{equation}
    \inprod{K(f_i, \cdot)c_i, K(f_j, \cdot)c_j}_{\cO_1} = \inprod{c_i, K(f_j, f_i)c_j}_\cY = \inprod{c_i, \Sigma_{ij}c_j}_\cY
\end{equation}
by the reproducing property of $K$ and $\Sigma_{ij}^* = \Sigma_{ji}$. Also, all of the $\Sigma_{ij}$ are positive semidefinite linear operators by property (ii) of Definition~\ref{def:operatorkernel}. Hence, from the derivations in Theorem~\ref{thm:genrepresenter} we know that the optimal $\widehat{O}_{n\lambda}$ in~\eqref{eq:optO} satisfies
\begin{equation}
    \widehat{O}_{n\lambda}(f_i) = \sum_{k=1}^r d_kT_i^k + \sum_{j=1}^n \Sigma_{ij}c_j, \qquad \|\proj_{\cO_1}\widehat{O}_{n\lambda}\|_{\cO_1}^2 = \sum_{i=1}^n\sum_{j=1}^n \inprod{c_i, \Sigma_{ij}c_j}_\cY.
\end{equation}

Now fixing $\mathbf{d} \in \mathbb{R}^r$ we need to find the functions $c_i \in \cY$ that minimize
\begin{equation}
\begin{aligned}
    \sum_{i=1}^n\Big\|&u_i - \sum_{k=1}^r d_kT_i^k - \sum_{j=1}^n \Sigma_{ij}c_j\Big\|_\cY^2 + n\lambda\sum_{i=1}^n\sum_{j=1}^n \inprod{c_i, \Sigma_{ij}c_j}_\cY\\
    &= -2\sum_{i=1}^n\sum_{j=1}^n \biginprod{u_i - \sum_{k=1}^r d_kT_i^k, \Sigma_{ij}c_j}_\cY\\
    &\qquad+ \sum_{i=1}^n\sum_{j=1}^n\sum_{k=1}^n \biginprod{\Sigma_{ij}c_j, \Sigma_{ik}c_k}_\cY\\
    &\qquad+ n\lambda\sum_{i=1}^n\sum_{j=1}^n \inprod{c_i, \Sigma_{ij}c_j}_\cY + \text{const}.
\end{aligned}
\end{equation}
Taking the Hilbert space variational derivative of the expression above in $c_i \in \cY$ and setting it to zero we get that
\begin{equation}
    -\sum_{j=1}^n \Sigma_{ij}\Big(u_j - \sum_{k=1}^r d_kT_j^k\Big) + \sum_{j=1}^n\sum_{k=1}^n\Sigma_{ij}\Sigma_{jk}c_k + n\lambda\sum_{j=1}^n\Sigma_{ij}c_j = 0
\end{equation}
where we have repeatedly used the fact that $\Sigma_{ij}^* = \Sigma_{ji}$. We can rewrite this as
\begin{equation}\label{eq:cvarderiv}
\sum_{j=1}^n\sum_{k=1}^n\Sigma_{ij}\Big(\Sigma_{jk} + n\lambda\delta_{jk}\Big)c_k = \sum_{j=1}^n \Sigma_{ij}\Big(u_j - \sum_{k=1}^r d_kT_j^k\Big)
\end{equation}
where $\delta_{jk}$ is the Kronecker delta function.

Now define the bounded linear operators $\cM, \cM_\lambda \in \cL(\cY^n)$ for all $\mathbf{h} \in \cY^n$ by
\begin{equation}
\begin{split}
    [\cM_\lambda(\mathbf{h})]_i &= [\cM(\mathbf{h})]_i + n\lambda h_i\\
    [\cM(\mathbf{h})]_i &= \sum_{j=1}^n \Sigma_{ij}h_j.
\end{split}
\end{equation}
We can use the definition of $\cM_\lambda$ to write~\eqref{eq:cvarderiv} as
\begin{equation}
    \sum_{j=1}^n\Sigma_{ij}[\cM_\lambda(\mathbf{c})]_j = \sum_{j=1}^n \Sigma_{ij}\Big(u_j - \sum_{k=1}^r d_kT_j^k\Big).
\end{equation}
and using the definition of $\cM$ this can further be written as
\begin{equation}
    \cM\Big(\cM_\lambda(\mathbf{c})\Big) = \cM\Big(\mathbf{u} - \sum_{k=1}^r d_k\mathbf{T}^k\Big).
\end{equation}
It is clear from the expression above that if the linear operator $\cM_\lambda \in \cL(\cY)$ is invertible then
\begin{equation}\label{eq:copt}
    \mathbf{c} = \cM_\lambda^{-1}(\mathbf{u}) - \sum_{k=1}^rd_k\cM_\lambda^{-1}(\mathbf{T}^k).
\end{equation}
is a solution. Note that $\cM_\lambda$ is a sum of positive semidefinite operators $\Sigma_{ij}$ plus $\lambda$ times the identity operator. Hence $\cM_\lambda$ is indeed strictly positive definite and invertible for $\lambda > 0$. It is also self-adjoint under the inner product $\inprod{\mathbf{h}, \mathbf{g}}_{\cY^n} = \sum_{i=1}^n \inprod{h_i, g_i}_\cY$ since $\Sigma_{ij}^* = \Sigma_{ji}$. This also immediately implies that $\cM_\lambda^{-1}$ is strictly positive definite and self-adjoint.

Taking this value of $\mathbf{c}$ in~\eqref{eq:copt} we need to find the coefficients $\mathbf{d} \in \mathbb{R}^r$ that minimize
\begin{equation}
\begin{aligned}
    \sum_{i=1}^n\Big\|&u_i - \sum_{k=1}^r d_kT_i^k - \sum_{j=1}^n \Sigma_{ij}c_j\Big\|_\cY^2 + n\lambda\sum_{i=1}^n\sum_{j=1}^n \inprod{c_i, \Sigma_{ij}c_j}_\cY\\
    &= \sum_{i=1}^n\Big\|\Big(u_i - \sum_{j=1}^n \Sigma_{ij}[\cM_\lambda^{-1}(\mathbf{u})]_j\Big) - \sum_{k=1}^r d_k\Big(T_i^k - \sum_{j=1}^n \Sigma_{ij}[\cM_\lambda^{-1}(\mathbf{T}^k)]_j\Big)\Big\|_\cY^2\\
    &\qquad- 2n\lambda\sum_{i=1}^n\sum_{j=1}^n\sum_{k=1}^r d_k\biginprod{[\cM_\lambda^{-1}(\mathbf{u})]_i, \Sigma_{ij}[\cM_\lambda^{-1}(\mathbf{T}^k)]_j}_\cY\\
    &\qquad + n\lambda\sum_{i=1}^n\sum_{j=1}^n\sum_{k=1}^r\sum_{l=1}^rd_kd_l\biginprod{[\cM_\lambda^{-1}(\mathbf{T}^k)]_i, \Sigma_{ij}[\cM_\lambda^{-1}(\mathbf{T}^l)]_j}_\cY + \text{const}.
\end{aligned}
\end{equation}
Noting that $u_i = \sum_{j=1}^n \Sigma_{ij}[\cM_\lambda^{-1}(\mathbf{u})]_j + n\lambda[\cM_\lambda^{-1}(\mathbf{u})]_i$ and $T_i^k = \sum_{j=1}^n \Sigma_{ij}[\cM_\lambda^{-1}(\mathbf{T}^k)]_j + n\lambda[\cM_\lambda^{-1}(\mathbf{T}^k)]_i$ we can rewrite the above expression as
\begin{equation}
\begin{aligned}
    &n^2\lambda^2\sum_{i=1}^n\Big\|\cM_\lambda^{-1}(\mathbf{u})]_i - \sum_{k=1}^r d_k\cM_\lambda^{-1}(\mathbf{T}^k)]_i\Big\|_{L^2(D_\cY)}^2\\
    &\qquad- 2n\lambda\sum_{i=1}^n\sum_{j=1}^n\sum_{k=1}^r d_k\biginprod{[\cM_\lambda^{-1}(\mathbf{u})]_i, \Sigma_{ij}[\cM_\lambda^{-1}(\mathbf{T}^k)]_j}_\cY\\
    &\qquad + n\lambda\sum_{i=1}^n\sum_{j=1}^n\sum_{k=1}^r\sum_{l=1}^rd_kd_l\biginprod{[\cM_\lambda^{-1}(\mathbf{T}^k)]_i, \Sigma_{ij}[\cM_\lambda^{-1}(\mathbf{T}^l)]_j}_\cY + \text{const}.
\end{aligned}
\end{equation}
Dividing through by $n\lambda$ and setting the derivative in $d_k$ to zero we get
\begin{equation}
\begin{aligned}
    &-n\lambda\sum_{i=1}^n\biginprod{\cM_\lambda^{-1}(\mathbf{u})]_i, \cM_\lambda^{-1}(\mathbf{T}^k)]_i}_\cY + n\lambda\sum_{i=1}^n\sum_{l=1}^rd_l\biginprod{\cM_\lambda^{-1}(\mathbf{T}^k)]_i, \cM_\lambda^{-1}(\mathbf{T}^l)]_i}_\cY\\
    &\qquad- \sum_{i=1}^n\sum_{j=1}^n\biginprod{[\cM_\lambda^{-1}(\mathbf{u})]_i, \Sigma_{ij}[\cM_\lambda^{-1}(\mathbf{T}^k)]_j}_\cY\\
    &\qquad+ \sum_{i=1}^n\sum_{j=1}^n\sum_{l=1}^rd_l\biginprod{[\cM_\lambda^{-1}(\mathbf{T}^k)]_i, \Sigma_{ij}[\cM_\lambda^{-1}(\mathbf{T}^l)]_j}_\cY = 0
\end{aligned}
\end{equation}
which we can further rewrite as
\begin{equation}
\begin{aligned}
    &\sum_{i=1}^n\sum_{l=1}^rd_l\biginprod{[\cM_\lambda^{-1}(\mathbf{T}^k)]_i, \Big(\sum_{j=1}^n\Sigma_{ij}[\cM_\lambda^{-1}(\mathbf{T}^l)]_j + n\lambda[\cM_\lambda^{-1}(\mathbf{T}^l)]_i\Big)}_\cY\\
    &\qquad= \sum_{i=1}^n\biginprod{[\cM_\lambda^{-1}(\mathbf{u})]_i, \Big(\sum_{j=1}^n\Sigma_{ij}[\cM_\lambda^{-1}(\mathbf{T}^k)]_j + n\lambda[\cM_\lambda^{-1}(\mathbf{T}^k)]_i\Big)}_\cY.
\end{aligned}
\end{equation}
Finally, noting that the terms in the parentheses are $T_i^l, T_i^k$ respectively we can write
\begin{equation}
    \sum_{l=1}^rd_l\sum_{i=1}^n\inprod{T_i^l, [\cM_\lambda^{-1}(\mathbf{T}^k)]_i}_\cY = \sum_{i=1}^n \inprod{T_i^k, [\cM_\lambda^{-1}(\mathbf{u})]_i}_\cY.
\end{equation}
Using the shorthand notation $\inprod{\mathbf{h}, \mathbf{g}}_{\cY^n} = \sum_{i=1}^n \inprod{h_i, g_i}_\cY$ and the fact that $\cM_\lambda^{-1}$ is self-adjoint this becomes
\begin{equation}
    \sum_{l=1}^r\inprod{\mathbf{T}^k, \cM_\lambda^{-1}(\mathbf{T}^l)}_{\cY^n} d_l = \inprod{\mathbf{T}^k, \cM_\lambda^{-1}(\mathbf{u})}_{\cY^n}
\end{equation}
for $k = 1, \hdots, n$. Hence, assuming that
\begin{equation}
    \mathbf{A} = \begin{bmatrix}
    \inprod{\mathbf{T}^1, \cM_\lambda^{-1}(\mathbf{T}^1)}_{\cY^n} & \hdots & \inprod{\mathbf{T}^1, \cM_\lambda^{-1}(\mathbf{T}^r)}_{\cY^n}\\
    \vdots & & \vdots\\
    \inprod{\mathbf{T}^r, \cM_\lambda^{-1}(\mathbf{T}^1)}_{\cY^n} & \hdots & \inprod{\mathbf{T}^r, \cM_\lambda^{-1}(\mathbf{T}^r)}_{\cY^n}
    \end{bmatrix}.
\end{equation}
is invertible we find that
\begin{equation}
    \mathbf{d} = \mathbf{A}^{-1}\begin{bmatrix}
    \inprod{\mathbf{T}^1, \cM_\lambda^{-1}(\mathbf{u})}_{\cY^n}\\
    \vdots\\
    \inprod{\mathbf{T}^r, \cM_\lambda^{-1}(\mathbf{u})}_{\cY^n}
    \end{bmatrix}
\end{equation}
which is what we desired to prove.
\end{proof}

\newpage

\section{Rearrangements of Eigenvalues}\label{app:eig_rearrangement}

In Proposition~\ref{prop:simdiagsimple} we encountered the problem where it was necessary to bound the rate of decay of eigenvalues $\rho_{ij}$ of a multidimensional kernel. For a simple example, if we have a kernel $K = k_1 \otimes k_2$ where the eigenvalues of $k_1$ decay with rate $i^{-r_1}$ and the eigenvalues of $k_2$ decay with rate $j^{-r_2}$ then the eigenvalues of $K$ decay as $\rho_{ij} \asymp i^{-r_1}j^{-r_2}$. Sorting the eigenvalues $\rho_{ij}$ and determining their rate of decay exactly is a difficult problem. However, as in Example 2 of Section~\ref{subsec:mercer_kernels} it is possible to enumerate these eigenvalues using a bijection $\pi: \mathbb{N}^2 \to \mathbb{N}$ such as the Cantor pairing function. This allows us to easily obtain bounds on the rate of decay of the eigenvalues $\rho_k := \rho_{\pi^{-1}(k)}$ under this ordering. However, these error bounds cannot be immediately applied to bound the rate of decrease of the sorted eigenvalues $\rho_1 \geq \rho_2 \geq \hdots$

Here we show that any monotonic upper bound for a sequence is still an upper bound for the sequence sorted in decreasing order. First we prove the following lemma to show that any sequence of eigenvalues tending to zero can be reordered into sorted decreasing order by some bijection.
\begin{lemma}
    A sequence $\{a_k\}_{k=1}^\infty$ satisfies $\lim_{k \to \infty} a_k = \inf_{k \geq 1} a_k$ if and only if there exists a bijection $\pi: \mathbb{N} \to \mathbb{N}$ such that $a_{\pi(1)} \geq a_\pi(2) \geq \hdots$
\end{lemma}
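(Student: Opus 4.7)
The proof naturally splits into the two implications, with sufficiency $(\Leftarrow)$ being a direct consequence of monotone convergence together with the bijection property, and necessity $(\Rightarrow)$ requiring an explicit greedy construction of $\pi$.

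For $(\Leftarrow)$, suppose a bijection $\pi$ yields a weakly decreasing rearrangement $b_k := a_{\pi(k)}$. Being monotone and bounded below by $\inf_j a_j$, the sequence $\{b_k\}$ converges to a limit $L$, and since $\pi$ is a bijection the multisets $\{b_k\}$ and $\{a_k\}$ agree, forcing $L = \inf_k b_k = \inf_k a_k$. To transfer the limit back to $\{a_k\}$, I would fix $\epsilon > 0$ and pick $K$ so that $b_k \in [L, L + \epsilon]$ for all $k \geq K$; the set of indices $j$ with $a_j \notin [L, L+\epsilon]$ is then contained in the finite set $\pi(\{1, \dots, K-1\})$, which yields $\lim_k a_k = L = \inf_k a_k$.

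For $(\Rightarrow)$, write $a_* := \inf_k a_k = \lim_k a_k$. The key preliminary observation is that for every $v > a_*$ the set $\{k : a_k \geq v\}$ is finite, which follows immediately from $a_k \to a_*$. I would then define $\pi$ by the greedy rule: set $\pi(1) := \min\{k : a_k = \sup_j a_j\}$, and having chosen $\pi(1), \dots, \pi(n)$ take $\pi(n+1)$ to be the smallest $k$ outside $\{\pi(1), \dots, \pi(n)\}$ attaining the maximum of $a$ over the remaining indices. The maximum exists at every stage because the preliminary fact forces the supremum (finite since the sequence converges) to be attained; weak decrease of $a_{\pi(k)}$ is built into the rule; and surjectivity follows because any fixed index $j$ is selected within $\#\{i : a_i > a_j\} + \#\{i \leq j : a_i = a_j\}$ greedy steps, both finite by the preliminary fact applied at $v = a_j$ in the generic case $a_j > a_*$.

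The main obstacle I anticipate is the delicate boundary case where $a_*$ is attained by some $a_k$. In the eigenvalue application this is a non-issue, since positive eigenvalues converging to zero satisfy $a_k > 0 = a_*$ and the greedy construction runs without modification. In the fully general statement, one must separately enumerate the strictly larger indices and then append those with $a_k = a_*$; this succeeds precisely when one of the two corresponding index sets is finite, which is the configuration consistent with the claimed equivalence.
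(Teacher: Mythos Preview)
Your $(\Leftarrow)$ argument is correct and in fact more careful than the paper's, which simply asserts that ``a sequence's limit is invariant under rearrangements'' without justifying why the \emph{original} sequence converges at all; your finiteness argument via $\pi(\{1,\dots,K-1\})$ supplies exactly the missing step.

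For $(\Rightarrow)$ you take a genuinely different route. The paper partitions the range $(L,U]$ into dyadic sub-intervals shrinking toward $L$, observes that each contains only finitely many terms of the sequence, sorts within each block, and concatenates. Your greedy selection is equally natural and rests on the same preliminary fact that $\{k:a_k\ge v\}$ is finite for every $v>a_*$.

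You are right to flag the boundary case, and in fact it shows the lemma as stated is \emph{false}. Take $a_1=0$ and $a_k=1/k$ for $k\ge 2$: then $\lim_k a_k=\inf_k a_k=0$, yet no bijection $\pi$ gives a decreasing rearrangement, since $\pi^{-1}(1)=m$ would force $a_{\pi(k)}\le 0$ and hence $a_{\pi(k)}=0$ for all $k>m$, impossible as only one term vanishes. Your final characterization is slightly off, however: the construction succeeds not merely when ``one of the two index sets is finite,'' but precisely when either $\{k:a_k=a_*\}$ is empty or $\{k:a_k>a_*\}$ is finite; the counterexample above has a one-element equal set and still fails. The paper's dyadic construction has the identical gap, since terms equal to $L$ lie in none of its half-open intervals. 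In the intended eigenvalue application one has $a_k>0=a_*$ throughout, so both arguments are valid there, and your proposal is correct in the setting where the lemma is actually used.
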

\begin{proof}
    First let's assume that $a_k$ satisfies $\lim_{k \to \infty} a_k = \inf_{k \geq 1} a_k$. Then we can construct the following bijection $\pi$. Define $U = \sup_{k \geq 1} a_k$ and $L = \inf_{k \geq 1} a_k$. Take all the $a_k \in [U, \frac{U}{2} + \frac{L}{2})$ and sort them. Continue this process by appending all of the $a_k \in [\frac{U}{2^n} + \frac{(2^n - 1)L}{2^n}, \frac{U}{2^{n+1}} + \frac{(2^{n+1} - 1)L}{2^{n+1}})$ in sorted order for all $n \geq 1$. This defines the bijection $\pi$ from the original sequence of $a_k$'s to the sorted decreasing sequence $a_1 \geq a_2 \geq \hdots$
    
    Now in the opposite direction, let's assume there exists a bijection $\pi$ such that $a_{\pi(1)} \geq a_\pi(2) \geq \hdots$ Then by the monotone convergence theorem we have that $\lim_{k \to \infty} a_{\pi(k)} = \inf_{k \geq 1} a_k$. Since a sequence's limit is invariant under rearrangements $\pi$, this proves that $\lim_{k \to \infty} a_k = \inf_{k \geq 1} a_k$.
\end{proof}

\begin{lemma}\label{lemma:rearrangement}
    Assume that a sequence $\{a_k\}_{k=1}^\infty$ is nonincreasing $a_1 \geq a_2 \geq \hdots$ and bounded from below. Take any bijection $\pi: \mathbb{N} \to \mathbb{N}$ and assume that under this reordering we know that $a_{\pi(k)} \leq b_k$ for some nonincreasing sequence $b_1 \geq b_2 \geq \hdots$ (i.e. an upper bound). Then this implies that $a_k \leq b_k$.
\end{lemma}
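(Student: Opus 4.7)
The plan is to use a simple counting argument based on the fact that any set of $k$ distinct positive integers has maximum at least $k$. Fix an index $k \geq 1$; the goal is to exhibit some index $m \geq k$ such that $\pi(m) \leq k$, at which point the three monotonicity facts (nonincrease of $a$, nonincrease of $b$, and the pointwise bound $a_{\pi(j)} \leq b_j$) can be chained together to produce $a_k \leq b_k$.

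More concretely, I would consider the set of preimages
\begin{equation*}
S := \pi^{-1}(\{1,2,\ldots,k\}) = \{\pi^{-1}(1),\pi^{-1}(2),\ldots,\pi^{-1}(k)\}.
\end{equation*}
Since $\pi$ is a bijection, $S$ consists of $k$ distinct positive integers, so $m := \max S$ satisfies $m \geq k$. By construction, $\pi(m) = i$ for some $i \in \{1,\ldots,k\}$, hence $\pi(m) \leq k$.

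Now I would chain the three inequalities: because $a$ is nonincreasing and $\pi(m) = i \leq k$, we have $a_k \leq a_i = a_{\pi(m)}$; by the standing hypothesis applied at index $m$, $a_{\pi(m)} \leq b_m$; and because $b$ is nonincreasing and $m \geq k$, $b_m \leq b_k$. Together these give $a_k \leq a_{\pi(m)} \leq b_m \leq b_k$, which is the desired conclusion.

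There is no real obstacle here; the only slightly subtle point is recognizing that one does not need to compare the sorted sequence $a$ against the permuted values termwise, but rather only needs a single index $m$ in the tail (i.e.\ $m \geq k$) whose image under $\pi$ lies in the head $\{1,\ldots,k\}$, which is guaranteed by the pigeonhole observation above.
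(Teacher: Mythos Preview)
Your proof is correct and rests on the same counting/pigeonhole observation as the paper's: some index $m \geq k$ must satisfy $\pi(m) \leq k$, or equivalently one cannot have $\pi(\{k,k+1,\dots\}) \subseteq \{k+1,k+2,\dots\}$. The paper packages this as a proof by contradiction after first reducing to the tightest nonincreasing upper bound $b_k = \sup_{m \geq k} a_{\pi(m)}$, whereas you argue directly and skip that reduction; the underlying idea is the same, and your version is slightly cleaner.
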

\begin{proof}
    Note again that the $a_k$ have a limit by the monotone convergence theorem and are bounded from above. Without loss of generality, the tightest nonincreasing upper bound for the sequence $a_{\pi(k)}$ is $b_k = \sup_{m \geq k} a_{\pi(m)} < \infty$ so it suffices to prove the lemma for this upper bound. We proceed to prove this result by contradiction. Assume there exists a $k \geq 1$ such that $a_k > b_k = \sup_{m \geq k} a_{\pi(m)}$. Then by the monotonicity of the $a_k$ sequence this implies that $k < \pi(m)$ for all $m \geq k$. This can be rewritten as $\{\pi(m): m \geq k\} \subseteq \{k+1, k+2, \hdots\}$ which implies that $\{\pi^{-1}(m): 1 \leq m \leq k\} \subseteq \{1, \hdots, k-1\}$. But since $\pi^{-1}$ is also a bijection then we have reached a contradication since the cardinality of $\{1, \hdots, k\}$ is larger than $\{1, \hdots, k-1\}$ so $\pi^{-1}$ is not injective.
\end{proof}

\section{Decay of Kernel Eigenvalues}\label{app:eig_decay}
In this section, we describe two important cases in which eigenvalues of kernel functions decay at polynomial or exponential rates.\\

\noindent\underline{Example 1:} Assume we have an RKHS with a reproducing kernel $K: D \times D \to \mathbb{R}$ on a compact domain $D$ that is a Mercer kernel (e.g. continuous, square integrable, and nonnegative definite). Then we know that $K$ is a bounded function on $D$ so
\begin{equation}
    \Tr(K) = \int_D K(x, x)\ud x < \infty
\end{equation}
Furthermore, by Mercer's theorem we know that $K$ has the eigenexpansion
\begin{equation}
    K(x, y) = \sum_{n=1}^\infty \lambda_n \phi_n(x)\phi_n(y)
\end{equation}
for nonnegative eigenvalues $\lambda_1 \geq \lambda_2 \geq \hdots$ and $L^2(D)$ orthonormal eigenfunctions $\phi_k$. This implies that
\begin{equation}
    \Tr(K) = \int_D K(x, x)\ud x = \sum_{n=1}^\infty \lambda_n.
\end{equation}
Since the series $\sum_{n=1}^\infty \lambda_n$ converges and the sequence $\{\lambda_n\}_{n=1}^\infty$ is nonnegative and decreasing, this implies that $n\lambda_n \to 0$ as $n \to \infty$. Hence, this proves that
\begin{equation}
    \lambda_n \lesssim \frac{1}{n}.
\end{equation}
where again the notation $a \lesssim b$ means that $a \leq Cb$ for some $C \geq 0$. In general, the spectrum of a reproducing kernel (assumed to be positive semidefinite) decreases at rate at least $\frac{1}{n}$ if it is a trace class linear operator. Also, this statement is sharp over the space of reproducing kernels as we can construct for any $\epsilon > 0$ a symmetric, positive semidefinite, square integrable reproducing kernel of the form
\begin{equation}
    K(x, y) = \sum_{n=1}^\infty n^{-(1+\epsilon)}\phi_n(x)\phi_n(y).
\end{equation}
for any $\epsilon > 0$. For an arbitrary domain $D$, as long as $L^2(D)$ is a separable Hilbert space we can always construct an infinitely large countable basis. Note that $\Tr(K) = \sum_{n=1}^\infty n^{-(1+\epsilon)} < \infty$ by the integral test.\\

\newpage
\noindent\underline{Example 2:} A large class of reproducing kernels can be built by taking tensor products of simpler ones. For example, taking the product of two squared exponential kernels $\frac{1}{\sqrt{2\pi}\sigma_1}e^{-\frac{(x-\xi)^2}{2\sigma_1^2}}$ and $\frac{1}{\sqrt{2\pi}\sigma_2}e^{-\frac{(y-\eta)^2}{2\sigma_2^2}}$ gives us another squared exponential kernel. In general, take $m$ RKHSs with reproducing kernels $K_i: D_i \times D_i \to \mathbb{R}$ where $D_i \subset \mathbb{R}^{d_i}$ for $i = 1, \hdots, m$. Defining the product kernel $K: (D_1 \times \hdots \times D_m)^2 \to \mathbb{R}$ where
\begin{equation}
    K(x_1, \hdots, x_m, y_1, \hdots, y_m) = \prod_{i=1}^m K_i(x_i, y_i)
\end{equation}
then by~\citet[Section~5, Theorem~5.24]{paulsen2016introduction} we know that $K$ is also a reproducing kernel of some RKHS. If we have information about the rate of decay of the eigenvalues of each kernel $K_i$, then we can bound the rate of decay of the tensor product kernel $K$.

Let us assume that the sorted eigenvalues of each $K_i$ are $\{\lambda_k^{(i)}\}_{k=1}^\infty$ and that they decay polynmially at the rate $\lambda_k^{(i)} \lesssim k^{-r_i}$ for some $r_i > 0$. Note that the eigenvalues of the tensor product kernel $K$ are $\rho_{k_1 \hdots k_m} = \prod_{i=1}^m \lambda_{k_i}^{(i)}$ for all $k_1 \hdots k_m \geq 1$. To quantify the rate of decrease of the eigenvalues of $K$, we first order them as $\{\rho_n\}_{n=1}^\infty$ using the $m$-tupling function
\begin{equation}
    \pi^{(m)}(k_1, \hdots, k_m) = \pi(\pi^{(m-1)}(k_1, \hdots, k_{m-1}), k_m), \quad m > 2
\end{equation}
defined recursively where
\begin{equation}
    \pi^{(2)}(k_1, k_2) = \pi(k_1, k_2) = \frac{1}{2}(k_1 + k_2 - 2)(k_1 + k_2 - 1) + k_2
\end{equation}
is the Cantor pairing function. For any $\rho_n = \rho_{k_1 \hdots k_m}$ in our ordered sequence where $n = \pi^{(m)}(k_1, \hdots, k_m)$, define the sum $s(n) = \sum_{i=1}^m k_i$. It is not hard to check that $\prod_{i=1}^m x_i^{-r_i}$ for $(x_1, \hdots, x_m) \in \mathbb{R}^m$ is a convex function on the convex constraint set $x_i \geq 1$ with $\sum_{i=1}^m x_i = s(n)$. Therefore, the global maximum of this function is located at one of the extremal points $x_i = s(n) - m + 1$ with $x_j = 1$ for all $j \neq i$. As shorthand, write $[m] = \{1, \hdots, m\}$. Defining $i_{\min} = \argmin_{i \in [m]} r_i$ and $r_{\min} = r_{i_{\min}}$ we know that this function over the convex constraint set is maximized at the extremal point $x_{i_{\min}} = s(n) - m + 1$ with $x_j = 1$ for all $j \neq i_{\min}$. The maximal value it takes there is $(s(n) - m + 1)^{-r_{\min}}$. This implies that
\begin{equation}
    \rho_n = \rho_{k_1, \hdots, k_m} = \prod_{i=1}^m\lambda_{k_i}^{(i)} \lesssim \prod_{i=1}^m k_i^{-r_i} \lesssim s(n)^{-r_{\min}}
\end{equation}
where again the notation $a \lesssim b$ means that $a \leq Cb$ for some $C \geq 0$. If we iterate over $m$-tuples $(k_1, \hdots, k_m)$ in the order prescribed by the pairing function $\pi^{(m)}$, then by the time we have reached a tuple where $n = \pi^{(m)}(k_1, \hdots, k_m)$ we must have iterated over \textit{at most} all of the positive $m$-tuples with sums $m, \hdots, s(n)$. The number of such $m$-tuples is exactly $\sum_{k=m}^{s(n)}{k-1 \choose m-1}$ because ${k-1 \choose m-1}$ is the number of tuples whose sum is exactly $k$ through a stars and bars argument. Now by the Hockey-stick identity we see that $\sum_{k=m}^{s(n)}{k-1 \choose m-1} = {s(n) \choose m} \lesssim s(n)^m$. Finally, this implies that $n \lesssim s(n)^m$ so $s(n) \gtrsim n^\frac{1}{m}$ and hence
\begin{equation}
    \rho_n \lesssim n^{-\frac{r_{\min}}{m}}.
\end{equation}
Since the $\rho_n$ enumerated by the $m$-tupling function $\pi^{(m)}$ tend to zero, then by Lemma~\ref{lemma:rearrangement} the sorted eigenvalues $\rho_1 \geq \rho_2 \geq \hdots$ of $K$ also decrease at the same rate as above.

Following a similar analysis, if all of the sorted eigenvalues $\{\lambda_k^{(i)}\}_{k=1}^\infty$ of $K_i$ decay exponentially at the rate $\lambda_k^{(i)} \lesssim \exp(-a_ik^{r_i})$ for all $i \in [d]$ then under the constraint $s(n) = \sum_{i=1}^m k_i$ we have that
\begin{equation}
    \rho_n = \rho_{k_1, \hdots, k_m} = \prod_{i=1}^m\lambda_{k_i}^{(i)} = e^{-\sum_{i=1}^m r_i k_i} \lesssim e^{-a_{i_{\min}}s(n)^{r_{i_{\min}}}}.
\end{equation}
where $i_{\min} = \argmin_{i \in [m]} r_i$. Since we know that $s(n) \gtrsim n^\frac{1}{m}$ then finally
\begin{equation}
    \rho_n \lesssim \exp\Big(-a_{i_{\min}} \cdot n^\frac{r_{i_{\min}}}{m}\Big).
\end{equation}
Again by Lemma~\ref{lemma:rearrangement} the sorted eigenvalues $\rho_1 \geq \rho_2 \geq \hdots$ of $K$ also decrease at the same rate as above.

\newpage

\section{SubGaussian Functions}\label{app:subG}
In this appendix we define what it means for a random function to be subgaussian with respect to a variance proxy operator, similar to the definition of subgaussianity for real random variables.
\begin{definition}
 Let $\Gamma: L^2(D) \to L^2(D)$ be a positive semidefinite trace class linear operator. A random variable $F \in L^2(D)$ is subgaussian with respect to variance proxy $\Gamma$ (written as $F \sim \subG(\Gamma)$) if there exists an $\alpha \geq 0$ such that for all $f \in L^2(D)$,
    \begin{equation}
        \mathbb{E}\Big[e^{\inprod{f, F - \mathbb{E}[F]}_{L^2(D)}}\Big] \leq e^{\alpha^2\inprod{\Gamma f, f}_{L^2(D)}/2}
    \end{equation}
    Furthermore, if $F \sim \subG(\Gamma)$ then the $\psi_2$-norm of $F$ with respect to $\Gamma$ is defined as
    \begin{equation}
        \|F\|_{\psi_2, \Gamma} = \inf\Big\{\alpha \geq 0: \mathbb{E}\Big[e^{\inprod{f, F - \mathbb{E}[F]}_{L^2(D)}}\Big] \leq e^{\alpha^2\inprod{\Gamma f, f}_{L^2(D)}/2}, \ \forall f \in L^2(D)\Big\}.
    \end{equation}
\end{definition}

We denote the covariance operator of $F$ as the positive semidefinite linear operator $\Sigma_F: L^2(D_\cY) \to L^2(D_\cY)$ which is identified with the function
\begin{equation}
    \Sigma_F = \mathbb{E}[(F - \E[F]) \otimes (F - \E[F])].
\end{equation}
By~\citet[Section~2.1, Lemma~2.4]{chen2018hanson} we know that 
\begin{equation}
\Sigma_F \preceq 4\|F\|_{\psi_2, \Gamma_F}^2\Gamma_F
\end{equation}
and we also make the following key assumption.
\begin{definition}[Strict subgaussianity]
    The random variable $F \sim \subG(\Gamma_F)$ is called strictly subgaussian if its covariance operator and covariance proxy satisfy
    \begin{equation}
        \Gamma_F \preceq C\Sigma_F
    \end{equation}
    for some fixed constant $C > 0$.
\end{definition}
The definition above is a natural extension of \textit{strict subgaussianity} to random variables in Hilbert spaces. It is trivially satisfied for Gaussian random variables $Z \in L^2(D)$ for which it is easy to check that $Z \sim \subG(\Sigma)$ where $\Sigma = \mathbb{E}[(Z - \mathbb{E}[Z]) \otimes (Z - \mathbb{E}[Z])]$ and $\|Z\|_{\psi_2, \Sigma} = 1$.~\\

\begin{lemma}\label{lem:subgcomp}
    Take any subgaussian random variable $F \sim \subG(\Gamma)$ in $L^2(D_\cX)$ where $\Gamma \in L^2(D_\cX \times D_\cX)$ is a trace class linear operator. Then for any operator $H \in L^2(D_\cX \times D_\cY)$ we have that $H(F) \sim \subG(H\Gamma H^*)$ and $\|H(F)\|_{\psi_2, H\Gamma H^*} \leq \|F\|_{\psi_2, \Gamma}$.
\end{lemma}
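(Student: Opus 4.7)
The plan is to verify the subGaussian bound for $H(F)$ directly from the definition by passing the test function through the adjoint $H^*$.

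First I would fix an arbitrary $g \in L^2(D_\cY)$ and rewrite the linear functional appearing in the MGF. Because $H \in L^2(D_\cX \times D_\cY)$ acts as a bounded (in fact Hilbert--Schmidt) linear operator $L^2(D_\cX) \to L^2(D_\cY)$, expectation commutes with $H$ so $\mathbb{E}[H(F)] = H(\mathbb{E}[F])$, and the adjoint identity gives
\begin{align*}
    \inprod{g, H(F) - \mathbb{E}[H(F)]}_{L^2(D_\cY)} = \inprod{H^* g, F - \mathbb{E}[F]}_{L^2(D_\cX)}.
\end{align*}

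Next, for any $\alpha > \|F\|_{\psi_2, \Gamma}$, applying the subGaussian hypothesis on $F$ to the test function $f := H^* g \in L^2(D_\cX)$ yields
\begin{align*}
    \mathbb{E}\Big[e^{\inprod{g, H(F) - \mathbb{E}[H(F)]}_{L^2(D_\cY)}}\Big]
    = \mathbb{E}\Big[e^{\inprod{H^*g, F - \mathbb{E}[F]}_{L^2(D_\cX)}}\Big]
    \leq e^{\alpha^2 \inprod{\Gamma H^* g, H^* g}_{L^2(D_\cX)}/2}.
\end{align*}
Using the adjoint identity once more, $\inprod{\Gamma H^* g, H^* g}_{L^2(D_\cX)} = \inprod{H \Gamma H^* g, g}_{L^2(D_\cY)}$, so the right-hand side is exactly $\exp(\alpha^2 \inprod{H\Gamma H^* g, g}_{L^2(D_\cY)}/2)$. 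This is the subGaussian bound for $H(F)$ with variance proxy $H\Gamma H^*$, and since it holds for every $\alpha > \|F\|_{\psi_2, \Gamma}$, taking the infimum gives $\|H(F)\|_{\psi_2, H\Gamma H^*} \leq \|F\|_{\psi_2, \Gamma}$.

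The only mild bookkeeping is to confirm $H\Gamma H^*$ is a legitimate variance proxy: it is manifestly self-adjoint and positive semidefinite (since $\inprod{H\Gamma H^* g, g} = \inprod{\Gamma(H^*g), H^*g} \geq 0$ by positivity of $\Gamma$), and it is trace class because $\Gamma$ is trace class and $H$ is bounded (trace-class operators form an ideal in the bounded operators, so $H\Gamma H^* \in \mathcal{S}_1$). There is no real obstacle here; the whole argument is essentially an adjoint computation, and the main thing to be careful about is merely confirming that $H$'s being an $L^2$-kernel lets it play the role of a bounded linear map with well-defined adjoint $H^*$ given by the transposed kernel.
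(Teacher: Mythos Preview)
Your proposal is correct and follows essentially the same approach as the paper: both pass the test function through the adjoint $H^*$, apply the subGaussian bound for $F$ to $f = H^*g$, and then rewrite the exponent as $\inprod{H\Gamma H^* g, g}$. Your version adds a bit of extra bookkeeping (justifying that $H\Gamma H^*$ is positive semidefinite and trace class, and that $\mathbb{E}[H(F)] = H(\mathbb{E}[F])$), which the paper omits, but the core argument is identical.
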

\begin{proof}
    First since $F \sim \subG(\Gamma)$ then there exists an $\alpha \geq 0$ such that for all $f \in L^2(D_\cX)$,
    \begin{equation}
        \mathbb{E}\Big[e^{\inprod{f, F - \mathbb{E}[F]}_{L^2(D_\cX)}}\Big] \leq e^{\alpha^2\inprod{\Gamma f, f}_{L^2(D_\cX)}/2}.
    \end{equation}
    So for the same $\alpha$ we can write for all $g \in L^2(D_\cY)$,
    \begin{equation}
        \mathbb{E}\Big[e^{\inprod{g, H(F) - \mathbb{E}[H(F)]}_{L^2(D_\cY)}}\Big] = \mathbb{E}\Big[e^{\inprod{H^*(g), F - \mathbb{E}[F]}_{L^2(D_\cX)}}\Big] \leq e^{\alpha^2\inprod{H\Gamma H^*(g), g}_{L^2(D_\cY)}/2}
    \end{equation}
    which proves that $H(F) \sim \subG(H\Gamma H^*)$. Furthermore, we know that the $\psi_2$-norm of $F$ with respect to $\Gamma$ is defined as
    \begin{equation}
        \|F\|_{\psi_2, \Gamma} = \inf\Big\{\alpha \geq 0: \mathbb{E}\Big[e^{\inprod{g, F - \mathbb{E}[F]}_{L^2(D_\cX)}}\Big] \leq e^{\alpha^2\inprod{\Gamma g, g}_{L^2(D_\cX)}/2}, \ \forall g \in L^2(D_\cX)\Big\}.
    \end{equation}
    which implies by the derivations above that
    \begin{equation}
    \begin{aligned}
        \|F\|_{\psi_2, \Gamma} &\geq \inf\Big\{\alpha \geq 0: \mathbb{E}\Big[e^{\inprod{g, H(F) - \mathbb{E}[H(F)]}_{L^2(D_\cY)}}\Big] \leq e^{\alpha^2\inprod{H\Gamma H^*(g), g}_{L^2(D_\cY)}/2}, \ \forall g \in L^2(D_\cY)\Big\}\\
        &= \|H(F)\|_{\psi_2, H\Gamma H^*}.
    \end{aligned}
    \end{equation}
\end{proof}

\begin{remark}\label{rem:psi1}
    Define the usual $\psi_1$-norm for a (possibly noncentered) subexponential real-valued random variable $R$,
    \begin{equation}
        \|R\|_{\psi_1} = \inf\Big\{t > 0: \mathbb{E}\Big[\exp\Big(\frac{|R|}{t}\Big)\Big] \leq 2\Big\}.
    \end{equation}
    Then for any centered subgaussian vectors in $L^2(D)$ denoted by $X \sim \subG(\Sigma)$ and $Y \sim \subG(\Gamma)$ we have that
    \begin{equation}
    \begin{aligned}
        \|\inprod{X, &Y}_{L^2(D)}\|_{\psi_1} := \inf\Big\{t > 0: \mathbb{E}\Big[\exp\Big(\frac{|\inprod{X, Y}_{L^2(D)}|}{t}\Big)\Big] \leq 2\Big\}\\
        &\leq \inf\Big\{t > 0: \mathbb{E}\Big[\exp\Big(\frac{\|X\|_{L^2(D)}\|Y\|_{L^2(D)}}{t}\Big)\Big] \leq 2\Big\}\\
        &\leq \Big\|\|X\|_{L^2(D)}\Big\|_{\psi_2}\Big\|\|Y\|_{L^2(D)}\Big\|_{\psi_2}\\
        &= \Big\|\|X\|_{L^2(D)}^2\Big\|_{\psi_1}^\frac{1}{2}\Big\|\|Y\|_{L^2(D)}^2\Big\|_{\psi_1}^\frac{1}{2}
    \end{aligned}
    \end{equation}
    By~\citet[Appendix~A.2, Lemma~A.4]{chen2018hanson} we know there exists a universal constant $c > 0$ such that $\|\|X\|_{L^2(D)}^2\Big\|_{\psi_1} \leq c\|X\|_{\psi_2, \Sigma}^2\Tr(\Sigma)$ which implies that
    \begin{equation}
        \|\inprod{X, Y}_{L^2(D)}\|_{\psi_1} \leq c\|X\|_{\psi_2, \Sigma}\|Y\|_{\psi_2, \Gamma}\Tr(\Sigma)^\frac{1}{2}\Tr(\Gamma)^\frac{1}{2}.
    \end{equation}
\end{remark}

\newpage
\section{Results for Error Analysis}\label{app:rates}
In this appendix we derive all of the results for the error analysis of our Green's function estimator. The following results closely follow the derivations in~\citet{yuan2010reproducing} for bounding the error of scalar-output functional linear regression and extend them to the case of functional-output functional linear regression.~\\
\begin{proof}[Proposition~\ref{prop:equivnorms}]
First note that
\begin{equation}
    \|G\|_{\Sigma_F}^2 \leq \mu_1\int_{D_\cY} \|G(x, \cdot)\|_{L^2(D_\cX)}^2\ud x = \mu_1\|G\|_{L^2(D_\cX \times D_\cY)}^2 \leq c_1\|G\|_\cG^2
\end{equation}
where the last inequality follows by Cauchy--Schwarz since
\begin{equation}
\begin{aligned}
    \|G\|_{L^2(D_\cX \times D_\cY)}^2 &= \int_{D_\cY}\int_{D_\cX}\inprod{K_{(x, y)}, G}_\cG^2\ud x \ud y \leq \int_{D_\cY}\int_{D_\cX}\|K_{(x, y)}\|_\cG^2 \|G\|_\cG^2\ud x \ud y\\
    &= \Big(\int_{D_\cY}\int_{D_\cX} K(x, y, x, y)^2\ud x \ud y\Big)\|G\|_\cG^2
\end{aligned}
\end{equation}
where the integral on the right hand side is finite since $D$ is closed and bounded and $K$ is continuous. From this, since $J(G) = \|G\|_\cG^2$ we immediately have that
\begin{equation}
    \|G\|_{\overline{K}}^2 = \|G\|_{\Sigma_F}^2 + J(G) \leq (c_1 + 1)\|G\|_\cG^2.
\end{equation}
The reverse inequality follows immediately since by definition
\begin{equation}
    \|G\|_\cG^2 = J(G) \leq \|G\|_{\overline{K}}^2.
\end{equation}
for any $G \in \cG$.
This proves that $\|\cdot\|_\cG$ and $\|\cdot\|_{\overline{K}}$ are equivalent norms on $\cG$.

From this we immediately see that $\|\cdot\|_{\overline{K}}$ is a valid norm on $\cG$ since it is zero only at $0 \in \cG$ and finite for all $G \in \cG$. Furthermore, $\cG$ equipped with $\|\cdot\|_\cG$ is an RKHS iff all the linear evaluation functionals $L_{(x, y)}: G \to G(x, y)$ are bounded
\begin{equation}
    |L_{(x, y)}(G)|:= |G(x, y)| \leq M\|G\|_\cG, \ \forall G \in \cG.
\end{equation}
This proves that $\cG$ equipped with $\|\cdot\|_{\overline{K}}$ is also an RKHS because all of the linear functionals are once again bounded
\begin{equation}
    |L_{(x, y)}(G)|:= |G(x, y)| \leq M\|G\|_\cG \leq M\|G\|_{\overline{K}}, \ \forall G \in \cG.
\end{equation}
\end{proof}

\begin{proof}[Theorem~\ref{thm:simdiag}]\\
First we write out for any $G \in \cG$,
\begin{equation}
\begin{aligned}
    \overline{K}^{-\frac{1}{2}}G &= \sum_{k=1}^\infty \inprod{\overline{K}^{-\frac{1}{2}}G, \Gamma_k}_{L^2(D_\cX \times D_\cY)}\Gamma_k = \sum_{k=1}^\infty \inprod{\overline{K}^{-\frac{1}{2}}G, \nu_k^\frac{1}{2}\overline{K}^{-\frac{1}{2}}\Omega_k}_{L^2(D_\cX \times D_\cY)}\nu_k^\frac{1}{2}\overline{K}^{-\frac{1}{2}}\Omega_k\\
    &= \sum_{k=1}^\infty \nu_k\inprod{\overline{K}^{-1}G, \Omega_k}_{L^2(D_\cX \times D_\cY)}\overline{K}^{-\frac{1}{2}}\Omega_k = \overline{K}^{-\frac{1}{2}}\Big(\sum_{k=1}^\infty\nu_k\inprod{G, \Omega_k}_{\overline{K}}\Omega_k\Big).
\end{aligned}
\end{equation}
Applying $\overline{K}^\frac{1}{2}$ to both sides we see that
\begin{equation}
    G = \sum_{k=1}^\infty g_k\Omega_k, \quad g_k = \nu_k\inprod{G, \Omega_k}_{\overline{K}}
\end{equation}
which converges absolutely. Now let $\gamma_k = (\nu_k^{-1} - 1)^{-1}$. Then we can use the fact that $\inprod{\Omega_k, \Omega_j}_{\overline{K}} = \nu_k^{-1}\delta_{kj}$ to write
\begin{equation}
    \|G\|_{\overline{K}}^2 = \Big\langle\sum_{k=1}^\infty g_k\Omega_k, \sum_{j=1}^\infty g_j\Omega_j\Big\rangle_{\overline{K}} = \sum_{k=1}^\infty \nu_k^{-1}g_k^2 = \sum_{k=1}^\infty (1 + \gamma_k^{-1})g_k^2.
\end{equation}
Similarly, since $\inprod{(\Sigma_F \otimes I)\Omega_k, \Omega_j}_{L^2(D_\cX \times D_\cY)} = \delta_{kj}$ we have
\begin{equation}
    \inprod{(\Sigma_F \otimes I)G, G}_{L^2(D_\cX \times D_\cY)} = \Big\langle\sum_{k=1}^\infty g_k(\Sigma_F \otimes I)\Omega_k, \sum_{j=1}^\infty g_j\Omega_j\Big\rangle_{L^2(D_\cX \times D_\cY)} = \sum_{k=1}^\infty g_k^2.
\end{equation}
\end{proof}

\begin{proof}[Proposition~\ref{prop:simdiagsimple}]\\
For any $\{\varphi_j\}_{j=1}^\infty$ which is an orthonormal basis of $L^2(D_\cX)$, it is not hard to see that $\{\phi_i \otimes \varphi_j\}_{i, j = 1}^\infty$ is an eigenbasis of $\Sigma_F \otimes I$ where
\begin{equation}
    (\Sigma_F \otimes I)(\phi_i \otimes \varphi_j) = \mu_i\phi_i \otimes \varphi_j.
\end{equation}
By definition of the reproducing kernel $K$ for $\cG$, since $\spn\{\phi_i \otimes \varphi_j\}_{i, j = 1}^\infty$ are the eigenfunctions of $K$ then we must have that $\cG = \spn\{\phi_i \otimes \varphi_j\}_{i, j = 1}^\infty$. By the definition of the induced inner product, we know that for all $F, G \in \cG$,
\begin{equation}
    \inprod{F, G}_{\overline{K}} = \inprod{(\Sigma_F \otimes I)F, G}_{L^2(D_\cX \times D_\cY)} + \inprod{F, G}_{K}.
\end{equation}
Therefore, we can write out for all $1 \leq i, i', j, j' < \infty$,
\begin{equation}
    \inprod{\phi_i \otimes \varphi_j, \phi_{i'} \otimes \varphi_{j'}}_{\overline{K}} = \mu_i\delta_{ii'}\delta_{jj'} + \rho_{ij}^{-1}\delta_{ii'}\delta_{jj'} = (\mu_i + \rho_{ij}^{-1})\delta_{ii'}\delta_{jj'}.
\end{equation}
Since $\overline{K}$ is invertible over $\cG$ and $\inprod{\phi_i \otimes \varphi_j, \phi_{i'} \otimes \varphi_{j'}}_{\overline{K}} = \inprod{\overline{K}^{-1}(\phi_i \otimes \varphi_j), \phi_{i'} \otimes \varphi_{j'}}_{L^2(D_\cX \times D_\cY)}$ this implies that $\overline{K}$ has the eigendecomposition
\begin{equation}
    \overline{K}(x, y, \xi, \eta) = \sum_{i=1}^\infty\sum_{j=1}^\infty (\mu_{j} + \rho_{ij}^{-1})^{-1}\phi_i(x)\phi_i(\xi)\varphi_j(y)\varphi_j(\eta)
\end{equation}
and since the $\{\phi_i\}_{i=1}^\infty$ and $\{\varphi_j\}_{j=1}^\infty$ are orthonormal in $L^2(D_\cX)$ we have that
\begin{equation}
    \overline{K}(\phi_i \otimes \varphi_j) = (\mu_i + \rho_{ij}^{-1})^{-1}\phi_i \otimes \varphi_j
\end{equation}
which implies that
\begin{equation}
\begin{aligned}
    \overline{K}^\frac{1}{2}(\Sigma_F \otimes I)\overline{K}^\frac{1}{2}(\phi_i \otimes \varphi_j) &= (\mu_i + \rho_{ij}^{-1})^{-\frac{1}{2}}\overline{K}^\frac{1}{2}(\Sigma_F \otimes I)(\phi_i \otimes \varphi_j)\\
    &= \mu_i(\mu_i + \rho_{ij}^{-1})^{-\frac{1}{2}}\overline{K}^\frac{1}{2}(\phi_i \otimes \varphi_j)\\
    &= (1 + \mu_i^{-1}\rho_{ij}^{-1})^{-1}\phi_i \otimes \varphi_j.
\end{aligned}
\end{equation}
This proves that the eigenfunctions of $\overline{K}^\frac{1}{2}(\Sigma_F \otimes I)\overline{K}^\frac{1}{2}$ are $\Gamma_{ij} = \Psi_{ij} = \phi_i \otimes \varphi_j$ with eigenvalues $\nu_{ij} = (1 + \mu_i^{-1}\rho_{ij}^{-1})^{-1}$. Therefore, we have that $\gamma_{ij} = (\nu_{ij}^{-1} - 1)^{-1} = \mu_i\rho_{ij}$ and $\Omega_{ij} = \nu_{ij}^{-\frac{1}{2}}\overline{K}^\frac{1}{2}\Gamma_{ij} = \mu_i^{-\frac{1}{2}}\phi_i \otimes \varphi_j = \mu_i^{-\frac{1}{2}}\Psi_{ij}$.\\

Now reorder the $\rho_{ij}$ in decreasing order and enumerate them as $\rho_k$. Assume that these sorted decreasing eigenvalues satisfy $\rho_k \lesssim k^{-r}$ for some $r > \frac{1}{2}$ as in Assumption~\ref{assump:spectradecay}.

Let us reorder the coefficients $\gamma_{ij}$ and enumerate them as $\gamma_k$ in the same way as the $\rho_k$ eigenvalues. Since the covariance $\Sigma_F$ is a bounded operator then all of the eigenvalues $\mu_i$ are bounded. Finally, this proves that $\gamma_k \lesssim \rho_k \lesssim k^{-r}$ for some $r > \frac{1}{2}$.
\end{proof}

\newpage
\begin{proof}[Lemma~\ref{lem:stocherrbound}]
We would like to bound the term $\|\widehat{G}_{n, \lambda} - \overline{G}_{\infty, \lambda}\|_{\Sigma_F}$. Define the variational derivatives for any $G, P, H \in \cG$,
\begin{equation}
\begin{aligned}
    \delta\widehat{R}(G)[H] &= -\frac{2}{n}\sum_{i=1}^n \inprod{U_i - G(F_i), H(F_i)}_{L^2(D_\cY)}\\
    \delta R(G)[H] &= -2\mathbb{E}\Big[\inprod{U - G(F), H(F)}_{L^2(D_\cY)}\Big]\\
    \delta^2\widehat{R}[P, H] &= \frac{2}{n}\sum_{i=1}^n \inprod{P(F_i), H(F_i)}_{L^2(D_\cY)} = 2\inprod{(\widehat{\Sigma}_F \otimes I)P, H}_{L^2(D_\cX \times D_\cY)}\\
    \delta^2R[P, H] &= 2\mathbb{E}\Big[\inprod{P(F), H(F)}_{L^2(D_\cY)}\Big] = 2\inprod{(\Sigma_F \otimes I)P, H}_{L^2(D_\cX \times D_\cY)}.
\end{aligned}
\end{equation}
where $\widehat{\Sigma}_F = \frac{1}{n}\sum_{i=1}^n F_i \otimes F_i$ and $\Sigma_F = \mathbb{E}[F \otimes F]$. Remember that since $\cG$ equipped with $\|\cdot\|_{\overline{K}}$ is an RKHS, then $\|H\|_{L^2(D_\cX \times D_\cY)} \leq c\|H\|_{\overline{K}}$ for some universal constant $c > 0$. Viewing the variational derivatives above as functionals of $H \in \cG$, it is not hard to see that they are bounded in the $\|\cdot\|_{\overline{K}}$ norm. Hence, by the Riesz representation theorem there exist $\nabla\widehat{R}(G), \nabla R(G), \nabla^2\widehat{R}(P),$ and $\nabla^2R(P) \in \cG$ such that
\begin{equation}
\begin{aligned}
    \delta\widehat{R}(G)[H] &= \inprod{\nabla\widehat{R}(G), H}_{\overline{K}}, \quad \delta R(G)[H] = \inprod{\nabla R(G), H}_{\overline{K}}\\
    \delta^2\widehat{R}[P, H] &= \inprod{\nabla^2\widehat{R}(P), H}_{\overline{K}}, \quad \delta^2R[P, H] = \inprod{\nabla^2R(P), H}_{\overline{K}}.
\end{aligned}
\end{equation}
Denoting $\widehat{R}_\lambda(G) = \widehat{R}(G) + \lambda J(G)$ and $R_\lambda(G) = R(G) + \lambda J(G)$ we can similarly compute the first and second variational derivatives $\delta\widehat{R}_\lambda(G)[H], \delta R_\lambda(G)[H], \delta^2\widehat{R}_\lambda[P, H], \delta^2R_\lambda[P, H]$ and show that these functionals of $H \in \cG$ are bounded in norm $\|\cdot\|_{\overline{K}}$. Hence, there exist $\nabla\widehat{R}_\lambda(G), \delta R_\lambda(G), \nabla^2\widehat{R}_\lambda(P),$ and $\nabla^2R_\lambda(P) \in \cG$ such that
\begin{equation}
\begin{aligned}
    \delta\widehat{R}_\lambda(G)[H] &= \inprod{\nabla\widehat{R}_\lambda(G), H}_{\overline{K}}, \quad \delta R_\lambda(G)[H] = \inprod{\nabla R_\lambda(G), H}_{\overline{K}}\\
    \delta^2\widehat{R}_\lambda[P, H] &= \inprod{\nabla^2\widehat{R}_\lambda(P), H}_{\overline{K}}, \quad \delta^2R_\lambda[P, H] = \inprod{\nabla^2R_\lambda(P), H}_{\overline{K}}.
\end{aligned}
\end{equation}
We interpret the Hessians $\nabla^2\widehat{R}, \nabla^2R, \nabla^2\widehat{R}_\lambda,$ and $\nabla^2R_\lambda$ as maps from $\cG$ to $\cG$. Since $\inprod{\Omega_k, \Omega_j}_{\overline{K}} = \nu_k^{-1}\delta_{kj}$, this immediately implies for all $k \geq 1$ and $G \in \cG$ that
\begin{equation}
    \inprod{\nabla R(G), \Omega_k}_{\overline{K}} = \delta R(G)[\Omega_k] \implies \nabla R(G) = \sum_{k=1}^\infty \nu_k\delta R(G)[\Omega_k]\Omega_k
\end{equation}
and likewise for $\nabla\widehat{R}, \nabla\widehat{R}_\lambda, \nabla R_\lambda, \nabla^2\widehat{R}, \nabla^2R, \nabla^2\widehat{R}_\lambda, \nabla^2R_\lambda$. For all $G = \sum_{k=1}^\infty g_k\Omega_k \in \cG$ we can write out a series expansion for $\nabla^2R_\lambda$ as
\begin{equation}
    \nabla^2R_\lambda(G) = 2\sum_{k=1}^\infty \nu_k(1 + \lambda\gamma_k^{-1})g_k\Omega_k.
\end{equation}
It is not hard to check using the series expansions from Theorem~\ref{thm:simdiag} that indeed
\begin{equation}
    \inprod{\nabla^2R_\lambda(G), H}_{\overline{K}} = 2\inprod{G, H}_{\Sigma_F}^2 + 2\lambda\inprod{G, H}_{K} = \delta^2R_\lambda[G, H]
\end{equation}
for all $G, H \in \cG$. Finally, we can define the ``linearization" of $\widehat{G}_{n, \lambda}$ as $\tilde{G} \in \cG$ where
\begin{equation}
    \tilde{G} = \overline{G}_{\infty, \lambda} - \nabla^2R_\lambda^{-1}\Big(\nabla\widehat{R}_\lambda(\overline{G}_{\infty, \lambda})\Big), \quad \nabla^2R_\lambda^{-1}(G) = \frac{1}{2}\sum_{k=1}^\infty \nu_k^{-1}(1+\lambda\gamma_k^{-1})^{-1}g_k\Omega_k
\end{equation}
following the analysis of the rate of convergence of penalized likelihood estimators in~\citet[Section~1, Equation~1.13]{cox1990asymptotic}. Now decomposing
\begin{equation}
    \widehat{G}_{n, \lambda} - \overline{G}_{\infty, \lambda} = (\widehat{G}_{n, \lambda} - \tilde{G}) + (\tilde{G} - \overline{G}_{\infty, \lambda})
\end{equation}
we bound both terms on the right-hand side.\\

\noindent\underline{1. Bounding $\|\tilde{G} - \overline{G}_{\infty, \lambda}\|_{\Sigma_F}$}~\\

First noting that $\delta R_\lambda(\overline{G}_{\infty, \lambda}) = 0$ by definition then
\begin{equation}
    \delta\widehat{R}_\lambda(\overline{G}_{\infty, \lambda}) = \delta\widehat{R}_\lambda(\overline{G}_{\infty, \lambda}) - \delta R_\lambda(\overline{G}_{\infty, \lambda}) = \delta\widehat{R}(\overline{G}_{\infty, \lambda}) - \delta R(\overline{G}_{\infty, \lambda}).
\end{equation}
So for any $H \in \cG$ we can write
\begin{equation}
\begin{aligned}
    \Big(\delta\widehat{R}_\lambda(\overline{G}_{\infty, \lambda})[H]\Big)^2 &= \Big(\delta\widehat{R}(\overline{G}_{\infty, \lambda})[H] - \delta R(\overline{G}_{\infty, \lambda})[H]\Big)^2\\
    &= 4\Big(\frac{1}{n}\sum_{i=1}^n \inprod{U_i - \overline{G}_{\infty, \lambda}(F_i), H(F_i)}_{L^2(D_\cY)} - \mathbb{E}\Big[\inprod{U - \overline{G}_{\infty, \lambda}(F), H(F)}_{L^2(D_\cY)}\Big]\Big)^2.
\end{aligned}
\end{equation}
Using the fact that $U_i = \cT^*(F_i) + \epsilon_i$ we get by Jensen's and Young's inequality
\begin{equation}
\begin{aligned}
    &\Big(\delta\widehat{R}_\lambda(\overline{G}_{\infty, \lambda})[H]\Big)^2\\
    &\leq 4\Big(\frac{1}{n}\sum_{i=1}^n \inprod{(\overline{G}_{\infty, \lambda} - G_\cG)(F_i), H(F_i)}_{L^2(D_\cY)} - \mathbb{E}\Big[\inprod{(\overline{G}_{\infty, \lambda} - G_\cG)(F), H(F)}_{L^2(D_\cY)}\Big]\Big)^2\\
    &\qquad+ 4\Big(\frac{1}{n}\sum_{i=1}^n \inprod{G_\cG(F_i) - \cT^*(F_i), H(F_i)}_{L^2(D_\cY)} - \mathbb{E}\Big[\inprod{G_\cG(F) - \cT^*(F), H(F)}_{L^2(D_\cY)}\Big]\Big)^2\\
    &\qquad+ 4\Big(\frac{1}{n}\sum_{i=1}^n \inprod{\varepsilon_i, H(F_i)}_{L^2(D_\cY)} - \mathbb{E}[\inprod{\varepsilon, H(F)}_{L^2(D_\cY)}]\Big)^2\\
    &= 4\Big(\frac{1}{n}\sum_{i=1}^n \inprod{(\overline{G}_{\infty, \lambda} - G_\cG)(F_i), H(F_i)}_{L^2(D_\cY)} - \mathbb{E}\Big[\inprod{(\overline{G}_{\infty, \lambda} - G_\cG)(F), H(F)}_{L^2(D_\cY)}\Big]\Big)^2\\
    &\qquad+ 4\Big(\frac{1}{n}\sum_{i=1}^n \inprod{G_\cG(F_i) - \cT^*(F_i), H(F_i)}_{L^2(D_\cY)}\Big)^2 + 4\Big(\frac{1}{n}\sum_{i=1}^n \inprod{\varepsilon_i, H(F_i)}_{L^2(D_\cY)}\Big)^2
\end{aligned}
\end{equation}
where the second line uses the fact that $\mathbb{E}[\inprod{\varepsilon, H(F)}_{L^2(D_\cX)}] = 0$ by independence and $\mathbb{E}[\inprod{G_\cG(F) - \cT^*(F), H(F)}_{L^2(D_\cX)}] = \mathbb{E}[\inprod{G_\cG(F) - U, H(F)}_{L^2(D_\cX)}] + \mathbb{E}[\inprod{\varepsilon, H(F)}_{L^2(D_\cX)}] = 0$. Combining Lemma~\ref{lem:subgcomp} with Remark~\ref{rem:psi1}, the $\psi_1$-norm for the first term above is
\begin{equation}
\begin{aligned}
    \|\inprod{(\overline{G}_{\infty, \lambda} - G_\cG)(F), H(F)}_{L^2(D_\cY)}\|_{\psi_1} &\lesssim \|(\overline{G}_{\infty, \lambda} - G_\cG)(F)\|_{\psi_2, (\overline{G}_{\infty, \lambda} - G_\cG)\Gamma_F(\overline{G}_{\infty, \lambda} - G_\cG)^T}\|H(F)\|_{\psi_2, H\Gamma_FH^T}\\
    &\quad \cdot\Tr((\overline{G}_{\infty, \lambda} - G_\cG)\Gamma_F(\overline{G}_{\infty, \lambda} - G_\cG)^T)^\frac{1}{2}\Tr(H\Gamma_FH^T)^\frac{1}{2}\\
    &\leq \|F\|_{\psi_2, \Gamma_F}^2\|\overline{G}_{\infty, \lambda} - G_\cG\|_{\Gamma_F}\|H\|_{\Gamma_F} \lesssim \|\overline{G}_{\infty, \lambda} - G_\cG\|_{\Gamma_F}\|H\|_{\Gamma_F}
\end{aligned}
\end{equation}
and similarly for the third term above
\begin{equation}
\begin{aligned}
    \|\inprod{\epsilon, H(F)}_{L^2(D_\cX)}\|_{\psi_1} &\lesssim \|\varepsilon\|_{\psi_2, \Gamma_\varepsilon}\|H(F)\|_{\psi_2, H\Gamma_FH^T}\Tr(\Gamma_\varepsilon)^\frac{1}{2}\Tr(H\Gamma_FH^T)^\frac{1}{2}\\
    &\leq \|\varepsilon\|_{\psi_2, \Gamma_\varepsilon}\|F\|_{\psi_2, \Gamma_F}\Tr(\Gamma_\varepsilon)^\frac{1}{2}\|H\|_{\Gamma_F} \lesssim \|H\|_{\Gamma_F}.
\end{aligned}
\end{equation}
For the second term above we apply Remark~\ref{rem:psi1}. Using Assumption~\ref{assump:linear_growth} we can let $M' = \|G_\cG\|_\text{op} + M$ such that,
\begin{equation}
\begin{aligned}
    \|\inprod{G_\cG(F) - \cT^*(F), H(F)}_{L^2(D_\cY)}\|_{\psi_1} &\leq \Big\|\|G_\cG(F) - \cT^*(F)\|_{L^2(D_\cY)}^2\Big\|_{\psi_1}^\frac{1}{2}\Big\|\|H(F)\|_{L^2(D_\cY)}^2\Big\|_{\psi_1}^\frac{1}{2}\\
    &\leq \Big\|M'\|F\|_{L^2(D_\cX)} + c\Big\|_{\psi_2}\Big\|\|H(F)\|_{L^2(D_\cY)}^2\Big\|_{\psi_1}^\frac{1}{2}\\
    &\leq \Big(M'\Big\|\|F\|_{L^2(D_\cX)}^2\Big\|_{\psi_1}^\frac{1}{2} + c'\Big)\Big\|\|H(F)\|_{L^2(D_\cY)}^2\Big\|_{\psi_1}^\frac{1}{2}
\end{aligned}
\end{equation}
and by an application of~\citet[Appendix~A.2, Lemma~A.4]{chen2018hanson} we get that
\begin{equation}
\begin{aligned}
    \|\inprod{G_\cG(F) - \cT^*(F), H(F)}_{L^2(D_\cX)}\|_{\psi_1} &\leq \Big(M'\|F\|_{\psi_2, \Gamma_F}\Tr(\Gamma_F)^\frac{1}{2} + c'\Big)\|H(F)\|_{\psi_2, H\Gamma_FH^T}\Tr(H\Gamma_FH^T)^\frac{1}{2}\\
    &\leq \Big(M'\|F\|_{\psi_2, \Gamma_F}\Tr(\Gamma_F)^\frac{1}{2} + c'\Big)\|F\|_{\psi_2, \Gamma_F}\|H\|_{\Gamma_F}\\
    &\lesssim \max(1, \|G_\cG\|_\text{op} + M)\|H\|_{\Gamma_F}\\
    &\lesssim \max(1, \|G_\cG\|_\text{op})\|H\|_{\Gamma_F}.
\end{aligned}
\end{equation}
Hence by Bernstein's inequality,
\begin{equation}\label{eq:bernstein}
\begin{split}
    &\Big|\frac{1}{n}\sum_{i=1}^n \inprod{(\overline{G}_{\infty, \lambda} - G_\cG)(F_i), H(F_i)}_{L^2(D_\cY)} - \mathbb{E}\Big[\inprod{(\overline{G}_{\infty, \lambda} - G_\cG)(F), H(F)}_{L^2(D_\cY)}\Big]\Big|\\
    &\qquad\qquad\lesssim \|\overline{G}_{\infty, \lambda} - G_\cG\|_{\Gamma_F}\|H\|_{\Gamma_F}\Big(\sqrt{\frac{\log(1/\delta)}{n}} \vee \frac{\log(1/\delta)}{n}\Big)\\
    &\Big|\frac{1}{n}\sum_{i=1}^n \inprod{G_\cG(F_i) - \cT^*(F_i), H(F_i)}_{L^2(D_\cY)}\Big| \lesssim \max(1, \|G_\cG\|_\text{op})\|H\|_{\Gamma_F}\Big(\sqrt{\frac{\log(1/\delta)}{n}} \vee \frac{\log(1/\delta)}{n}\Big)\\
    &\Big|\frac{1}{n}\sum_{i=1}^n \inprod{\varepsilon_i, H(F_i)}_{L^2(D_\cY)}\Big| \lesssim \|H\|_{\Gamma_F}\Big(\sqrt{\frac{\log(1/\delta)}{n}} \vee \frac{\log(1/\delta)}{n}\Big)
\end{split}
\end{equation}
with probability at least $1 - \delta$. Note that we can write the first bound above more generally for all $G, H \in \cG$ as
\begin{equation}\label{eq:covconc}
\begin{split}
    &\Big|\inprod{((\widehat{\Sigma}_F - \Sigma_F) \otimes I)G, H}_{L^2(D_\cX \times D_\cY)}\Big| = \Big|\frac{1}{n}\sum_{i=1}^n \inprod{G(F_i), H(F_i)}_{L^2(D_\cY)} - \mathbb{E}\Big[\inprod{G(F), H(F)}_{L^2(D_\cY)}\Big]\Big|\\
    &\qquad\qquad\lesssim \|G\|_{\Gamma_F}\|H\|_{\Gamma_F}\Big(\sqrt{\frac{\log(1/\delta)}{n}} \vee \frac{\log(1/\delta)}{n}\Big)\\
    &\qquad\qquad\lesssim \|G\|_{\Sigma_F}\|H\|_{\Sigma_F}\Big(\sqrt{\frac{\log(1/\delta)}{n}} \vee \frac{\log(1/\delta)}{n}\Big)
\end{split}
\end{equation}
where the last line follows since $\|G\|_{\Gamma_F} \lesssim \|G\|_{\Sigma_F}$ by Assumption~\ref{assump:strictsubg}.

Finally, since $\|\overline{G}_{\infty, \lambda} - G_\cG\|_{\Sigma_F}^2 \leq \lambda J(G_\cG)$ by the determinstic error derivations we conclude from~\eqref{eq:bernstein} that uniformly over all $H \in \cG$,
\begin{equation}
\begin{aligned}
    \Big(\delta\widehat{R}_\lambda(\overline{G}_{\infty, \lambda})[H]\Big)^2 &\leq \max\Big(1, \|G_\cG\|_\text{op}, \lambda J(G_\cG)\Big)\frac{\log(1/\delta)}{n}\|H\|_{\Gamma_F}^2\\
    &\leq \max\Big(1, \|G_\cG\|_\text{op}, \lambda J(G_\cG)\Big)\frac{\log(1/\delta)}{n}\|H\|_{\Sigma_F}^2
\end{aligned}
\end{equation}
with probability $1 - \delta$ where the last inequality again follows from Assumption~\ref{assump:strictsubg}. For shorthand, let us define $\kappa(G_\cG) = \max\Big(1, \|G_\cG\|_\text{op}, \lambda J(G_\cG)\Big)$. Now we can write
\begin{equation}
\begin{aligned}
    \|\tilde{G} - \overline{G}_{\infty, \lambda}\|_{\Sigma_F}^2 &= \Big\|\nabla^2R_\lambda^{-1}\Big(\nabla\widehat{R}_\lambda(\overline{G}_{\infty, \lambda})\Big)\Big\|_{\Sigma_F}^2 = \frac{1}{4}\Big\|\sum_{k=1}^\infty \nu_k^{-1}(1 + \lambda\gamma_k^{-1})^{-1}\Big(\nu_k\delta\widehat{R}_\lambda(\overline{G}_{\infty, \lambda})[\Omega_k]\Omega_k\Big)\Big\|_{\Sigma_F}^2\\
    &= \frac{1}{4}\sum_{k=1}^\infty (1 + \lambda\gamma_k^{-1})^{-2}\Big(\delta\widehat{R}_\lambda(\overline{G}_{\infty, \lambda})[\Omega_k]\Big)^2\\
    &\lesssim \kappa(G_\cG)\frac{\log(1/\delta)}{n}\sum_{k=1}^\infty (1 + \lambda\gamma_k^{-1})^{-2}\|\Omega_k\|_{\Sigma_F}^2\\
    &= \kappa(G_\cG)\frac{\log(1/\delta)}{n}\sum_{k=1}^\infty (1 + \lambda\gamma_k^{-1})^{-2}\\
    &\lesssim \kappa(G_\cG)\frac{\log(1/\delta)}{n}\sum_{k=1}^\infty (1 + \lambda k^r)^{-2}\\
    &\asymp \kappa(G_\cG)\frac{\log(1/\delta)}{n}\int_1^\infty (1 + \lambda x^r)^{-2}dx\\
    &= \kappa(G_\cG)\frac{\log(1/\delta)}{n}\lambda^{-\frac{1}{r}}\int_{\lambda^\frac{1}{r}}^\infty (1 + x^r)^{-2}dx
\end{aligned}
\end{equation}
Noting that $\int_{\lambda^\frac{1}{r}}^\infty (1 + x^r)^{-2}dx \leq \int_{\lambda^\frac{1}{r}}^1 (1 + x^r)^{-2}dx + \int_1^\infty x^{-2r}dx \leq 1 + \frac{1}{2r - 1} - \lambda^\frac{1}{r}$ for all $r > \frac{1}{2}$ we have that
\begin{equation}\label{eq:stochbound1}
    \|\tilde{G} - \overline{G}_{\infty, \lambda}\|_{\Sigma_F}^2 \lesssim \kappa(G_\cG)\frac{\log(1/\delta)}{n}\lambda^{-\frac{1}{r}}.
\end{equation}~\\

\noindent\underline{2. Bounding $\|\widehat{G}_{n, \lambda} - \tilde{G}\|_{\Sigma_F}$}~\\
Now we move on to bounding $\|\widehat{G}_{n, \lambda} - \tilde{G}\|_{\Sigma_F}$. First clearly $\nabla\widehat{R}_\lambda(\widehat{G}_{n, \lambda}) = 0$ by first-order optimality. Since $\widehat{R}_\lambda(G)$ is quadratic then we can in fact write a Taylor series expansion 
\begin{equation}
    \nabla\widehat{R}_\lambda(\widehat{G}_{n, \lambda}) = \nabla\widehat{R}_\lambda(\overline{G}_{\infty, \lambda}) + \nabla^2\widehat{R}_\lambda(\widehat{G}_{n, \lambda} - \overline{G}_{\infty, \lambda}) = 0.
\end{equation}

Also, since $\tilde{G} = \overline{G}_{\infty, \lambda} - \nabla^2R_\lambda^{-1}\Big(\nabla\widehat{R}_\lambda(\overline{G}_{\infty, \lambda})\Big)$ then
\begin{equation}
\nabla^2R_\lambda(\tilde{G} - \overline{G}_{\infty, \lambda}) = -\nabla\widehat{R}_\lambda(\overline{G}_{\infty, \lambda}).
\end{equation}
To conclude, we know that
\begin{equation}
    \nabla\widehat{R}_\lambda(\overline{G}_{\infty, \lambda}) = \nabla^2R_\lambda(\overline{G}_{\infty, \lambda} - \tilde{G}) = \nabla^2\widehat{R}_\lambda(\overline{G}_{\infty, \lambda} - \widehat{G}_{n, \lambda}).
\end{equation}
Hence we can write
\begin{equation}
\begin{aligned}
    \nabla^2R_\lambda(\widehat{G}_{n, \lambda} - \tilde{G}) &= \nabla^2R_\lambda(\widehat{G}_{n, \lambda} - \overline{G}_{\infty, \lambda}) + \nabla^2R_\lambda(\overline{G}_{\infty, \lambda} - \tilde{G})\\
    &= \nabla^2R_\lambda(\widehat{G}_{n, \lambda} - \overline{G}_{\infty, \lambda}) - \nabla^2\widehat{R}_\lambda(\widehat{G}_{n, \lambda} - \overline{G}_{\infty, \lambda})\\
    &= \nabla^2R(\widehat{G}_{n, \lambda} - \overline{G}_{\infty, \lambda}) - \nabla^2\widehat{R}(\widehat{G}_{n, \lambda} - \overline{G}_{\infty, \lambda})
\end{aligned}
\end{equation}
which proves that
\begin{equation}
    \widehat{G}_{n, \lambda} - \tilde{G} = \nabla^2R_\lambda^{-1}\Big(\nabla^2R(\widehat{G}_{n, \lambda} - \overline{G}_{\infty, \lambda}) - \nabla^2\widehat{R}(\widehat{G}_{n, \lambda} - \overline{G}_{\infty, \lambda})\Big).
\end{equation}
Now denoting $\widehat{G}_{n, \lambda} = \sum_{k=1}^\infty \widehat{b}_k\Omega_k$ and $\overline{G}_{\infty, \lambda} = \sum_{k=1}^\infty \overline{b}_k\Omega_k$ we get by Cauchy--Schwarz and~\eqref{eq:covconc} that
\begin{equation}
\begin{aligned}
    \|\widehat{G}_{n, \lambda} &- \tilde{G}\|_{\Sigma_F}^2\\
    &= \frac{1}{4}\Big\|\sum_{k=1}^\infty \nu_k^{-1}(1 + \lambda\gamma_k^{-1})^{-1}\Big(\nu_k\delta\widehat{R}^2(\widehat{G}_{n, \lambda} - \overline{G}_{\infty, \lambda})[\Omega_k]\Omega_k - \nu_k\delta R^2(\widehat{G}_{n, \lambda} - \overline{G}_{\infty, \lambda})[\Omega_k]\Omega_k\Big)\Big\|_{\Sigma_F}^2\\
    &= \frac{1}{4}\sum_{k=1}^\infty (1 + \lambda\gamma_k^{-1})^{-2}\Big(\sum_{j=1}^\infty (\widehat{b}_j - \overline{b}_j) \inprod{((\widehat{\Sigma}_F - \Sigma_F) \otimes I)\Omega_j, \Omega_k}_{L^2(D_\cX \times D_\cY)}\Big)^2\\
    &\lesssim \frac{\log(1/\delta)}{n}\sum_{k=1}^\infty (1 + \lambda\gamma_k^{-1})^{-2}\sum_{j=1}^\infty (\widehat{b}_j - \overline{b}_j)^2\|\Omega_j\|_{\Sigma_F}^2\|\Omega_k\|_{\Sigma_F}^2\\
    &\leq \frac{\log(1/\delta)}{n}\sum_{k=1}^\infty(1 + \lambda\gamma_k^{-1})^{-2}\sum_{j=1}^\infty (\widehat{b}_j - \overline{b}_j)^2\\
    &\leq \frac{\log(1/\delta)}{n}\lambda^{-\frac{1}{r}}\|\widehat{G}_{n, \lambda} - \overline{G}_{\infty, \lambda}\|_{\Sigma_F}^2
\end{aligned}
\end{equation}
for all $r > \frac{1}{2}$ with probability at least $1 - \delta$.

\noindent\underline{3. Combining both bounds}~\\
By the triangle inequality we know that
\begin{equation}
    \|\tilde{G} - \overline{G}_{\infty, \lambda}\|_{\Sigma_F} \geq \|\widehat{G}_{n, \lambda} - \overline{G}_{\infty, \lambda}\|_{\Sigma_F} - \|\widehat{G}_{n, \lambda} - \tilde{G}\|_{\Sigma_F} \geq \Big(1 - C\lambda^{-\frac{1}{2r}}\sqrt{\frac{\log(1/\delta)}{n}}\Big)\|\widehat{G}_{n, \lambda} - \overline{G}_{\infty, \lambda}\|_{\Sigma_F}
\end{equation}
for some absolute constant $C > 0$. Hence by~\eqref{eq:stochbound1}, if $\frac{\log(1/\delta)}{n}\lambda^{-\frac{1}{r}} \lesssim 1$ then for sufficiently large $n$,
\begin{equation}
    \|\widehat{G}_{n, \lambda} - \overline{G}_{\infty, \lambda}\|_{\Sigma_F} \leq \Big(1 - C\lambda^{-\frac{1}{2r}}\sqrt{\frac{\log(1/\delta)}{n}}\Big)^{-1}\|\tilde{G} - \overline{G}_{\infty, \lambda}\|_{\Sigma_F} \lesssim \sqrt{\kappa(G_\cG)}\lambda^{-\frac{1}{2r}}\sqrt{\frac{\log(1/\delta)}{n}}.
\end{equation}
where $\kappa(G_\cG) = \max\Big(1, \|G_\cG\|_\text{op}, \lambda J(G_\cG)\Big)$. So finally, squaring both sides proves that
\begin{equation}\label{eq:stochbound2}
    \|\widehat{G}_{n, \lambda} - \overline{G}_{\infty, \lambda}\|_{\Sigma_F}^2 \lesssim \max\Big(1, \|G_\cG\|_\text{op}, \lambda J(G_\cG)\Big)\frac{\log(1/\delta)}{n}\lambda^{-\frac{1}{r}}
\end{equation}
with probability at least $1 - \delta$.
\end{proof}

\newpage
\section{Enforcing Symmetries \& Invariances in RKHSs}\label{app:rkhs_constraints}
Here we described how to transform an RKHS so that functions in this Hilbert space satisfy constraints such as coordinate symmetries, time causality, and time invariance.

\subsection{Coordinate Symmetries}\label{app:coord_symm}
Suppose we have an RKHS of functions $\cH \in L^2(D \times D)$ and we would like to transform this space such that every function $f \in \cH$ is symmetric in its coordinates
\begin{equation}
    f(x, y) = f(y, x), \quad \forall x, y \in D.
\end{equation}
We assume here that our RKHS $\cH$ has a continuous, square-integrable, and positive semidefinite kernel $K(D^2 \times D^2) \to \mathbb{R}$ that satisfies the symmetry property
\begin{equation}
    K(x, y, \xi, \eta) = K(y, x, \eta, \xi), \quad \forall x, y, \xi, \eta \in D.
\end{equation}

As shorthand, for any $f \in L^2(D \times D)$ we define $f^T \in L^2(D \times D)$ given by $f^T(x, y) = f(y, x)$. Now we can state the following theorem.
\begin{theorem}\label{thm:symmrk}
    If the kernel $K: D^4 \to \mathbb{R}$ of $\cH$ satisfies the symmetry property $K(x, y, \xi, \eta) = K(y, x, \eta, \xi)$ then we know that $f^T \in \cH$ for any $f \in \cH$. Furthermore, we can define the symmetrized RKHS
    \begin{equation}
        \cS := \Big\{\frac{f + f^T}{2} : f \in \cH\Big\} = \Big\{f = f^T : f \in \cH\Big\}
    \end{equation}
    with inner product inherited from $\cH$ whose reproducing kernel takes the form
    \begin{equation}
        K_\text{symm}(x, y, \xi, \eta) = \frac{1}{4}\Big[K(x, y, \xi, \eta) + K(x, y, \eta, \xi) + K(y, x, \xi, \eta) + K(y, x, \eta, \xi)\Big].
    \end{equation}
\end{theorem}
We give the proof of this result below.

\begin{proof}
    We assume that $K$ is continuous, square integrable, and positive semidefinite so it is a Mercer kernel with the decomposition
    \begin{equation}
        K(x, y, \xi, \eta) = \sum_{k=1}^\infty \lambda_k\psi_k(x, y)\psi_k(\xi, \eta)
    \end{equation}
    where $\lambda_k$ are the eigenvalues and $\psi_k \in L^2(D_\cX \times D_\cY)$ are the $L^2$ orthonormal eigenfunctions.
    
    First we prove that if $f \in \cH$ then $f^T \in \cH$. A consequence of Mercer's theorem is that $\cH$ can be characterized as
    \begin{equation}
        \cH = \Big\{f \in L^2(D \times D)\Big| \sum_{k=1}^\infty \frac{\inprod{f, \psi_k}_{L^2(D \times D)}^2}{\lambda_k}\Big\}.
    \end{equation}
    Since $K$ satisfies the symmetry $K(x, y, \xi, \eta) = K(y, x, \eta, \xi)$, it is immediate that
    \begin{equation}
        K(x, y, \xi, \eta) = \sum_{k=1}^\infty \lambda_k\psi_k(y, x)\psi_k(\eta, \xi)
    \end{equation}
    so we can also write
    \begin{equation}
        \cH = \Big\{f \in L^2(D \times D)\Big| \sum_{k=1}^\infty \frac{\inprod{f, \psi_k^T}_{L^2(D \times D)}^2}{\lambda_k}\Big\}.
    \end{equation}
    For any $f \in \cH$ we know that
    \begin{equation}
        \sum_{k=1}^\infty \frac{\inprod{f, \psi_k}_{L^2(D \times D)}^2}{\lambda_k} < \infty
    \end{equation}
    which implies that
    \begin{equation}
        \sum_{k=1}^\infty \frac{\inprod{f^T, \psi_k^T}_{L^2(D \times D)}^2}{\lambda_k} < \infty
    \end{equation}
    proving that $f^T \in \cH$.
    
    Now define the symmetrized set of functions
    \begin{equation}
        \cS := \Big\{\frac{f + f^T}{2} : f \in \cH\Big\} \subseteq \cH.
    \end{equation}
    Let's prove that $\cS$ as a set of functions is equal to
    \begin{equation}
        \cS' = \Big\{f = f^T : f \in \cH\Big\}.
    \end{equation}
    This follows immediately since for any $f \in S$ we can check that $f \in \cH$ and $f = f^T$ so $f \in \cS'$. In the other direction, for any $f \in \cS'$ by definition $f \in \cH$ and $f = f^T$ so $\frac{f + f^T}{2} = f$ implying that $f \in \cS$. Since $\cS = \cS'$ is a closed subset of $\cH$ we can naturally equip it with the inner product from $\cH$, proving that it is a Hilbert space.
    
    Finally, we need to show that $\cS$ is an RKHS with kernel
    \begin{equation}
        K_\text{symm}(x, y, \xi, \eta) = \frac{1}{4}\Big[K(x, y, \xi, \eta) + K(x, y, \eta, \xi) + K(y, x, \xi, \eta) + K(y, x, \eta, \xi)\Big].
    \end{equation}
    Remember that we can interpret the reproducing kernels $K$ and $K_\text{symm}$ as maps $K: \cH \to \cH$ and $K_\text{symm}: \cS \to \cS$ by
    \begin{equation}
    \begin{gathered}
        K(f)(x, y) = \int_D\int_D K(x, y, \xi, \eta)f(\xi, \eta) \ud\xi\ud\eta\\ K_\text{symm}(f)(x, y) = \int_D\int_D K_\text{symm}(x, y, \xi, \eta)f(\xi, \eta) \ud\xi\ud\eta.
    \end{gathered}
    \end{equation}
    First note that $K_\text{symm}$ is continuous, square integrable, symmetric, and positive semidefinite. This last condition can be checked by noting that $K_\text{symm}(f) = K(f)$ for all $f \in \cS$ which implies that $K_\text{symm}$ inherits the positive semidefiniteness of $\cH$. Furthermore, ${K_\text{symm}}_{(x_0, y_0)} \in \cS$ for all $x_0, y_0 \in D$ since $K_{(x_0, y_0)}, K_{(y_0, x_0)} \in \cH$. Here the notation $K_{(x_0, y_0)}$ means we are centering the kernel at a point $(x_0, y_0) \in D \times D$ to get a function $K(x, y, x_0, y_0)$.
    
    We conclude by showing that $K_\text{symm}$ satisfies the reproducing property on $\cS$. First note that the symmetry property $K(x, y, \xi, \eta) = K(y, x, \eta, \xi)$ gives us that $K(\cS) \subseteq \cS$ and $K(\cA) \subseteq K(\cA)$ for the space of antisymmetric functions
    \begin{equation}
        \cA := \Big\{\frac{f - f^T}{2}: f \in \cH\Big\} = \Big\{f = -f^T: f \in \cH\Big\}.
    \end{equation}
    This proves that $K(\cS) = \cS$ and $K(\cA) = \cA$. Hence, for any $f \in \cS$ we know that $K^{-1}(f) \in \cS$ so
    \begin{equation}
    \begin{aligned}
        \inprod{{K_\text{symm}}_{(x, y)}, f}_\cS &:= \inprod{{K_\text{symm}}_{(x, y)}, f}_\cH = \inprod{{K_\text{symm}}_{(x, y)}, K^{-1}(f)}_{L^2(D \times D)}\\
        &= \inprod{{K_\text{symm}}_{(x, y)}, K^{-1}(f)}_{L^2(D \times D)} = K_\text{symm}(K^{-1}(f))(x, y)\\
        &= K(K^{-1}(f))(x, y) = f(x, y)
    \end{aligned}
    \end{equation}
    where the equality from the second to the third line follows since $K_\text{symm} = K$ on $\cS$. This proves that $K_\text{symm}: (D \times D)^2 \to \mathbb{R}$ is the reproducing kernel for $\cS$ with the inner product inherited from $\cH$.
\end{proof}

Theorem~\ref{thm:symmrk} can naturally be extended to coordinate symmetries (permutations) of $n$ variables by iterating the statement over pairs of coordinates at a time.

\subsection{Flip ($\mathbb{Z}_2$) Symmetries}\label{app:flip_symm}
Another important class of symmetries are flips along a coordinate axis. Suppose we have an RKHS of functions $\cH \in L^2([-a, a])$ and we would like to transform it so that every $f \in \cH$ satisfies
\begin{equation}
    f(x) = f(-x), \quad \forall x \in [-a, a].
\end{equation}
where $a > 0$ can be finite or infinite.

Let's assume that our initial RKHS $\cH$ has a continuous, square-integrable, and positive semidefinite kernel $K([-a, a]^2) \to \mathbb{R}$ that satisfies the symmetry property
\begin{equation}
    K(x, \xi) = K(-x, -\xi), \quad \forall x, \xi \in [-a, a].
\end{equation}
Through a nearly identical argument to the case of coordinate symmetries, we can show the following result. As shorthand, for any $f \in L^2([-a, a])$ we define $f^- \in L^2(-[a, a])$ given by $f^-(x) = f(-x)$.
\begin{theorem}\label{thm:fliprk}
    If the kernel $K: [-a, a]^2 \to \mathbb{R}$ of $\cH$ satisfies the symmetry property $K(x, \xi) = K(-x, -\xi)$ then we know that $f^- \in \cH$ for any $f \in \cH$. Furthermore, we can define the symmetrized RKHS
    \begin{equation}
        \cF := \Big\{\frac{f + f^-}{2} : f \in \cH\Big\} = \Big\{f = f^- : f \in \cH\Big\}
    \end{equation}
    with inner product inherited from $\cH$ whose reproducing kernel takes the form
    \begin{equation}
        K_\text{flip}(x, \xi) = \frac{1}{4}\Big[K(x, \xi) + K(x, -\xi) + K(-x, \xi) + K(-x, -\xi)\Big].
    \end{equation}
\end{theorem}

\subsection{Enforcing Time Causality}\label{app:time_causality}
Here we discuss how to transform an RKHS so that functions in this space satisfy a time constraint known as causality. We begin with an RKHS of functions $\cH \in L^2([a, b] \times [a, b])$ where $[a, b]$ denotes an interval of time and may generally have open, closed, finite, or infinite endpoints. We aim to transform our RKHS so that for every $f \in \cH$ we have that
\begin{equation}
    f(s, t) = \mathbf{1}_{t \geq s}f(s, t).
\end{equation}
The new function $\mathbf{1}_{t \geq s}f(s, t)$ is time causal because taking an input signal $p(s)$ and integrating it against the $s$-coordinate
\begin{equation}
    q(t) = \int_a^b \mathbf{1}_{t \geq s}f(s, t)p(s) \ud s
\end{equation}
generates an output signal $q(t)$ where the influence of the output at time $t$ only depends on the perturbation $p(s)$ at previous times $s \leq t$. Hence, the causal order of time is respected by the linear filter $\mathbf{1}_{t \geq s}f(s, t)$.

Multiplication of functions in an RKHS by a fixed positive weighting function is a standard procedure described in~\citet[Section~2.3.4, Corollary~2.5]{saitoh2016theory}. The difference here is that $\mathbf{1}_{t \geq s}$ is not strictly positive at all points $(s, t)$. Hence, applying this weighting to all functions in $\cH$ does not automatically produce a new RKHS that inherits the original inner product of $\cH$.

We proceed to construct this RKHS in the following way. Using Theorem~\ref{thm:symmrk} we symmetrize $\cH$ in the time coordinate to obtain the RKHS $\cS$ whose elements satisfy $f(s, t) = f(t, s)$. In order to do this, we implicitly assume that our kernel $K:[a, b]^4 \to \mathbb{R}$ for the RKHS $\cH$ satisfies the property $K(s, t, \sigma, \tau) = K(t, s, \tau, \sigma)$. Then we know that $\cS \subseteq \cH$ is an RKHS with inner product inherited from $\cH$ and reproducing kernel
\begin{equation}
\begin{split}
    K_\text{symm}(t, s, \tau, \sigma) &= \frac{1}{4}\Big[K(s, t, \sigma, \tau) + K(s, t, \tau, \sigma) + K(t, s, \sigma, \tau) + K(t, s, \tau, \sigma)\Big].
\end{split}
\end{equation}

Now define the causal RKHS
\begin{equation}
    \cH_\text{causal} := \Big\{\mathbf{1}_{\{t \geq s\}}f: f \in \cS\Big\}
\end{equation}
with the inherited inner product
\begin{equation}
    \inprod{\overline{f}, \overline{g}}_{\cH_\text{causal}} := \Big\langle f, g\Big\rangle_\cS.
\end{equation}
for all $\overline{f} = \mathbf{1}_{\{t \geq s\}}f$ and $\overline{g} = \mathbf{1}_{\{t \geq s\}}g \in \cH_\text{causal}$ where $f, g \in \cS$. Equipping $\cH_\text{causal}$ with this inner product implies that it is isometrically isomorphic to the time-symmetrized RKHS $\cS$.

It is not hard to check that $\cH_\text{causal}$ has the reproducing kernel
\begin{equation}
    K_\text{causal} = \mathbf{1}_{\{t \geq s\}}\mathbf{1}_{\{\tau \geq \sigma\}}K_\text{symm}
\end{equation}
since it is symmetric, positive semidefinite, and it satisfies the reproducing property
\begin{equation}
\begin{split}
    \inprod{{K_\text{causal}}_{(s, t)}, \overline{f}}_{\cH_\text{causal}} &= \mathbf{1}_{\{t \geq s\}}\inprod{{K_\text{symm}}_{(s, t)}, f}_{\cS}\\
    &= \mathbf{1}_{\{t \geq s\}}f(s, t) = \overline{f}(s, t)
\end{split}
\end{equation}
for any $\overline{f} \in \cH_\text{causal}$ where $\overline{f} = \mathbf{1}_{\{t \geq s\}}f$ for some $f \in \cS$. Here again the notation ${K_\text{symm}}_{(s, t)}$ denotes the kernel centered at a given point $K_\text{symm}(\cdot, \cdot, s, t)$.

In summary, to create an RKHS of causal functions from some initial RKHS $\cH$, we first symmetrize it in time to obtain the RKHS $\cS \subseteq \cH$ and then for each $f \in \cS$ we set $f(s, t) = 0$ for all $s > t$ to obtain the RKHS $\cH_\text{causal}$ along with the formula for its reproducing kernel $K_\text{causal}$.\\

In exactly the same way, we can define the space of \textit{anticausal functions}
\begin{equation}
    \cH_\text{anticausal} := \Big\{\mathbf{1}_{\{t \geq s\}}f: f \in \cS\Big\}
\end{equation}
where functions in this space satisfy
\begin{equation}
    f(s, t) = 0 \quad \text{for} \quad t > s.
\end{equation}
Such functions are less common but occur for example in systems with prescribed terminal conditions at time $t = T$.

\subsection{Enforcing Time Invariance}\label{app:time_invariance}
Another important constraint we may want functions in our RKHS to satisfy is invariance to time. A function $f(s, t)$ for $s, t \in [a, b]$ is time-invariant if it can be rewritten as the difference of its two coordinates
\begin{equation}
    f(s, t) = f(t - s).
\end{equation}
To build such a space of functions, we start with an RKHS of one-dimensional functions $\mathfrak{h} \subset L^2([-(b-a), (b-a)])$ with kernel $k: [-(b-a), (b-a)]^2 \to \mathbb{R}$. We then lift this RKHS to the space of two-dimensional functions
\begin{equation}
    \cC(\mathfrak{h}) := \Big\{f: [a, b] \times [a, b] \to \mathbb{R},\ f(s, t) = h(t - s), \ \forall h \in \mathfrak{h}\Big\}
\end{equation}
equipped with the inherited inner product for all $f_1, f_2 \in \cC(\mathfrak{h})$,
\begin{equation}
    \inprod{f_1, f_2}_{\cC(\mathfrak{h})} = \inprod{h_1, h_2}_\mathfrak{h}
\end{equation}
where $f_1(s, t) = h_1(t - s)$ and $f_2(s, t) = h_2(t - s)$ with $h_1, h_2 \in \mathfrak{h}$. It is easy to see that $\cC(\mathfrak{h}) \subset L^2([a, b]^2)$
is isometrically isomorphic to $\mathfrak{h}$ and by properties of the reproducing kernel, we can check that $K: [a, b]^4 \to \mathbb{R}$ where
\begin{equation}
    K(s, t, \sigma, \tau) = k(t - s, \tau - \sigma)
\end{equation}
is the reproducing kernel of $\cC(\mathfrak{h})$.

Computationally, time-invariant constraints are useful as they reduce the amount of information stored in a function (i.e. reduce it from a function of two variables $s, t$ to a function of one variable $t - s$). As discussed in Section~\ref{sec:implementation} our standard way of constructing a function $f(s, t) \in \cC(\mathfrak{h})$ is to take equally-spaced grid points $s_j = t_j = \frac{b-a}{m-1}(j-1) + a$ for $j = 1, \dots, m$ and a matrix of weights $W \in \mathbb{R}^{m \times m}$ where
\begin{equation}
    f(s, t) = \sum_{i=1}^m\sum_{j=1}^m K(s, t, s_i, t_j)W_{ij}.
\end{equation}
Using the convolutional form of $K$ we can further write
\begin{equation}
\begin{aligned}
    f(s, t) = \sum_{i=1}^m\sum_{j=1}^m k(t - s, t_j - s_i)W_{ij} &= \sum_{i=1}^m\sum_{j=1}^m k\Big(t - s, \frac{b-a}{m-1}(j-i)\Big)W_{ij}\\
    &= \sum_{i=-(m-1)}^{m-1} k\Big(t - s, \frac{b-a}{m-1}i\Big)w_i.
\end{aligned}
\end{equation}
where $w \in \mathbb{R}^{2m-1}$ is a new set of weights. Thus, making the physically relevant assumption that $f$ lies in an RKHS of convolutional operators can significantly decrease computation time and memory if optimized as in the equation above.

\newpage

\vskip 0.2in
\bibliography{bibliography}

\end{document}